\theoremstyle{plain}
\newtheorem{theorem}{Theorem}[section]
\newtheorem{lemma}[theorem]{Lemma}
\theoremstyle{definition}
\DeclareMathOperator*{\OmSum}{\mathlarger{\mathlarger{\Omega}}}
\newcommand{\tet}{\text{tet}_\beta}
\newcommand{\slog}{\text{slog}_\beta}
\begin{document}

\title{The Limits of a Family;\\\Large{of Asymptotic Solutions to The Tetration Equation}}

\author{James David Nixon\\
	JmsNxn92@gmail.com\\}

\maketitle

\begin{abstract}
In this paper we construct a family of holomorphic functions $\beta_\lambda (s)$ which are solutions to the asymptotic tetration equation. Each $\beta_\lambda$ satisfies the functional relationship ${\displaystyle \beta_\lambda(s+1) = \frac{e^{\beta_\lambda(s)}}{e^{-\lambda s} + 1}}$; which asymptotically converges as $\log \beta_\lambda(s+1) = \beta_\lambda (s) + \mathcal{O}(e^{-\lambda s})$ as $\Re(\lambda s) \to \infty$. This family of asymptotic solutions is used to construct a holomorphic function $\tet(s) : \mathbb{C}/(-\infty,-2] \to \mathbb{C}$ such that $\tet(s+1) = e^{\tet(s)}$ and $\tet : (-2,\infty) \to \mathbb{R}$ bijectively.
\end{abstract}

\emph{Keywords:} Complex Analysis; Infinite Compositions; Complex Dynamics.\\

\emph{2010 Mathematics Subject Classification:} 30D05; 30B50; 37F10; 39B12; 39B32\\

\section{Introduction}\label{sec1}
\setcounter{equation}{0}

This paper will start with a general theorem the author has shown a multitude of times, but of which most recently appears in \cite{Nix2,Nix3}. In \cite{Nix2} it is shown for a specific case, and modified to a real analysis scenario a couple more times; but in \cite{Nix3} the general theorem is given. We'll use this introduction to introduce the theorem, and talk a little bit about the notation.

\begin{theorem}\label{thmA}\footnote{We've added a proof of this theorem in the appendix \ref{app}.}
Let $\{H_j(s,z)\}_{j=1}^\infty$ be a sequence of holomorphic functions such that $H_j(s,z) : \mathcal{S} \times \mathcal{G} \to \mathcal{G}$ where $\mathcal{S}$ and $\mathcal{G}$ are domains in $\mathbb{C}$. Suppose there exists some $A \in \mathcal{G}$, such for all compact sets $\mathcal{N}\subset\mathcal{G}$, the following sum converges,

\[
\sum_{j=1}^\infty ||H_j(s,z) - A||_{z \in \mathcal{N},s \in \mathcal{S}} = \sum_{j=1}^\infty \sup_{z \in \mathcal{N},s \in \mathcal{S}}|H_j(s,z) - A| < \infty
\]

Then the expression,

\[
H(s) = \lim_{n\to\infty}\OmSum_{j=1}^n H_j(s,z)\bullet z = \lim_{n\to\infty} H_1(s,H_2(s,...H_n(s,z)))\\
\]

Converges uniformly for $s \in \mathcal{S}$ and $z \in \mathcal{N}$ as $n\to\infty$ to $H$, a holomorphic function in $s\in\mathcal{S}$, constant in $z$.
\end{theorem}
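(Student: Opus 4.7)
The plan is a Cauchy argument that converts the $\ell^1$ hypothesis into contraction estimates for the composition, using Cauchy's derivative estimate to promote sup-norm control into Lipschitz control on a small disk around $A$. The load-bearing step is this promotion from a sup-norm to a Lipschitz bound; it is what makes the assumption of holomorphy genuinely necessary, since a merely continuous analogue with the same $\ell^1$ hypothesis would be false.

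First I would enlarge $\mathcal{N}$ to a compact $\mathcal{N}^* \subset \mathcal{G}$ containing a closed disk $\overline{B}(A, 2\delta)$ for some $\delta > 0$; the hypothesis still applies there, so $M_j := \sup_{s \in \mathcal{S},\, z \in \mathcal{N}^*}|H_j(s,z) - A|$ is summable and in particular $M_j \to 0$. By Cauchy's derivative estimate on $\overline{B}(A, 2\delta)$,
\[
\sup_{s \in \mathcal{S},\, z \in \overline{B}(A,\delta)} |\partial_z H_j(s,z)| \leq M_j/\delta =: L_j,
\]
so $H_j(s, \cdot)$ is $L_j$-Lipschitz on $\overline{B}(A,\delta)$. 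Choosing $J$ with $M_j < \delta/2$ (hence $L_j < 1/2$) for all $j \geq J$ makes the small disk forward-invariant under every such $H_j$---so after one application, every trajectory enters and stays in $\overline{B}(A,\delta)$---and makes the $L_j$ decay to zero.

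Next I would show uniform convergence of the inner tails $T_{n,k}(s,z) := H_n(s, H_{n+1}(s, \ldots H_{n+k}(s,z)\ldots))$ for $n \geq J$. For $k \geq 2$ both arguments of the outermost $H_n$ already lie in $\overline{B}(A,\delta)$, so the Lipschitz bound telescopes to
\[
|T_{n,k}(s,z) - T_{n,k-1}(s,z)| \leq L_n L_{n+1} \cdots L_{n+k-2} \cdot 2 M_{n+k-1},
\]
with the $k = 1$ case $|T_{n,1} - T_{n,0}| \leq 2M_n$ handled by the triangle inequality around $A$. Since $L_j \leq 1$, $\sum_k |T_{n,k} - T_{n,k-1}|$ is bounded by $2 \sum_{j \geq n} M_j$, uniformly in $(s,z) \in \mathcal{S} \times \mathcal{N}^*$, so $T_{n,k} \to T_n(s,z)$ uniformly. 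Comparing two starting points $z_1, z_2 \in \mathcal{N}^*$ via the same Lipschitz chain yields $|T_{n,k}(s,z_1) - T_{n,k}(s,z_2)| \leq 2\delta \prod_{i=n}^{n+k-1} L_i \to 0$ geometrically, so the limit $T_n = T_n(s)$ depends only on $s$.

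To finish, for $n \geq J$ the partial composition $\Phi_n(s,z) := H_1(s, H_2(s, \ldots H_n(s,z)\ldots))$ factors as $H_1(s, \cdots H_{J-1}(s, T_{J,n-J}(s,z)))$: a fixed finite outer block applied to the inner tail. Uniform convergence of the tail, together with continuity of the outer block on the compact $\overline{B}(A,\delta)$, delivers uniform convergence $\Phi_n \to H(s) := H_1(s, \cdots H_{J-1}(s, T_J(s)))$ on $\mathcal{S} \times \mathcal{N}$, and $H$ depends only on $s$. Each $\Phi_n$ is holomorphic in $s$ as a composition of holomorphic maps, so by Weierstrass's theorem the uniform limit $H$ is holomorphic on $\mathcal{S}$.
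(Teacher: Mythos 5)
Your argument is correct and, while it shares the same basic engine as the appendix proof (Cauchy estimates converting the summable sup-norm hypothesis into geometric control, then a telescoping series), it is organized around a genuinely different decomposition. The paper works with the head compositions $\phi_m(s,z)=H_1(s,H_2(s,\dots H_m(s,z)))$: it first shows the far tails stay in a small neighbourhood of $A$, concludes that the family $\{\phi_m\}$ is uniformly bounded, turns that bound via Cauchy's integral formula into derivative bounds $\|\tfrac{d^k}{dz^k}\phi_m\|\le M\,k!\,L^k$ near $A$, and then applies Taylor's theorem to $\phi_{m+1}(s,z)-\phi_m(s,z)=\phi_m(s,H_{m+1}(s,z))-\phi_m(s,z)$ at $z=A$ to get $\|\phi_{m+1}(s,A)-\phi_m(s,A)\|\le C\,\|H_{m+1}(s,A)-A\|$; summability then gives uniform convergence at the single base point $A$, and independence of $z$ is argued afterwards by splitting off a tail that collapses to $A$. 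You instead put the Cauchy estimate on each individual $H_j$, producing Lipschitz constants $L_j=M_j/\delta$ that are eventually less than $1/2$, so a small disk about $A$ is forward invariant and each late map is a strict contraction there; the telescoping is done on the inner tails $T_{n,k}$, with differences bounded by products of the $L_j$ times the summable $M_j$, and constancy in $z$ falls out quantitatively from the geometric decay of $\prod_i L_i$. Your route avoids the normal-family and Taylor-series machinery, gives joint uniform convergence in $(s,z)$ directly, and makes the $z$-independence step (the sketchiest part of the paper's proof) explicit; the paper's route, by estimating the derivative of the whole composite rather than each factor, never needs the individual maps to be contractions, only the composite family to be bounded.

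One caveat, which you share with the appendix proof rather than introduce: in the last step you push the uniformly convergent tail through the fixed outer block $H_1,\dots,H_{J-1}$, and for uniformity in $s$ this needs equicontinuity of $w\mapsto H_j(s,w)$, uniformly over $s\in\mathcal{S}$, on the sets where the intermediate values live. Your Cauchy estimate supplies this on $\overline{B}(A,\delta)$ for the innermost head map, but the image set $\{H_{J-1}(s,w):s\in\mathcal{S},\,w\in\overline{B}(A,\delta)\}$ is only known to be bounded and contained in $\mathcal{G}$, not compactly contained, so the hypothesis does not immediately give uniform control for the next map out. The paper makes the same implicit assumption when it asserts the composite family is uniformly bounded, so this is a shared soft spot of the theorem as stated rather than a defect of your proof; it would be worth a sentence acknowledging it or an added hypothesis that the relevant image sets sit compactly in $\mathcal{G}$.
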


Upon which, this theorem provides a manner of proving holomorphy of an infinite composition of holomorphic functions; and it only requires that a certain sum converges. Where here, an infinite composition is denoted,

\[
\lim_{n\to\infty} H_1(s,H_2(s,...H_n(s,z))) = \OmSum_{j=1}^\infty H_j(s,z) \bullet z\\
\]

Much of the theory of infinite compositions, as the author has written about in \cite{Nix,Nix2,Nix3,Nix4} depends on the behaviour of a sum which we compare with the infinite composition. In the case of this paper (as in \cite{Nix2,Nix3}), the infinite composition falls into a degenerate category. This is the case that the value in $z$ of the infinite composition will be constant. And in contrast, in the non-degenerate category (as in \cite{Nix,Nix4}), we'd have that our infinite composition is holomorphic in $z$ and non-constant.

For this reason, we won't speak of $z$ at all, except to denote the manner of composition (like binding a variable to an integral, and then tossing it away afterwords). Instead, we'll be talking about two variables, $s,\lambda \in \mathbb{C}$. And discussing infinite compositions with these two variables.

This will birth us a two variable holomorphic function $\beta_\lambda(s)$; which we'll call a family of solutions to the asymptotic tetration equation. Where, for us, we'll call a function $l(s)$ a solution to the asymptotic tetration equation if,

\[
\log l(s+1) - l(s) \to 0\,\,\text{as}\,\,|s|\to\infty\\
\]

Where we'll mostly be concerned with $|s|\to\infty$ while $s$ is in a half-plane; and these things are holomorphic (or with countable singularities) unless stated otherwise. These asymptotic solutions, essentially look like tetration at infinity, but everywhere else they may not look like tetration. This allows us to talk about logarithms of these things at infinity in a nice manner. And if we are able to solve the equation $\log F(s+1) = F(s)$ for large $s$, repeatedly taking logarithms allows us to extend this definition almost everywhere in $\mathbb{C}$. 

So the idea is to take our family of asymptotic solutions $\beta_\lambda$ and construct an error term $\tau_\lambda$ which solves the tetration equation for large $s$. Since $\beta_\lambda$ will be a well behaved solution to the asymptotic tetration equation; this is doable.

\section{The family of functions $\beta_\lambda$}\label{sec2}
\setcounter{equation}{0}

We will start our foray by pulling out of a hat the sequence of functions we want to infinitely compose to get $\beta_\lambda$. We'll denote this sequence of functions $q_j(s,\lambda,z)$; where the $z$ value will disappear in the end. Write,

\[
q_j(s,\lambda,z) = \frac{e^z}{e^{\lambda (j-s)} + 1}\\
\]

Where the index $j \in \mathbb{N}$ and $j\ge 1$. Now, we're going to force $\Re(\lambda) > 0$ and that $\lambda(j-s) \neq (2k+1)\pi i$ for all $k\in\mathbb{Z}$. The first restriction will be needed for the summation, and the second restriction ensures we have no poles. We'll call this domain of holomorphy $\mathbb{L}$, in which each $q_j(s,\lambda,z) : \mathbb{L} \times \mathbb{C} \to \mathbb{C}$, where $(s,\lambda) \in \mathbb{L}$ and $z \in \mathbb{C}$. 

Observe that,

\[
\sum_{j=1}^\infty |q_j(s,\lambda,z) | = \sum_{j=1}^\infty |\frac{e^z}{e^{\lambda (j-s)} + 1}| < \infty\\
\]

But, even better than this, we have a normally converging sum. Let $\mathcal{K} \subset \mathbb{C}$ and let $\mathcal{U} \subset \mathbb{L}$ both be compact sets. Then,

\[
\sum_{j=1}^\infty ||q_j(s,\lambda,z) ||_{\mathcal{U},\mathcal{K}} < \infty\\
\]

This should tell us that Theorem \ref{thmA} is going to be useful, as $q_j$ satisfies all the properties of $H_j$ in the theorem's statement. Now, a small reminder is that Theorem \ref{thmA} has no restriction on how many variables are involved, although it's only stated for one variable. For clarification of this, the reader is pointed to \cite{Nix3}; or to the proof of Theorem \ref{thmA} attached in the appendix. Therein, if we take,

\[
\beta_\lambda(s) = \OmSum_{j=1}^\infty q_j(s,\lambda,z)\bullet z\\
\]

Then $\beta_\lambda(s)$ is holomorphic for $(s,\lambda) \in \mathbb{L}$. It's important to remember what $\beta_\lambda$ looks like though, and why we'd even want this function. We write,

\[
\beta_\lambda(s) = \OmSum_{j=1}^\infty \frac{e^z}{e^{\lambda(j-s)} +1}\bullet z\\
\]

Then, if we shift the argument in $s$ forward by $1$, we get something magical.

\begin{eqnarray*}
\beta_\lambda(s+1) &=& \OmSum_{j=1}^\infty \frac{e^z}{e^{\lambda(j-s-1)} +1}\bullet z\\
&=& \OmSum_{j=0}^\infty \frac{e^z}{e^{\lambda(j-s)} +1}\bullet z\,\,\text{We've shifted the index here}\\
&=& \frac{e^{\OmSum_{j=1}^\infty \frac{e^z}{e^{\lambda(j-s)} +1}\bullet z}}{e^{-\lambda s} + 1}\\
&=& \frac{e^{\beta_\lambda(s)}}{e^{-\lambda s} + 1}\\
\end{eqnarray*}

And from this, we're in a position to state that this is a family of solutions to the asymptotic tetration equation. That is,

\[
\log \beta_\lambda(s+1) - \beta_\lambda (s) = -\log(1+e^{-\lambda s})\\
\]

Which tends to $0$ exponentially as $|s| \to \infty$ while $\Re(\lambda s ) > 0$. Now this form of the $\beta_\lambda$ family is difficult to compute, we need to compute a bunch of nested exponentials--at infinity no less, and so therefore it can be easier to make a change of variables $s = \log(w)/\lambda$. The author would like to thank Sheldon Levenstein for doing this first; as the author rarely numerically evaluates, it was Sheldon's observation that this form is much less exhausting computationally.

Write,

\[
g_\lambda(w) = \beta_\lambda (s)\\
\]

Then this is holomorphic when $w \neq -e^{\lambda j}$,

\[
g_\lambda(w) = \OmSum_{j=1}^\infty \frac{we^z}{e^{\lambda j} + w}\bullet z\\
\]

Upon which, calculating Taylor coefficients for $g_\lambda(w)$ at $0$ are surprisingly simple; especially by the functional equation,

\[
g_{\lambda} (e^{\lambda} w) = \frac{w}{w+1} e^{g_\lambda(w)}\\
\]

Where now computing the Taylor coefficients at $0$ is a relatively simple procedure; it's inductive. This will construct a Taylor-series valid for $|w| < e^{\Re\lambda}$. And to extend $g_\lambda$ to its maximal domain we just iterate the functional equation. We sketch the process for the curious reader who wants to numerically evaluate these functions. 

Call $g^{(k)}_\lambda(0) = a_k$ and $b_k = \frac{d^k}{dw^k}\Big{|}_{w=0} e^{g_\lambda(w)}$; then this is given as,

\begin{eqnarray*}
e^{\lambda k}g^{(k)}_\lambda(0) &=& \sum_{c=0}^k\dbinom{k}{c} \left(\frac{d^{k-c}}{dw^{k-c}}\Big{|}_{w=0} \frac{w}{w+1}\right) \left(\frac{d^c}{dw^c}\Big{|}_{w=0} e^{g_\lambda(w)}\right)\\
&=&\sum_{c=0}^{k-1} \dbinom{k}{c} (k-c)!(-1)^{k-c+1} \frac{d^c}{dw^c}\big{|}_{w=0} e^{g_\lambda(w)}\\
e^{\lambda k}a_k &=& k!(-1)^{k+1}\sum_{c=0}^{k-1}\frac{(-1)^{c}}{c!} b_c\\
b_{c} &=& \sum_{d=0}^{c-1} \dbinom{c-1}{d} b_d a_{c-d}\\
\end{eqnarray*}

Where,

\[
g_\lambda(w) = \sum_{k=0}^\infty a_k \frac{w^k}{k!}\,\,\text{for}\,\,|w| < e^{\Re \lambda}\\
\]

Which is the process which generates our coefficients. Below we've attached some graphs of these functions. When $\lambda = \log(2)$; and when we're $n$ compositions deep, $g_\lambda(x)$ looks like Figure \ref{figg1}.

\begin{figure*}[htp]
  \centering
  \includegraphics[scale=0.5]{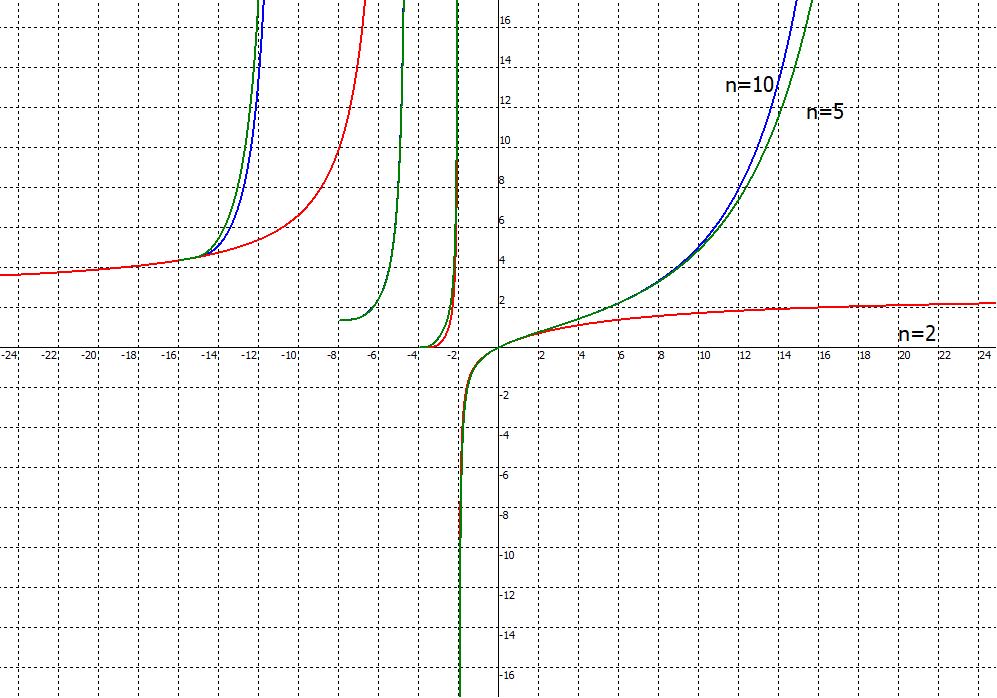}\quad
  \hfill
  \caption {The function $g_{\log(2)}(x)$ for $n=2,5,10$ iterations.}
  \label{figg1}
\end{figure*}

Convergence is very fast in the infinite composition manner. And we can begin to see the rapid growth. This will eventually start to grow faster than exponentiation by the law $g(2x) = \frac{x}{x+1}e^{g(x)}$; but it starts out balanced and well behaved. We can also clearly see the essential singularities beginning to form at $x = -2^j$ for $j \ge 2$ and the pole at $x=-2$. In Figure \ref{figLOG}, we can see this functions behaviour in the complex plane.\\

\begin{figure}
  \centering
  \includegraphics[scale=0.7]{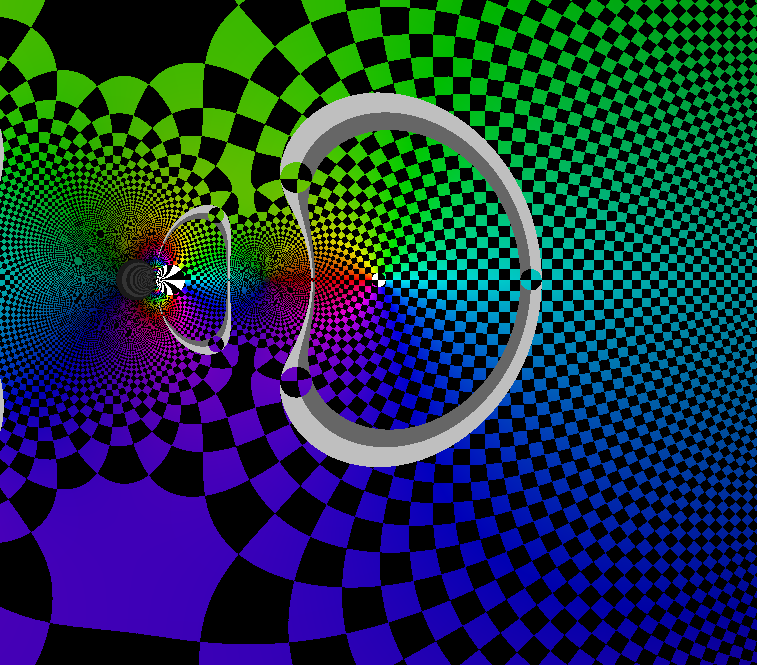}\quad
  \hfill
  \caption {The function $g_{\log(2)}(w)$ for $n=10$ iterations.}
  \label{figLOG}
\end{figure}

For the value $\lambda = 1/2 + 3i$ we've also included a hue plot of $g_\lambda(w)$ in Figure \ref{figGALT}. This is solely done for the tenth iteration--and is good enough for local values. This graph includes grid-lines, the unit disk, and a black and white checkered marker for the origin.\\

\begin{figure*}
  \centering
  \includegraphics[scale=0.6]{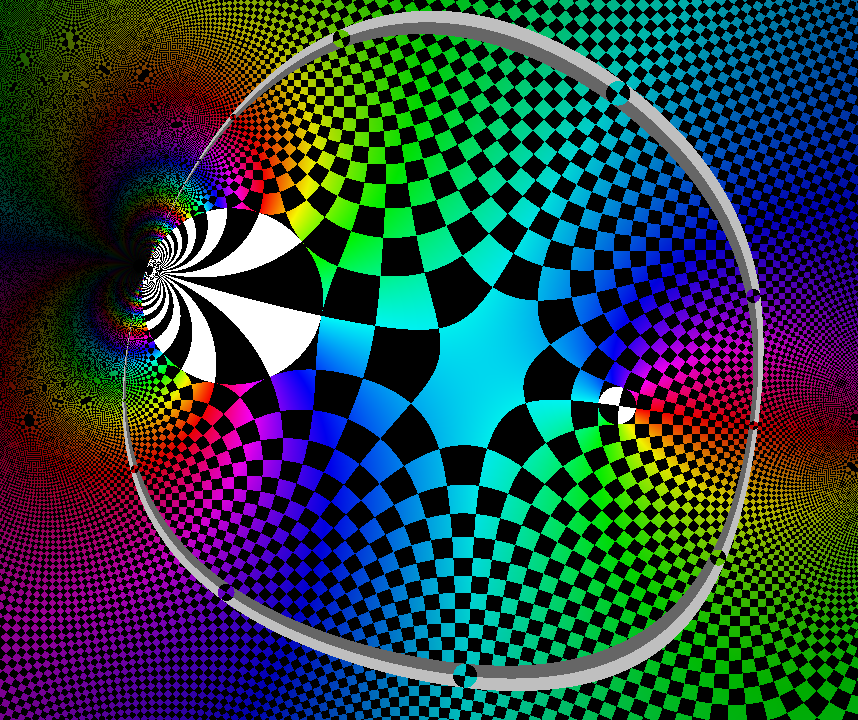}\quad
  \hfill
  \caption {The function $g_{\lambda}(w)$ about $w=0$ for $n=10$ iterations; $\lambda = 1/2 + 3i$.}
  \label{figGALT}
\end{figure*}

In Figure \ref{figGALT2} we can see this graph zoomed out further; where we're bound to have a numerical discrepancy because we've only used $n=10$ iterations.\\

\begin{figure*}[htp]
  \centering
  \includegraphics[scale=0.6]{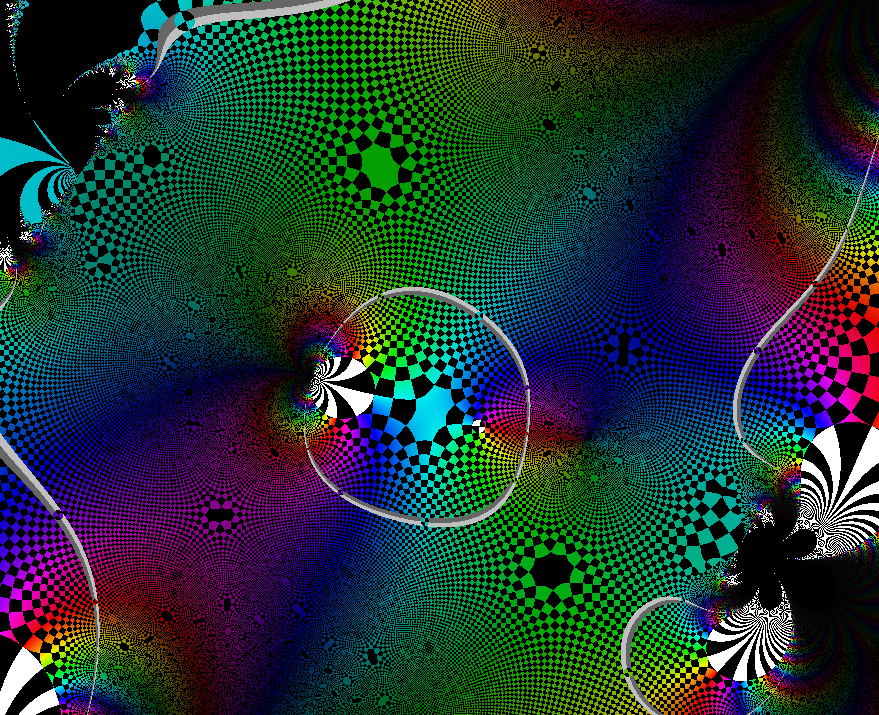}\quad
  \hfill
  \caption {The function $g_{\lambda}(w)$ about $w=0$ for $n=10$ iterations; $\lambda = 1/2 + 3i$; zoomed out further.}
  \label{figGALT2}
\end{figure*}

As you can see, there is a lot of chaos in these functions. They display essential singularity behaviour; and don't behave as one would expect a holomorphic function to behave as $w \to \infty$. It's our job to regulate these functions.\\

Sheldon Levenstein, is again, to thank for this change of variables. This form of many of the equations the author has solved, make the solutions look like a kind of mock Schr\"{o}der equation. Where in Schr\"{o}der's case one would solve,

\[
\Psi(Ls) = e^{\Psi(s)}\\
\]

We are solving something similar, but adding a multiplicative factor to the construction. This helps tremendously at manipulating the complex dynamics of these objects. And the convergents can make a very complicated thing less so. Plus, computationally it compares to calculating $w$ versus calculating $e^s$--and so avoids overflow errors that much better.\\ 

It's important to also note that $\beta_\lambda(s)$ has an exponential series--from $g_\lambda(w)$'s Taylor series. Which is,

\[
\beta_\lambda(s) = \sum_{k=1}^\infty a_k\frac{e^{k\lambda s}}{k!}\\
\]

Which is valid for $\Re(s) < 1$. Which implies that $\beta_\lambda(s+\frac{2\pi i}{\lambda}) = \beta_\lambda(s)$, so that our function is periodic in the $s$ argument. You can also see this by inspection, plugging in the value in the infinite composition.

We compress all this knowledge into the existence of a family of functions which solve the asymptotic tetration equation.

\begin{theorem}[Family Of Asymptotic Tetration Functions]\label{thmFAM}
There exists a family of functions $\beta_\lambda$ which are holomorphic on $\mathbb{L} =\{(s,\lambda) \in \mathbb{C}^2\,|\, \Re(\lambda) > 0,\,\,\lambda(j-s) \neq (2k+1)\pi i,\,\,j,k \in \mathbb{Z},\,\,j\ge1\}$. These functions are expressible as,

\[
\beta_\lambda(s) = \OmSum_{j=1}^\infty \frac{e^z}{e^{\lambda(j-s)} + 1}\bullet z\\
\]

Satisfy the functional equation,

\[
\beta_\lambda(s+1) = \frac{e^{\beta_\lambda(s)}}{e^{-\lambda s} + 1}\\
\]

And the asymptotic relationship,

\[
\log(\beta_\lambda(s+1)) - \beta_\lambda(s) = \mathcal{O}(e^{-\lambda s})\\
\]

As $|s| \to \infty$, wherever $\Re(\lambda s) > 0$.
\end{theorem}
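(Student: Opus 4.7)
The plan is to verify the three claims in the theorem one at a time, using the infinite composition construction and Theorem \ref{thmA} as the engine. First, for holomorphy on $\mathbb{L}$, I would fix an arbitrary compact set $\mathcal{U} \subset \mathbb{L}$ and a compact set $\mathcal{K} \subset \mathbb{C}$ and check the hypothesis of Theorem \ref{thmA} with $H_j(s,\lambda,z) = q_j(s,\lambda,z)$ and $A = 0$. By compactness of $\mathcal{U}$, there exist constants $\delta > 0$ and $M < \infty$ such that $\Re(\lambda) \geq \delta$ and $|\lambda s| \leq M$ throughout $\mathcal{U}$; moreover, the closed set $\mathcal{U}$ stays a positive distance from the excluded loci $\lambda(j-s) = (2k+1)\pi i$, so $|e^{\lambda(j-s)} + 1|$ is bounded below uniformly for $j \geq 1$. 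A straightforward estimate then gives $|q_j(s,\lambda,z)| \leq C e^{-j\delta}$ uniformly on $\mathcal{U} \times \mathcal{K}$ for some constant $C$ depending on $\mathcal{U}$ and $\mathcal{K}$, and the sum $\sum_j \|q_j\|_{\mathcal{U},\mathcal{K}}$ converges geometrically. Theorem \ref{thmA} then asserts that the composition $\beta_\lambda(s) = \OmSum_{j=1}^\infty q_j(s,\lambda,z) \bullet z$ converges to a holomorphic function on $\mathbb{L}$ independent of $z$.

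Next I would derive the functional equation $\beta_\lambda(s+1) = e^{\beta_\lambda(s)}/(e^{-\lambda s}+1)$ along the lines sketched in the excerpt. The substitution $s \mapsto s+1$ reindexes the composition, since $q_j(s+1,\lambda,z) = q_{j-1}(s,\lambda,z)$, so the reindexed composition runs from $j = 0$. Peeling off the outermost $j=0$ factor and applying the definition of infinite composition, the remaining inner tail is $\beta_\lambda(s)$ itself, so that $\OmSum_{j=0}^\infty q_j(s,\lambda,z) \bullet z = q_0(s,\lambda,\beta_\lambda(s)) = e^{\beta_\lambda(s)}/(e^{-\lambda s}+1)$. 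This step requires a short justification: the convergent bound on $\sum_j \|q_j\|$ permits appending or removing any finite number of outer factors without disturbing convergence, and the uniform convergence in Theorem \ref{thmA} lets us commute the limit in $n$ with the continuous operation $w \mapsto q_0(s,\lambda,w)$.

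Finally, for the asymptotic estimate, I would take logarithms on both sides of the functional equation, obtaining $\log \beta_\lambda(s+1) - \beta_\lambda(s) = -\log(1 + e^{-\lambda s})$. Since $|e^{-\lambda s}| = e^{-\Re(\lambda s)} \to 0$ as $|s| \to \infty$ within the region $\Re(\lambda s) > 0$, the principal branch of the logarithm is well-defined on the right-hand side for all sufficiently large $|s|$, and the standard estimate gives $|\log(1+e^{-\lambda s})| \leq 2|e^{-\lambda s}|$ once $|e^{-\lambda s}| \leq 1/2$, which is exactly the $\mathcal{O}(e^{-\lambda s})$ bound claimed. The main obstacle I anticipate is the careful bookkeeping in the peeling step of the functional equation argument, where one needs to confirm that the reindexed composition genuinely equals $\beta_\lambda$ wrapped inside the outer $q_0$; the other steps — the geometric decay of $q_j$ and the logarithmic asymptotic — are essentially mechanical once the compactness arguments for $\mathcal{U} \subset \mathbb{L}$ are in hand.
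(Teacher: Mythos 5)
Your proposal is correct and follows essentially the same route as the paper: the paper's proof is just a pointer to Theorem \ref{thmA} for convergence, the index-shift/peeling computation for the functional equation, and taking logarithms for the asymptotic, all of which you reproduce (with the compactness estimates made explicit). No substantive difference.
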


\begin{proof}
For convergence: see Theorem \ref{thmA}--see Appendix (or \cite{Nix3}) for proof. The functional equation is given from convergence. The asymptotics, by the functional equation.
\end{proof}

In Figure \ref{figBETALOG} is attached a graph of $\beta_{log(2)}(x)$ for $x \in [-10,4]$ for $n=100$ iterations. We can clearly see the beginning of our super-exponential growth. Trying to go further out in the $x$ variable will cause overflow errors very fast.

\begin{figure}
    \centering
    \includegraphics[scale=0.5]{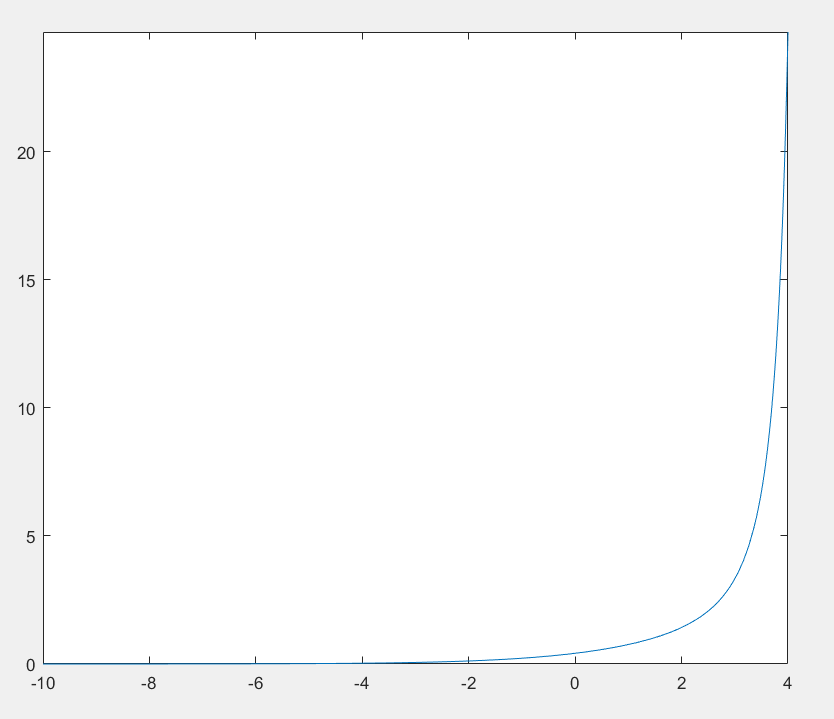}
    \caption{The function $\beta_{log(2)}(x)$ at $n=100$ iterations.}
    \label{figBETALOG}
\end{figure}

\section{The exponential convergents}\label{sec3}
\setcounter{equation}{0}

In this section we'll focus on better approximating tetration using $\beta_\lambda(s)$ at infinity. This is a difficult idea to intuit, but we're going to better understand its behaviour at infinity. The first thing we'd like to do is construct a sequence of convergents. Let's call,

\begin{eqnarray*}
\log(\beta_\lambda(s+1)) - \beta_\lambda(s) &=& \tau_\lambda^1(s) = -\log(1+e^{-\lambda s})\\
\log \log (\beta_\lambda(s+2)) - \beta_\lambda(s) &=& \tau_\lambda^2(s)\\
\log \log \log (\beta_\lambda(s+3)) - \beta_\lambda(s) &=& \tau_\lambda^3(s)\\
&\vdots&\\
\log^{\circ n} \beta_\lambda (s+n) - \beta_\lambda (s)&=& \tau_\lambda^n(s)\\
\end{eqnarray*}

Upon which, the asymptotic relationship,

\[
\log(1+\frac{\tau_\lambda^n(s+1)}{\beta_\lambda(s+1)}) - \tau_\lambda^{n+1}(s) = \log(1+e^{-\lambda s}) = \mathcal{O}(e^{-\lambda s})\\
\]

Which is another side effect of being an asymptotic solution to tetration; but it's required we have exponential convergence of $\tau$ as $\Re(s) \to \infty$. To make sure everything stays well behaved in the iterated logarithm.

To accomplish this, we need that $\beta_\lambda(s) \to \infty$ as $\Re(s) \to \infty$. This paper heavily depends on this result--and the proof is something rather unique to the exponential $e^z$. This is our most heavily cited result from Milnor--alors, one needs to understand it to understand this result.

\begin{theorem}[The Unbounded Theorem]\label{thmUNB}
The function $\beta_\lambda(s) \to \infty$ as $\Re(s) \to \infty$.
\end{theorem}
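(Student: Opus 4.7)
The plan is to exploit the functional equation
$$\beta_\lambda(s+1) = \frac{e^{\beta_\lambda(s)}}{1+e^{-\lambda s}}$$
by iterating forward in $s$. Since the corrective factor $(1+e^{-\lambda s})^{-1}$ tends to $1$ exponentially as $\Re(\lambda s) \to \infty$, the iteration is asymptotically a small perturbation of the pure exponential map $z \mapsto e^z$, and the question reduces to escape-to-infinity for a specific orbit of this perturbed dynamical system.

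First, I would dispose of the easy case of real $\lambda > 0$ and real $s$. Direct inspection of the infinite composition shows that each $q_j(s,\lambda,z)$ maps $\mathbb{R}$ into $(0,\infty)$, whence $\beta_\lambda(s) > 0$ on the real axis. Combined with the functional equation, for $s \ge 0$ one gets the crude bound $\beta_\lambda(s+1) \ge \tfrac{1}{2}e^{\beta_\lambda(s)} \ge \tfrac{1}{2}$; a short induction then upgrades this to tetrational growth of $\beta_\lambda(s_0 + n)$ as $n \to \infty$, establishing the theorem on the real axis.

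Second, for complex $\lambda$ with $\Re(\lambda) > 0$ and for complex $s$, I would use the $\tfrac{2\pi i}{\lambda}$-periodicity of $\beta_\lambda$ in $s$ to restrict attention to $s = s_0 + n$, where $s_0$ lies in a compact subset of a fundamental horizontal strip avoiding the poles excluded from $\mathbb{L}$. Then $z_n := \beta_\lambda(s_0+n)$ satisfies $z_{n+1} = e^{z_n}/(1+\epsilon_n)$ with $|\epsilon_n| = \mathcal{O}(e^{-n\Re(\lambda)})$. A concrete escape estimate specific to $z \mapsto e^z$ then applies: once $z_n$ enters a sector of the form $\Re(z) \ge C$ and $|\Im(z)| \le \pi/2 - \delta$, one has $\Re(z_{n+1}) \ge \tfrac{1}{2}e^{\Re(z_n)}\cos(\pi/2 - \delta)$ and $\Im(z_{n+1})$ stays under control, so the orbit escapes through this sector in a controlled way.

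The main obstacle, and the reason Milnor's analysis of the exponential family is invoked, is showing that the orbit $(z_n)$ actually reaches such an escape region rather than remaining bounded or recurrent. My proposed route is to use the infinite-composition representation of $\beta_\lambda$ directly for $s$ of large real part: split the composition into a tail $j \gtrsim \Re(s)$ which contributes negligibly thanks to large denominators $e^{\lambda(j-s)}+1$, a middle band of bounded length contributing a bounded value $u_{*}$, and a head $j \lesssim \Re(s)$ which iterates roughly as $u \mapsto e^u$ for $\Re(s) - \mathcal{O}(1)$ steps. This should yield a tower-of-exponentials lower bound on $|\beta_\lambda(s)|$ and, with care about angles, place $\beta_\lambda(s)$ inside the escape sector above. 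Controlling the real part through the head iteration so that it is not destroyed by imaginary oscillation in $u_{*}$ is the technical heart of the argument, and it is precisely here that the anchor provided by real-axis positivity, combined with analyticity of $\beta_\lambda$ in $s$, is indispensable for propagating escape from the real axis throughout the fundamental strip.
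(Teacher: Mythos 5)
Your real-axis argument is fine, and it is essentially the only easy part of the statement. The genuine gap is in the complex case, and it sits exactly where you locate "the technical heart": your escape estimate is not actually an escape estimate. The sector $\{\Re(z)\ge C,\ |\Im(z)|\le \pi/2-\delta\}$ is not forward invariant under $z\mapsto e^z$ (nor under the perturbed map): if $z_{n+1}\approx e^{z_n}$ then $\Im(z_{n+1})\approx e^{\Re(z_n)}\sin(\Im(z_n))$, so unless $|\Im(z_n)|\lesssim e^{-\Re(z_n)}$ the very next iterate violates $|\Im(z)|\le \pi/2-\delta$ by an enormous margin; "$\Im(z_{n+1})$ stays under control" is false except on a tetrationally thin set of starting values. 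Staying in any such sector for all $n$ forces $|\Im(z_n)|$ to decay like an iterated exponential, which is precisely why the escaping set of $\exp$ is a measure-zero family of hairs rather than an open region, and why the real axis ($\Im=0$ exactly preserved) is so much easier than any horizontal line above it. For the same reason your closing appeal to "real-axis positivity combined with analyticity" cannot rescue the argument: analytic dependence on $s$ controls only finitely many iterations at a time, and the neighborhood of the real axis on which $n$ steps of the head iteration remain close to their real-axis values shrinks tetrationally in $n$, so for a fixed $s_0$ with $\Im(s_0)\neq 0$ you obtain nothing in the limit. The head/middle/tail splitting of the composition does give a large lower bound on $|\beta_\lambda(s)|$ at a single $s$, but largeness of modulus at one point does not feed back into escape of the orbit $z_n=\beta_\lambda(s_0+n)$, which is what the theorem asserts.

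It is worth noting that the paper does not attempt a sector-trapping argument at all: its proof goes through the measure-theoretic description of typical orbits of the exponential map (the Lyubich--Rees theorem, cited via Milnor, that almost every orbit accumulates on the postsingular orbit $1,e,e^e,\dots$), together with a separate argument to exclude the exceptional set, precisely because direct geometric control of $\Im(z_n)$ off the real axis is not available. So your route is genuinely different from the paper's, but as proposed it fails at the invariance step; to repair it you would need either a mechanism forcing $\Im(\beta_\lambda(s_0+n))$ to be exponentially small at every stage (which is not true for general $s_0$ in the strip) or a replacement for the sector argument of the dynamical type the paper uses.
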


\begin{proof}
The function $\beta_\lambda(s)$ looks like the orbits of $e^z$ as $\Re(s) \to \infty$. The accumulation points of $e^z$ for $z$ almost everywhere (under the Lebesgue measure) are the orbit $1,e,e^e,e^{e^e},...$; the orbit of the exponential at $0$; which diverge to infinity. This fact is cited from Milnor \cite{Mil}; but was actually shown by Lyubich \cite{Lyu} and Rees \cite{Ree}. Milnor even makes a joke that overflow errors eventually equal $0$ after a small amount of iterations

For our case, since $\beta(s+1) = e^{\beta(s)}(1+e^{-\lambda s})^{-1} = e^{\beta(s) - \log(1+e^{-\lambda s})} = e^{\beta(s) - \epsilon}$; for very large $\Re(s)$ we eventually converge towards the orbit $1, e,e^e,e^{e^e},...,$. As these orbits are in $\epsilon$-neighborhoods of each other at infinity--with $\epsilon \to 0$.\\

Take a neighborhood $\mathcal{N}_\epsilon^s = \{z \in \mathbb{C}\,|\, |z-\beta(s)| < 2\epsilon)\}$; then;

\[
\beta(s+1) \in \exp(\mathcal{N}_\epsilon^s) \supset \mathcal{N}_{\epsilon_1}^{s+1}\\
\]

For an appropriate $\epsilon_1$. Iterating this procedure looks like the iterated orbits $\exp^{\circ n}(\mathcal{N}_\epsilon^s) \supset \mathcal{N}_{\epsilon_n}^{s+n}$. These must be dense in $\mathbb{C}$. But additionally; each starting point $z \in \mathcal{N}_\epsilon^{s}$ (almost everywhere) diverges to infinity as $n\to\infty$ under the application $\exp^{\circ n}(z)$. To see this, each $z_n = \exp^{\circ n}(z)$ gets arbitrarily close to $1$--for almost all $z$. Then, for $|z^*-z| < \delta$, we must have $\exp^{\circ n}(z^*) = 1$. The values $z^*$ cover the neighbourhood $\mathcal{N}_\epsilon^s$ as $\delta$ can be found as small as possible by letting $n$ be large enough.

Each point $\beta_\lambda(s+n) \in \mathcal{N}^{s+n}_{\epsilon_n} \subset \exp^{\circ n}(\mathcal{N}_\epsilon^{s})$. Therefore $\lim_{n\to\infty} \beta_\lambda(s +n) \to \infty$ for almost all $s \in \mathbb{C}$--excluding a measure zero set under the Lebesgue measure. The points of non-divergence are points $z$ where $\exp^{\circ n}(z)$ forms a cycle (or a fixed point). No cycle can occur for $\beta_\lambda(s+n)$--by its functional equation. And since all cycles and fixed points are repelling of $e^z$; any neighborhood of a cycle diverges. Thus, the equation $\lim_{\Re(s) \to \infty} \beta_\lambda(s) = \infty$ is valid for all $s$.
\end{proof}

We are going to call upon the sequence of functions $\tau_\lambda^n(s)$, which are holomorphic on some subset $\mathbb{L}$ (we'll get to that later), and try to express them.

To begin, we know that each $\tau_\lambda^n(s)$ has exponential decay to $0$ as $\Re(s) \to \infty$. To show this, we need only look at the functional equation. Assume it follows for $n$ and go by induction to get $n+1$. Take,

\[
\tau_\lambda^{n+1}(s) = -\log(1+e^{-\lambda s}) + \log(1+\frac{\tau_\lambda^{n}(s+1)}{\beta_\lambda(s+1)})\\
\]

Since we know that, by The Unbounded Theorem \ref{thmUNB},

\[
\frac{1}{\beta_\lambda(s+1)} \to 0\,\,\text{as}\,\,\Re(s) \to \infty\\
\]

We know that $\tau_\lambda^{n+1}(s)$ must look like $-\log(1+e^{-\lambda s}) + \mathcal{O}(e^{-\lambda s})$. Which certainly means each $\tau$ has exponential convergence to $0$. But we want something slightly stronger. To continue, 

\[
\tau_\lambda^n(s) = \log^{\circ n} \beta_\lambda (s+n) - \beta_\lambda (s)\\
\]

And we want to show by induction that this thing decays like $-e^{-\lambda s}(1 + o(1))$. These functions satisfy the identity,

\[
\tau_\lambda^{n+1}(s) = \log(\beta_\lambda(s+1) + \tau_{\lambda}^n(s+1)) - \beta_\lambda(s)\\
\]

We have begun this iteration with $\tau_\lambda^0(s) = 0$ and,

\[
\tau_\lambda^1(s) = \sum_{k=1}^\infty \frac{(-1)^k}{k} e^{-k\lambda s}\\
\]

Now the goal is, when talking about $\tau_\lambda^2(s)$, we can almost express it as an exponential series; and similarly with $\tau_\lambda^n$. At least, in a neighborhood of $\infty$. This lets us say that $\tau_\lambda^n(s)$ has a removable singularity at $\Re(s) = \infty$--which will suffice to show the limit $n\to\infty$ converges. We skip straight to the theorem.

\begin{theorem}[The Removable Singularity Theorem]\label{thmRMV}
The functions $\tau_\lambda^n(s)$ are holomorphic for $(s,\lambda) \in \mathbb{L}$ for large enough $\Re(s) > T$ depending on $|\Im(s)| \le T'$; and satisfy,

\[
\tau_\lambda^n(s) = -\log\left(1+e^{-\lambda s}\right) + o(e^{-\lambda s})\\
\]

As $\Re(s) \to \infty$. And the $o(e^{-\lambda s})$ term looks like $\sum_{j=1}^{n-1} \frac{q^je^{-\lambda s}}{P_j(s)}$ for some $0 < e^{-\Re \lambda} < q < 1$. Where,

\[
P_j(s) = \prod_{k=1}^j \beta_\lambda(s+k)\\
\]
\end{theorem}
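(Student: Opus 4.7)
The plan is to induct on $n$, using the recursive identity
\[
\tau_\lambda^{n+1}(s) = -\log\bigl(1+e^{-\lambda s}\bigr) + \log\!\left(1+\frac{\tau_\lambda^{n}(s+1)}{\beta_\lambda(s+1)}\right),
\]
which follows by writing $\log(\beta_\lambda(s+1)+\tau_\lambda^n(s+1)) = \log\beta_\lambda(s+1) + \log(1+\tau_\lambda^n(s+1)/\beta_\lambda(s+1))$ and then substituting $\log\beta_\lambda(s+1) = \beta_\lambda(s) - \log(1+e^{-\lambda s})$ from Theorem \ref{thmFAM}. The base case $n=1$ is $\tau_\lambda^1(s) = -\log(1+e^{-\lambda s})$, which is explicitly holomorphic on $\mathbb{L}$ and has an empty remainder sum.

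For the inductive step there are two tasks. First, holomorphy: fix a strip $|\Im s|\le T'$ and assume $\tau_\lambda^n$ is holomorphic with $|\tau_\lambda^n(s+1)| \le C_n|e^{-\lambda s}|$ for $\Re s > T_n$. By the Unbounded Theorem \ref{thmUNB}, $|\beta_\lambda(s+1)|$ grows without bound as $\Re s \to \infty$ in this strip, so the ratio $\tau_\lambda^n(s+1)/\beta_\lambda(s+1)$ has modulus $<1/2$ for $\Re s$ past some threshold $T_{n+1}\ge T_n$. The outer $\log$ is then the principal branch applied to a value in a small disk around $1$, giving a holomorphic composition on the shrunken half-strip.

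Second, the asymptotic form: substitute the inductive hypothesis into the ratio and Taylor-expand $\log(1+x) = x + O(x^2)$. The dominant piece $-\log(1+e^{-\lambda(s+1)})/\beta_\lambda(s+1)$ contributes $\sim -e^{-\lambda}\cdot e^{-\lambda s}/P_1(s)$, which is the new $j=1$ term. Each previous term $q^{j}e^{-\lambda(s+1)}/P_j(s+1)$ in the inductive sum for $\tau_\lambda^n(s+1)$, once divided by $\beta_\lambda(s+1)$, becomes $q^{j}e^{-\lambda}\cdot e^{-\lambda s}/P_{j+1}(s)$ using the telescoping identity $\beta_\lambda(s+1)\,P_j(s+1) = P_{j+1}(s)$; reindexing gives exactly the $(j+1)$-st term of the new sum with natural exponent $e^{-\lambda}$. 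The quadratic error from $\log(1+x)$ and the gap $-\log(1+e^{-\lambda(s+1)}) + e^{-\lambda(s+1)}$ are of order $e^{-2\lambda s}$, super-exponentially smaller than the surviving terms; these are absorbed by inflating the natural exponent $e^{-\lambda}$ to any real $q\in(e^{-\Re\lambda},1)$.

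The main obstacle, in my view, is not the asymptotic bookkeeping but maintaining holomorphy coherently across the iterated logarithms. Theorem \ref{thmUNB} only guarantees that $\Re\beta_\lambda(s+k)$ diverges; it gives no direct control on $\Im\beta_\lambda(s+k)$, which could in principle drive the argument across a branch cut. The cure is the formulation $\tau_\lambda^{n+1}(s) = \log(\beta_\lambda(s+1)+\tau_\lambda^n(s+1))-\beta_\lambda(s)$: since $\tau_\lambda^n(s+1)$ is exponentially small while $\beta_\lambda(s+1)$ has already been shown to escape to infinity, the perturbed argument sits in a tiny relative neighborhood of $\beta_\lambda(s+1)$, so the branch of the outer $\log$ is forced to agree with the branch used at the previous level. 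This branch-consistency, together with the freedom to take $T_{n+1}$ as large as needed, propagates the induction.
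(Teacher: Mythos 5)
Your proposal is correct and follows essentially the same route as the paper's own proof: induction on $n$ via the recursion $\tau_\lambda^{n+1}(s) = -\log(1+e^{-\lambda s}) + \log\bigl(1+\tau_\lambda^n(s+1)/\beta_\lambda(s+1)\bigr)$, holomorphy from the Unbounded Theorem \ref{thmUNB}, Taylor expansion of the outer logarithm, and the telescoping identity $\beta_\lambda(s+1)P_j(s+1) = P_{j+1}(s)$ with the slack $e^{-\Re\lambda} < q < 1$ absorbing the extra factors. Your explicit attention to branch consistency of the iterated logarithms is a refinement the paper leaves implicit, but it does not change the argument.
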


\begin{proof}
We go by induction on $n$. The initial condition is trivally true because, when $n=1$, the function $\tau_\lambda^1(s) = -\log(1+e^{-\lambda s})$. Assume the result for $n$. Then,

\[
\tau_{\lambda}^{n+1}(s) = -\log(1+e^{-\lambda s}) + \log\left(1+ \frac{\tau_\lambda^n(s+1)}{\beta_\lambda(s+1)}\right)\\
\]

This function is holomorphic for $\lambda (j-s) \neq (2k+1)\pi i$, excluding where; $\beta(s+1) = - \tau_\lambda^n(s+1)$. But $\beta(s+1) \to \infty$ and $\tau_\lambda^n(s) = \mathcal{O}(\sum_{j=1}^{n-1}q^je^{-\lambda s})$. This confirms holomorphy for $\Re(s) > T$ and $|\Im(s)| \le T'$. So, we can expand the second term in a Taylor series;

\[
\log\left (1+ \frac{\tau_\lambda^n(s+1)}{\beta_\lambda(s+1)} \right ) = \sum_{k=1}^\infty \frac{(-1)^{k-1}}{k} \left( \frac{\tau_\lambda^n(s+1)}{\beta_\lambda(s+1)}\right)^k\\
\]

Therefore, using the bound, $\log(1+w) \le (1+\epsilon)|w|$ for $\epsilon \to 0$ as $w \to 0$; writing $(1+\epsilon)e^{-\Re\lambda} < q < 1$:

\begin{eqnarray*}
\tau_\lambda^{n+1}(s) + \log(1+e^{-\lambda s}) &=& \log\left(1+ \frac{\tau_\lambda^n(s+1)}{\beta_\lambda(s+1)}\right)\\
&=&\log\left(1+ \frac{-\log(1+e^{-\lambda (s+1)}) + o(\sum_{j=1}^{n-1} q^je^{-\lambda (s+1)})}{\beta_\lambda(s+1)}\right)\\
&=&(1+\epsilon)\frac{-\log(1+e^{-\lambda (s+1)}) }{\beta_\lambda(s+1)} + o(\sum_{j=1}^{n-1} q^{j+1}e^{-\lambda s})/\beta_\lambda(s+1)\\
\end{eqnarray*}

And, since $\beta_\lambda(s) \to \infty$ as $\Re(s) \to \infty$,

\[
\frac{-\log(1+e^{-\lambda (s+1)}) }{\beta_\lambda(s+1)e^{-\lambda s}} \to \frac{-e^{-\lambda}}{\beta_\lambda(s+1)} \to 0\\
\]

Since $(1+\epsilon)e^{-\Re \lambda} < q < 1$, we can bound the first term $q \frac{e^{-\lambda s}}{\beta_\lambda(s+1)}$. Therefore,

\[
\frac{\tau_\lambda^{n+1}(s) + \log(1+e^{-\lambda s})}{e^{-\lambda s}} \to 0\\
\]

The term $P_j(s+1) \cdot \beta_\lambda(s+1) = P_{j+1}(s)$--and so the recursion on the $o$ term is satisfied. Thus the limit is satisfied as $\Re(s) \to \infty$; and the $o$-term decays at least like $\sum_{j=1}^n\frac{q^je^{-\lambda s}}{P_j(s)}$ for some $e^{-\Re \lambda} < q < 1$.
\end{proof}

The purpose of this $o$ term is pretty straight forward. It allows us to bound the function $\tau_\lambda^n(s)$ independent of the index $n$. This means, we have a normality condition on the family of functions $\{\tau_\lambda^n(s)\}_{n=1}^\infty$. If we call the function,

\[
P_\lambda(s,q) = \sum_{j=1}^\infty \frac{q^j}{|P_j(s)|} = \sum_{j=1}^\infty \dfrac{q^j}{\prod_{k=1}^j |\beta_\lambda(s+k)|}\\
\]

Then our sequence of functions,

\[
||\tau_\lambda^n(s) + \log(1+e^{-\lambda s})||_{\mathcal{K}} \le D||e^{-\lambda s}P_\lambda(s,q)||_{\mathcal{K}}\\
\]

For some $0 < e^{-\Re \lambda} < q < 1$ and $D>1$ dependent on a compact set $\mathcal{K} \subset \mathbb{L}$--so long as the real argument of $s$ in $\mathcal{K}$ is sufficiently large. This bound is created on $\lambda$ and $s$; and is uniform in both variables. Where as we increase the real argument of the set $\mathcal{K}$ we get $D \to 1$ and $q \to e^{-\Re \lambda}$. We write a quick theorem, and give a quick proof.

\begin{theorem}[Normality At Infinity Theorem]\label{thmNRMINF}
For a compact set $\mathcal{K} \subset \mathbb{L}\cup \{\infty\}$; in which $\Re(s) > K$, and $\infty$ is interpreted as $\Re(s) \to \infty$; the sequence of functions $\tau_\lambda^n(s)$ are normal on $\mathcal{K}$ (including the point at infinity)--and satisfy the bound:

\[
||\tau_\lambda^n(s) + \log(1+e^{-\lambda s})||_{\mathcal{K}} \le D||e^{-\lambda s}P_\lambda(s,q)||_{\mathcal{K}}\\
\]

For $D> 1$ and $0 < e^{-\Re \lambda} < q < 1$. Where $D \to 1$ and $q \to e^{-\Re \lambda}$ as the minimum real argument of $\mathcal{K}$ grows.
\end{theorem}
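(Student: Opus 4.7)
The plan is to read both the inequality and the normality conclusion directly out of the inductive bound established in Theorem \ref{thmRMV}, together with one application of Montel's theorem; the only new work is bookkeeping on the constants and a separate handling of the point at infinity via a change of coordinate.

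First, I would unpack the $o$-statement of Theorem \ref{thmRMV} into an explicit pointwise bound
\[
\bigl|\tau_\lambda^n(s) + \log(1+e^{-\lambda s})\bigr| \;\le\; D\,|e^{-\lambda s}| \sum_{j=1}^{n-1} \frac{q^j}{|P_j(s)|}
\]
valid for $s \in \mathcal{K}$ and every $n$. The constant $D$ is the accumulated Taylor remainder factor from $\log(1+w)\le(1+\epsilon)|w|$ used inside the induction of Theorem \ref{thmRMV}, and $\epsilon$ can be forced to $0$ by raising the minimum of $\Re(s)$ on $\mathcal{K}$; likewise $q$ is any number satisfying $(1+\epsilon)e^{-\Re\lambda}<q<1$, so it may be driven toward $e^{-\Re\lambda}$. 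Dominating the partial sum by the full series $P_\lambda(s,q)$ and taking $\sup_{s\in\mathcal{K}}$ on both sides yields the displayed bound, with the claimed asymptotics for $D$ and $q$ reading off from these formulas.

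Second, for normality on the finite part of $\mathcal{K}$: the series $P_\lambda(s,q)$ converges uniformly on $\mathcal{K}$, because Theorem \ref{thmUNB} combined with the functional equation forces $|\beta_\lambda(s+k)|$ to grow tower-fast in $k$, so the denominators eclipse the geometric numerator $q^j$. Hence $||e^{-\lambda s}P_\lambda(s,q)||_\mathcal{K}<\infty$, the family $\{\tau_\lambda^n\}$ is uniformly bounded on $\mathcal{K}$, and Montel's theorem delivers a normal family. To include the point at infinity I would pass to the coordinate $w=e^{-\lambda s}$, which identifies a right-half neighborhood inside a single period strip with a punctured disk about $w=0$; in this coordinate each $\tau_\lambda^n$ is holomorphic on the punctured disk and the bound above shows $\tau_\lambda^n\to 0$ as $w\to 0$, so the singularity is removable. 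The extended family is still uniformly bounded on the closed disk, and Montel applies once more to give normality at the point at infinity.

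The obstacle I expect to be most delicate is promoting Theorem \ref{thmUNB} from a pointwise assertion to one uniform in $s\in\mathcal{K}$: without this, convergence of $P_\lambda(s,q)$ is only pointwise and the estimate above is not a genuine uniform bound. I would handle it by enlarging the threshold $K$ so that compactness and continuity give $|\beta_\lambda(s+k_0)|>2$ for a single initial index $k_0$ uniformly over $s\in\mathcal{K}$, and then iterating the functional equation: the exponential $e^{\beta_\lambda(s+k)}$ dwarfs the damping factor $(1+e^{-\lambda(s+k)})^{-1}$ and propagates a tower-type lower bound for $|\beta_\lambda(s+k)|$, which is comfortably enough for absolute and uniform convergence of $P_\lambda(s,q)$ on $\mathcal{K}$.
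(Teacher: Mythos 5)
Your proposal is correct and follows essentially the same route as the paper: read the $n$-independent bound off Theorem \ref{thmRMV}, dominate the partial sum $\sum_{j=1}^{n-1} q^j/|P_j(s)|$ by the full series $P_\lambda(s,q)$, use its uniform convergence on $\mathcal{K}$ (with $P_\lambda \to 0$ at infinity) to get the displayed inequality, and note that $D \to 1$, $q \to e^{-\Re\lambda}$ as the minimum of $\Re(s)$ on $\mathcal{K}$ grows. The extra steps you supply--the explicit Montel application, the coordinate $w = e^{-\lambda s}$ at the point at infinity, and the uniform tower-growth of $|\beta_\lambda(s+k)|$ via the functional equation--are details the paper leaves implicit rather than a different argument, and they strengthen rather than change the proof.
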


\begin{proof}
Each $\tau_\lambda^n(s)$ looks like,

\[
\tau_\lambda^n(s) = -\log(1+e^{-\lambda s}) + \mathcal{O}(\sum_{j=1}^{n-1} \frac{q^je^{-\lambda s}}{P_j(s)})\\
\]

For a value $0 < e^{-\Re \lambda} < q < 1$; with $q \to e^{-\Re \lambda}$ as $\Re(s) \to \infty$. The series,

\[
P_\lambda(s,q) = \sum_{j=1}^\infty \frac{q^j}{|P_j(s)|} = \sum_{j=1}^\infty \dfrac{q^j}{\prod_{k=1}^j|\beta_\lambda(s+k)|} < \infty
\]

And converges uniformly for $(s,\lambda) \in \mathcal{K} \subset \mathbb{L}$; including the point at infinity where $P_\lambda(\infty) = 0$. We can add these terms up to $\infty$ and we must have,

\[
\tau_\lambda^n(s) = -\log(1+e^{-\lambda s}) + \mathcal{O}(e^{-\lambda s}P_\lambda(s,q))\\
\]

And the constant on this $\mathcal{O}$ term is independent of $n$ by The Removable Singularity Theorem \ref{thmRMV}, and the $o$ term we derived. Thus, we can write,

\[
||\tau_\lambda^n(s) + \log(1+e^{-\lambda s})||_{\mathcal{K}} \le D||e^{-\lambda s}P_\lambda(s,q)||_{\mathcal{K}}\\
\]

For some $D>1$. We must have $D \to 1$ and $q \to e^{-\Re \lambda}$ as the minimum real argument of $\mathcal{K}$ grows; because the $o$-term for each $n$ looks like $\sum_{j=1}^{n-1} \dfrac{e^{-\lambda (s+j)}}{\prod_{k=1}^j \beta_\lambda (s+k)}$ as $\Re(s) \to \infty$.
\end{proof}

\section{Choosing the proper Riemann mapping}\label{sec4}
\setcounter{equation}{0}

The author would like to take the reader through a brief segue in the history of tetration. The year is 1950, and Kneser has published a treatise on iterating the exponential function: \textit{Reelle analytische Losungen der Gleichung $\varphi (\varphi (x))=e^{x}$ und verwandter Funktionalgleichungen} \cite{Kne}. In constructing the iterate of the exponential he had a god knows how revelation. Before the dawn of computers, before the dawn of an efficient way at calculating these things, Kneser saw a solution to tetration.

Now the idea isn't so far out there now. But this idea is still what us ``tetrationers'' think of when we think of a nice solution to tetration. Hell, as far as most of us are concerned, it's the nicest tetration. Hell, at this point in history, it's still \emph{the} tetration.

And what was Kneser's \emph{je ne sais quoi} that flipped everything on its head?.. A Riemann mapping. Nothing more, nothing less. It was the key to his tetration; a Riemann mapping.

The idea was simple enough; we take the inverse Schr\"{o}der function $\Psi$ of $e^z$ about a fixed point $L \in \mathbb{C}$. This function $\Psi$ is entire, and satisfies,

\[
e^{\Psi(z)} = \Psi(Lz)\\
\]

Where, the standard way to iterate this is to write,

\[
F(z) = \Psi(e^{L(z-z_0)})\\
\]

Upon which,

\[
F(z+1) = e^{F(z)}\\
\]

Where, this may betray the simplicity, it looks right, it should be our solution. The problem is, by construction, this function $F: \mathbb{R} \not\to \mathbb{R}$. This tetration is not real-valued. We definitely want the iteration of exponentiation to be real-valued. So, this form is pretty useless.

Now, Kneser's idea was simple, but breathtaking. Instead of talking just about a fixed point $L$, we also talk about its conjugate pair $\overline{L}$. And we want to create a Riemann mapping which glues the two tetrations from above into a single entity. And this will be real-valued, so long as we remember to keep them conjugate similar.

It's difficult to find analyses of Kneser's method, as his paper is in German, and it's yet to be translated. However many people have re-explained his work. The most acute, I find, is in Sheldon Levenstein's interpretation. He simplifies looking for the Riemann mapping by framing it as a search for a periodic function. In many ways, he refers to Kneser's construction in a much more hands-on, crunch the numbers, approach.

To explain, if we call $\mathbb{H} = \{z \in \mathbb{C}\,|\,\Im(z) > 0\}$, there is a $1$-periodic function $\theta(z) : \mathbb{H} \to \mathbb{C}$ such that,

\[
\theta(z) = \sum_{k=0}^\infty p_ke^{2\pi i k z}\\
\]

\[
\text{tet}_K(z) = \Psi(e^{Lz}\theta(z))
\]

Where,

\[
\overline{\text{tet}_K(z)} = \overline{\Psi(e^{Lz}\theta(z))}\\
\]

Is precisely,

\[
\text{tet}_K(\overline{z})\\
\]

Where the conjugate of $\Psi$ is the inverse Schr\"{o}der function about the fixed point $\overline{L}$, and the conjugate $\theta$ function works in the same manner here.

Now, finding this function $\theta$ is very difficult, and no matter how you slice it, you need to compute a Riemann mapping. That Riemann mapping being a miraculous thing. Where firstly, Kneser finds a nice simply connected domain $\mathbb{E}$--where on the boundary $\Psi$ is real-valued, and then produces the Riemann mapping to $\mathbb{H}$, all while respecting the Abel functional equation, and all while making sure this will be real-valued.

And so, at this point in our paper, we have to pay respect that in constructing a holomorphic tetration--Kneser had to pull out a very complicated mapping theorem. And so, any solution to tetration avoiding some kind of complexity similar to this, would either be miraculous or just wrong.\\

In our approximate solutions to tetration, in the asymptotic, they have singularities. They have singularities everywhere. When we talk about,

\[
F_\lambda^n(s) = \beta_\lambda(s) + \tau_\lambda^n(s)\\
\]

We have to account for a plethora of singularities which occur when $\lambda(j-s) = (2 k+1)\pi i$. These are very ugly singularities too, where if,

\begin{eqnarray*}
\beta_\lambda(s) &=& \infty\\
\beta_\lambda(s+1) &=& e^\infty\\
\end{eqnarray*}

And, if we go by inspection, we must have,

\[
\beta_\lambda(1 + \frac{(2k+1)\pi i}{\lambda})\,\,\text{are simple poles}\\
\]

But,

\[
\beta_\lambda(j + \frac{(2k+1)\pi i}{\lambda})\,\,\text{are essential singularities for}\,\,j \ge 2\\
\]

So, if we have any hope in constructing a tetration function $\tet(s)$ which is holomorphic on $s \in \mathbb{H}$, which is real-valued; satisfies $\overline{\tet(s)} = \tet(\overline{s})$; we'll need to get rid of these singularities somehow. And, we'll need a Riemann mapping on $\mathbb{L}$ to do this. Now, given, we don't need as miraculous a Riemann mapping as Kneser. We just have to avoid the poles somehow. Using the $\sqrt{\cdot}$ function will suffice.

The way we're going to do this is actually pretty simple. We're going to use a function $\lambda(s) : \{s \in \mathbb{C}\,|\,|\arg(s)|<\theta,\,0 < \theta < \pi/2\} \to \mathbb{L}_2$. Where $\mathbb{L}_2$ is a projection into the second component of $\mathbb{L}$. This can be better explained, such that $(s,\lambda(s)) \in \mathbb{L}$ is satisfied for $|\arg(s)| < \theta$. We'll require that $\lambda:\mathbb{R}^+ \to \mathbb{R}^+$. And of course that $\lambda$ is holomorphic.

From here, we'll define a sequence of functions,

\[
F_n(s) = \log^{\circ n} \beta_{\lambda(s+n)}(s+n)\\
\]

Where we use all of the above expressions from the previous sections, to prove that the error $\tau^n_{\lambda(s+n)}(s)$ converges as $n\to\infty$. And then we use this to prove that $F_n \to F$. And here, $F$ will be holomorphic for $|\arg(s)| < \theta$. But as soon as $F$ is holomorphic for $\Im(s) = A$ it is necessarily holomorphic for $\Im(s-j) = A$ by the functional relationship and the non-zero nature of $F$. And so, this will construct a tetration function $F(s) : \mathbb{C}/(-\infty,X] \to \mathbb{C}$ which is real-valued.

Hereupon we take the value $x_0 \in (X,\infty)$ in which $F(x_0) = 1$ as a transfer value so that $F(s+x_0) = \tet(s)$; which will be our desired tetration. Which is quite the mouthful; but summarizes pretty clearly our line of attack.

So we write the desired theorem,

\begin{theorem}[The Desired Mapping Theorem]
The function ${\displaystyle \lambda(s) = \frac{1}{\sqrt{1+s}}}$, is holomorphic for $|\arg(s)| < \theta$ for some $0< \theta < \pi/2$, and $\lambda:\mathbb{R}^+ \to \mathbb{R}^+$, where the pair $(s,\lambda(s)) \in \mathbb{L}$ for $|\arg(s)| < \theta$. 
\end{theorem}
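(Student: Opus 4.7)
The plan is to verify the three ingredients of the statement---holomorphy on the sector, the real-to-real property, and membership of $(s,\lambda(s))$ in $\mathbb{L}$---the first two of which are essentially routine and the third of which carries the real content.

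First I would dispose of holomorphy and reality. The principal branch of $\sqrt{\,\cdot\,}$ is holomorphic on $\mathbb{C}\setminus(-\infty,0]$, and for $|\arg s|<\theta<\pi/2$ we have $\Re(1+s)>0$, so $\lambda(s)=1/\sqrt{1+s}$ is holomorphic on such a sector. For $s>0$ real, $\sqrt{1+s}>1$ gives $\lambda(s)\in(0,1)\subset\mathbb{R}^+$. For the half-plane condition $\Re\lambda(s)>0$: the image of the right half-plane under principal $\sqrt{\,\cdot\,}$ lies in $\{|\arg|<\pi/4\}$, and reciprocation preserves that, so $\Re\lambda(s)>0$ throughout the sector.

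The substantive condition is the pole-avoidance $\lambda(s)(j-s)\neq(2k+1)\pi i$ for all $j\geq 1$ and $k\in\mathbb{Z}$. I would substitute $u=\sqrt{1+s}$, so $s=u^2-1$ and
\[
\lambda(s)(j-s)=\frac{(j+1)-u^2}{u}=\frac{j+1}{u}-u.
\]
Setting this equal to $im$ with $m=(2k+1)\pi$ gives the quadratic $u^2+imu-(j+1)=0$, whose principal-branch root (valid when $4(j+1)\geq m^2$) is $u=\tfrac12\sqrt{4(j+1)-m^2}-\tfrac{im}{2}$. Squaring and subtracting $1$ identifies the forbidden $s$-values as $s_{j,k}=j-m^2/2-im\sqrt{4(j+1)-m^2}/2$. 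On $\mathbb{R}^+$ itself, $\lambda(s)(j-s)$ is manifestly real and hence misses the imaginary axis, so in a \emph{thin enough} neighborhood of the positive real axis the pole-avoidance is automatic by continuity.

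The hard part I anticipate is pinning down what ``sector'' should mean so that the claim is actually valid. Tracking the $k=0$ family as $j\to\infty$ gives $\Re s_{j,0}\sim j$ and $\Im s_{j,0}\sim -\pi\sqrt{j}$, so $|\arg s_{j,0}|\sim\pi/\sqrt{j}\to 0$; the forbidden points cluster against $\mathbb{R}^+$ from below, and symmetrically against it from above via $k=-1$. No strict sector $|\arg s|<\theta$ can exclude all of them. I would expect the resolution to replace ``sector'' either by a pinched domain such as $\{|\Im s|<C\sqrt{\Re s}\}$ with $C<\pi$, which is asymptotically contained in $|\arg s|<\theta$ for every $\theta>0$ and sits strictly below the accumulation curve, or by a sector from which the discrete sequence $\{s_{j,k}\}$ has been excised. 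Either variant still provides a simply connected domain containing $\mathbb{R}^+$ and closed enough under $s\mapsto s+n$ for the subsequent construction $F_n(s)=\log^{\circ n}\beta_{\lambda(s+n)}(s+n)$ to be well-defined, which is ultimately all the rest of the paper requires of $\lambda$.
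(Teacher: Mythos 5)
Your computation is right, and the ``hard part'' you anticipated is not a gap in your argument but a genuine defect in the paper. The paper's entire proof of this theorem is the passage in Section \ref{sec6} that rewrites the pole condition as $s = j - (2k+1)\pi i\sqrt{1+s}$ and then asserts, with no computation, that some sector $|\arg(s)|<\theta$ contains no solutions. Your substitution $u=\sqrt{1+s}$ actually solves that equation: with $m=(2k+1)\pi$ and $4(j+1)\ge m^2$, the principal root gives the forbidden points $s_{j,k}=j-\tfrac{m^2}{2}-i\,m\sqrt{(j+1)-\tfrac{m^2}{4}}$, and these genuinely violate the defining condition of $\mathbb{L}$ (for instance $j=10$, $k=0$ gives $s\approx 5.07-9.18\,i$ with $(j-s)/\sqrt{1+s}=\pi i$ exactly). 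Since $\Re s_{j,0}\sim j$ while $|\Im s_{j,0}|\sim \pi\sqrt{j}$, their arguments tend to $0$, so every sector of positive opening contains all but finitely many of them (and their conjugates from $k=-1$). Hence the statement as printed, and the one-line justification the paper gives for it, are false; your analysis is the more careful one. The correct assertion is the one you propose: every forbidden point in the right half-plane satisfies $|\Im s| = |m|\sqrt{\Re s + 1 + m^2/4}\ \ge\ \pi\sqrt{\Re s+1+\pi^2/4}$, so a parabolic region such as $\{\Re s>0,\ |\Im s|< C\sqrt{\Re s}\}$ with $C\le\pi$ avoids the singular set entirely, whereas no sector does.

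Your closing observation is also the right one: such a parabolic domain contains $\mathbb{R}^+$, is mapped into itself by $s\mapsto s+1$, absorbs every fixed $s$ after finitely many right shifts, and keeps $\Re\lambda(s)>0$ (since $|\arg\lambda(s)|<\pi/4$) together with $\lambda(s)=\mathcal{O}(|s|^{-1/2})$ -- which is all that the later construction $F_n(s)=\log^{\circ n}\beta_{\lambda(s+n)}(s+n)$, The Pasted Together Theorem \ref{thmPST}, and the Non-real Lemma \ref{lmaNR} actually use. Be aware, though, that those later sections continue to phrase everything on a sector $|\arg(s)|<\theta$, so the repair cannot be confined to this theorem; it has to be propagated through the subsequent statements (or the sector must be read everywhere as ``sector minus the discrete set $\{s_{j,k}\}$,'' with the attendant branch-cut issues when iterating logarithms near those essential singularities). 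The routine parts of your write-up -- holomorphy of the principal branch on the sector, $\lambda((0,\infty))\subset(0,1)$, and the half-plane condition $\Re\lambda(s)>0$ -- are fine and match what the paper takes for granted.
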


We'll prove this in a bit. First we need to better understand the family of functions $\beta_\lambda(s)$.

\section{Normality theorem at infinity; removing the singularity}\label{sec5}
\setcounter{equation}{0}

When discussing the convergence of $\beta_\lambda(s)$ so far; we've only referred to convergence on compact sets. When discussing infinite compositions at infinity, the author points the reader to \cite{Nix}, where asymptotics were used on infinite compositions. Where the general idea was, we had uniform convergence wherever the sum converges; even at infinity.

This can be seen in the statement of Theorem \ref{thmA}. The set $\mathcal{S}$ is a set where the sum $\sum_{j=1}^\infty ||H_j(s,z) - A||_{s\in\mathcal{S}} < \infty$--and the infinite composition converges uniformly for all $s \in \mathcal{S}$. The point at infinity can be included in $\mathcal{S}$ if the sum converges uniformly on $\mathcal{S}$.

Ipso, if we call a $\mathcal{U} \subset \mathbb{L} \cup \infty$ in which $(s,\lambda) \in \mathcal{U}$ satisfy,

\[
\sum_{j=1}^\infty \frac{1}{||e^{\lambda(j-s)} +1||_{\mathcal{U}}} < \infty
\]

Then, $\beta_\lambda(s)$ is continuous on $\mathcal{U}$, including the point at infinity (it becomes a removable singularity). In this case, conveniently, we can think of the point at infinity as $\Re(s) \to -\infty$ or $\Re(s) \to \infty$. And so, in a half plane of $\mathbb{L}$ we have the identification that,

\[
\beta(-\infty) = 0\\
\]

Because when $\Re(s) = -\infty$ we have that,

\[
\lim_{\Re(s) \to -\infty}\sum_{j=1}^\infty |\frac{e^z}{e^{\lambda (j-s)} +1}| = \sum_{j=1}^\infty \lim_{\Re(s) \to -\infty}|\frac{e^z}{e^{\lambda (j-s)} +1}| = 0\\ 
\]

And this can be done compactly, where if we include this point at infinity, we know that the sum converges uniformly. And additionally that $\beta_\lambda(s)e^{-\lambda (s-1)} = 1$ as $\Re(s) \to -\infty$. This was explored deeply in \cite{Nix}; where we derived asymptotic relationships for infinite compositions.

This transformation can be thought similarly to how we've constructed $g_\lambda(w)$. The value of $g_\lambda(0) = 0$ is the value of $\beta_\lambda(s)$ at $(s,\lambda) = -\infty$; and the value of $g_\lambda(\infty) = \infty$ is the values of $\beta_\lambda(s)$ at $(s,\lambda) = \infty$. This means, exactly like $e^{\lambda s}$ our function $\beta_\lambda(s) \not\to 0$ as $(s,\lambda) \to \infty$, but tends to zero as $(s,\lambda) \to -\infty$. Since $\mathbb{L}$ is in two complex dimensions, we can assign two points at infinity. In such a sense, we are considering $\widehat{\mathbb{L}} = \mathbb{L}\cup \{\infty,-\infty\}$

Now, if we take the $g_\lambda(w)$ approach at understanding $\widehat{\mathbb{L}}$, the real trouble is when we look at the real $\infty$ in question (when $\Re(s) = \infty$, $w=\infty$). This is when $g_\lambda(w)$ is in a neighborhood of infinity; which is when our asymptotic kicks in.

In this space, we consider $g_\lambda(1/w)$, which we give by, yet again, an infinite composition. We'll begin now to denote this,

\[
f_\lambda(w) = g_\lambda(1/w) = \OmSum_{j=1}^\infty \frac{e^z}{e^{\lambda j}w +1}\,\bullet z\\
\]

And we'll attach the commutative diagram,\\

\begin{center}
\begin{tikzcd}
\beta_\lambda(s) \arrow[rr, "s \mapsto s+1"] \arrow[dd, "s = -\log(w)/\lambda"] &  & \frac{e^{\beta_\lambda(s)}}{e^{-\lambda s} + 1} \arrow[dd, "s = -\log(w)/\lambda"] \\
                                                                                &  &                                                                                    \\
f_\lambda(w) \arrow[rr, "w \mapsto e^{-\lambda} w"]                             &  & \frac{e^{f_\lambda(w)}}{w+1}                                                      
\end{tikzcd}
\end{center}

This function is holomorphic on $\mathbb{C}^\times=\{w \in \mathbb{C}\,|\,w\neq 0,\,w \neq-e^{-\lambda j}\}$; but we don't need such an expansive domain. Instead we'll focus on the unit disk $\mathbb{D}$ subtracting our bad points. The set in question is,

\[
\mathbb{D}^\times = \{w \in \mathbb{D}\,:\,0 < |w| < 1,\,w \neq - e^{-\lambda j}\}\\
\]

And on this punctured disk (sort of, forgive the abuse),

\[
f_\lambda(e^{-\lambda}w) = \frac{e^{f_\lambda(w)}}{w+1}\\
\]

Where in this space,

\[
\log f_\lambda(e^{-\lambda}w) = f_\lambda(w) - \log(1+w)\\
\]

This is again understood as a commutative diagram,\\

\begin{center}
 \begin{tikzcd}
f_\lambda(w) \arrow[rr, "w \mapsto e^{-\lambda} w"] \arrow[dd, "z \mapsto \log z"] &  & \frac{e^{f_\lambda(w)}}{w+1} \arrow[dd, "z \mapsto \log z"] \\
                                                                                                   &  &                                                                             \\
\log f_\lambda(w) \arrow[rr, "w \mapsto e^{-\lambda} w"]                                           &  & f_\lambda(w) -\log(1+w)                                                    
\end{tikzcd}   
\end{center}

Where here, all our discussions of convergence are linearized. We have the contraction $\mathcal{T} : p(w) \mapsto p(e^{-\lambda}w)$ and a sum $z\mapsto z+\log(1+w)$; and our exponential $e^{-\lambda s} \mapsto w$. What we want to say, which is incredibly simple in this space, is that this asymptotic relationship is satisfied for $\mathbb{D}^\times \cup \{0\}$. It isn't much, but the holomorphy of tetration follows from this.\\

\begin{figure*}
  \centering
  \includegraphics[scale=0.6]{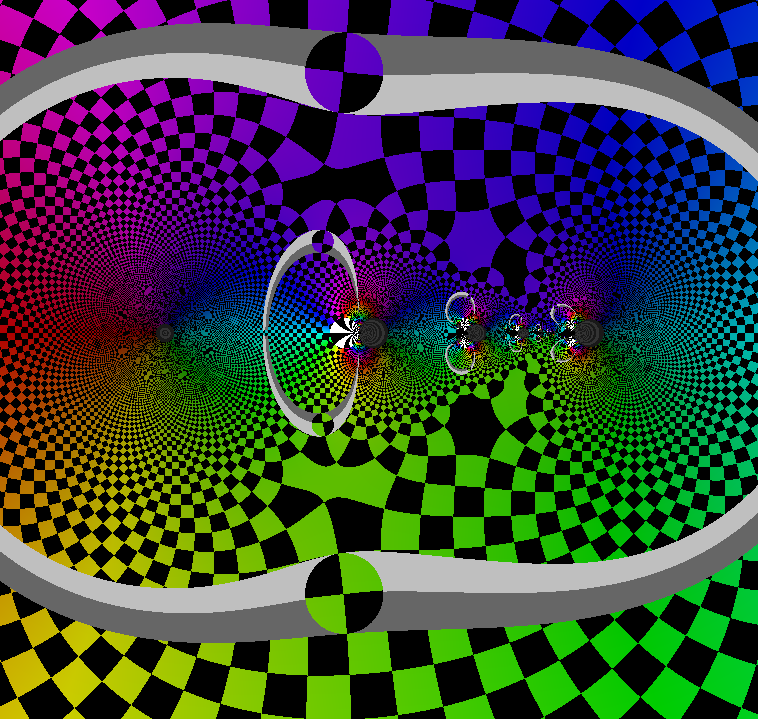}\quad
  \hfill
  \caption {The function $f_{\log(2)}(w) = g_{\log(2)}(1/w)$ about $w=0$ for $n=10$ iterations. For $w \in \mathbb{R}_{x\le 0}$ the function exhibits its most extreme behaviour--we can see the essential singularities pretty clearly. To the right of these (on the right side of the graph) you can see a nice convergence to infinity as $w\to0$.}
  \label{figlog}
\end{figure*}

\begin{figure*}
  \centering
  \includegraphics[scale=0.6]{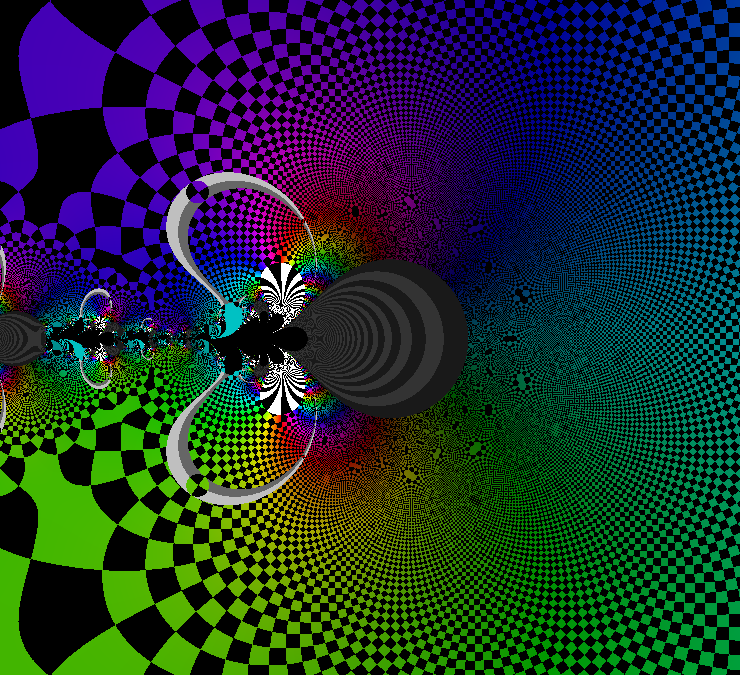}\quad
  \hfill
  \caption {The function $f_{\log(2)}(w) = g_{\log(2)}(1/w)$ about $w=0$ for $n=10$ iterations. We can see its behaviour near zero is fairly regular to the right of the singularities. As it grows super-exponentially to infinity. As we increase the depth of the iteration, this area grows to nearly encircle $0$, excepting $\mathbb{R}_{x \le 0}$.}
  \label{figlog2}
\end{figure*}

\begin{figure*}
  \centering
  \includegraphics[scale=0.6]{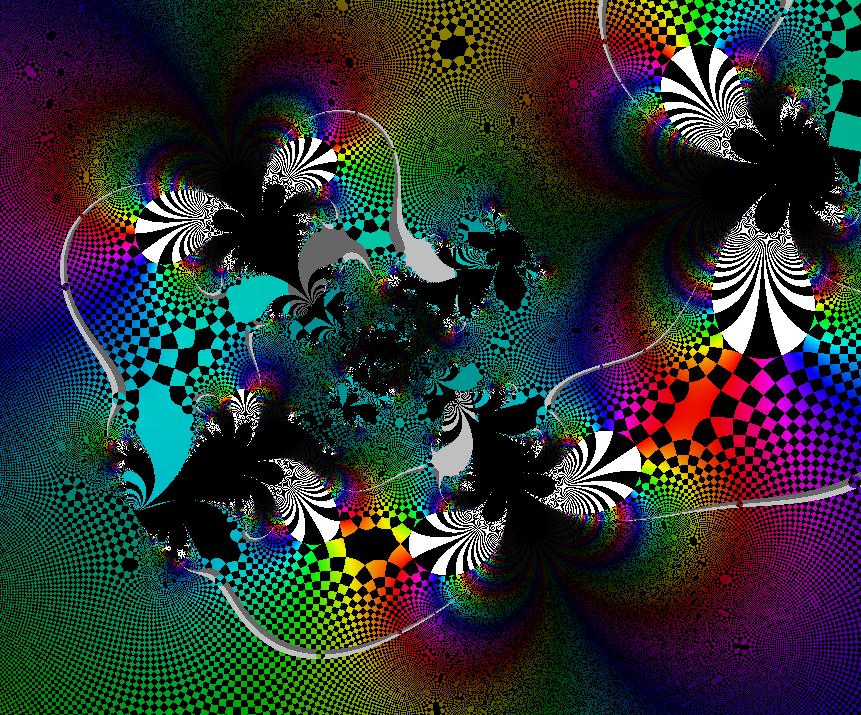}\quad
  \hfill
  \caption {The function $f_{\lambda}(w) = g_\lambda(1/w)$ about $w=0$ for $n=10$ iterations; $\lambda = 1/2 + 3i$. As chaotic as this graph looks; it's precisely as well behaved as the previous figures.}
  \label{fig4}
\end{figure*}

In Figure \ref{figlog}, we see a regular kind of structure. Everything looks fairly uniform in a neighborhood of zero. This function gets closer and closer to satisfying the equation $f(\frac{w}{2}) = e^f$--while satisfying $f(\frac{w}{2}) = \frac{e^{f(w)}}{1+w}$. Nonetheless, it looks fairly regular.

In Figure \ref{figlog2}, we get a closer look at it diverging to infinity. Where, we can expect iterated $\log$'s to converge uniformly--because $f_{\log(2)}$ stays very very large.

In Figure \ref{fig4} we can see the absolute chaos that happens near $0$. Where the essential singularities begin to converge towards $0$. However, since we're only $10$ iterations deep, there are only $9$ essential singularities. We should expect worse in the final result--but also better because at $n=\infty$ (the depth of iteration) the functional equation is satisfied.

Nonetheless, do not be put off by Figure \ref{fig4}; it is exactly as regular as Figure \ref{figlog} and Figure \ref{figlog2}. It just looks insanely more complex, because we've used a complex multiplier, rather than a real multiplier. The mapping argument we make works the exact same way in both scenarios.\\

Despite the chaos; the following identity is satisfied for all $\mathbb{D}^{\times}$ and since $\log(1+w)$ is holomorphic on $\mathbb{D}$ we know that,

\[
\log f_\lambda(e^{-\lambda} w) - f_\lambda(w) = -\log(1+w) \sim -w\,\,|w|\to 0\\
\]

And when we pull back into the space $\mathbb{L}$, it means that,

\[
\log \beta_\lambda(s+1) - \beta_\lambda(s) = 0\,\,\text{when}\,\, (s,\lambda) = +\infty\\
\]

And it means this uniformly. In such a sense we can assign a point at $\pm\infty$ in $\mathbb{L}$ in which $\beta(-\infty) = 0$ and $\beta(\infty) = \infty$; if we keep this with the correlation that $\pm \infty = \Re s$; and at infinity the statement $\log(\beta(\infty + 1)) - \beta(\infty) = 0$ is actually pretty meaningful. Whereupon, in $\widehat{\mathbb{L}}$ this is fully rigorous $\log(\beta(\infty)) = \beta(\infty)$.

It's important to remember we are absolutely not talking about the points $\Im s = \pm \infty$. We have absolutely nothing to say about these points when we make the change of variables back to $s$. We are only focused on shifting the real argument to the left or the right. In fact, in the complex plane this will be much much uglier.

To translate this into a more standard way of thinking $\beta_\lambda(s) \to \infty$ as $\Re(s) \to \infty$, but not necessarily on other paths towards infinity. Since we are keeping $\lambda$ finite, these are \textit{sort of} equivalent statements. We may get ourselves in a bind if we push this isomorphism to the extreme, but it works fine for what we need it for. 

Now when we talk about our functions $\tau_\lambda^n(s) = u_\lambda^n(w)$ in this different correspondence; everything is straight forward. Once everything is linearized we get that,

\[
u_\lambda^{n+1}(w) = \log(f_\lambda(e^{-\lambda}w) + u_\lambda^n(e^{-\lambda}w)) - f_\lambda(w)\\
\]

Each $u_\lambda^n(w)$ is holomorphic on $\delta\mathbb{D}^\times = \{0 < |w| < \delta,\,w \neq -e^{-\lambda j}\}$ for an appropriately small $\delta$ and has a removable singularity at $0$; which is a fixed point $u_\lambda^n(0) = 0$ with multiplier $u_\lambda^{n\prime}(0) = -1$. We can relate this to The Removable Singularity Theorem \ref{thmRMV}; which is,

\[
u_\lambda^n(w) = -\log(1+w) + o(w)\,\text{as}\,w \to 0^*\\
\]

For all $w \in \delta \mathbb{D}^\times$--*if we take this limit appropriately. The manner to take the limit is as $w\to 0 \mapsto e^{-\lambda j} w$ as $j\to\infty$. Which is to say, the limit must be taken as a spiral into zero, controlled by our multiplier $e^{-\lambda}$. This translates into right translations of $s \to s+1$, when we make a change of variables.

This observation is very important because $f_\lambda(w)$ has an essential singularity at $0$. We want to spiral into this essential singularity in a certain way, such that our asymptotics can take over. There's nothing we can say about alternative paths. In fact, there will always be a path where this construction fails; it's an essential singularity; Picard's Theorem is king.

The $o$ term, by The Removable Singularity Theorem \ref{thmRMV}, can be expressed as,

\[
o(w) = w\sum_{j=1}^{n-1} \frac{q^j}{B_j(w)}\\
\]

For a number $e^{-\Re \lambda} < q < 1$; and for the sequence of functions,

\[
B_j(w) = \prod_{k=1}^j f_\lambda(e^{-\lambda k}w)
\]

This implies we have a normality condition on $u_\lambda^n(w)$. Which is to say, that on a compact set $\mathcal{B} \subset \mathbb{D}^\times$, the function $||u_\lambda^n(w)||_{\mathcal{B}} \le M$ for some $M \in \mathbb{R}^+$. So we can expect the sequence of functions $u_\lambda^n$ to be bounded.

But we have something better than that. By The Normality At Infinity Theorem \ref{thmNRMINF} we know that,

\[
||u_\lambda^n(w) +\log(1+w)||_{\mathcal{B}} \le D \left | \left |\sum_{j=1}^\infty \dfrac{wq^j}{\prod_{k=1}^j |f_\lambda(e^{-\lambda k}w)|}\right | \right |_{\mathcal{B}}\\
\]

For $\mathcal{B}$ a compact set for $w\in\mathbb{D}^\times$ and $\Re \lambda > 0$. This can be done uniformly for both $w$ and $\lambda$--and the value $q$ is as it was before. This gives us a clear normality condition on the sequence $u_\lambda^n$, just as we have for $\tau_\lambda^n$.\\

Again, thinking of this as a commutative diagram is perfectly possible. We're trying to develop the limit of this commutative diagram. 

\begin{center}
\begin{tikzcd}
f_\lambda + u_\lambda^n \arrow[rr, "w \mapsto e^{-\lambda} w"] \arrow[dd, "z \mapsto \log (z)"] &  & e^{f_\lambda + u_\lambda^{n+1}} \arrow[dd, "z \mapsto \log(z)"]                                 &  &                                    \\
                                                                                                                                          &  &                                                                                                 &  &                                    \\
\log f_\lambda + u_\lambda^n \arrow[rr, "w \mapsto e^{-\lambda} w"]                                                                       &  & f_\lambda + u_\lambda^{n+1} \arrow[rr, "n \to \infty", dashed] \arrow[dd, "n\to\infty", dashed] &  & {} \arrow[dd, "z \mapsto \log(z)"] \\
                                                                                                                                          &  &                                                                                                 &  &                                    \\
                                                                                                                                          &  & {} \arrow[rr, "w \mapsto e^{-\lambda} w"]                                                       &  & \varphi_\lambda                   
\end{tikzcd}
\end{center}

Where, the limit of this commutative diagram is a map $\varphi_\lambda = f_\lambda + \lim_{n\to\infty} u_\lambda^n = f_\lambda + u_\lambda$; which satisfies the diagram,

\begin{center}
\begin{tikzcd}
\varphi_\lambda \arrow[rr, "w \mapsto e^{-\lambda} w"] \arrow[dd, "z \mapsto \log(z)"] &  & e^{\varphi_\lambda} \arrow[dd, "z \mapsto \log(z)"] \\
                                                                                       &  &                                                     \\
\log \varphi_\lambda \arrow[rr, "w \mapsto e^{-\lambda} w"]                            &  & \varphi_\lambda                                    
\end{tikzcd}
\end{center}

Now, if we consider the contraction map $\mathcal{T}$ on functions $y$ holomorphic on $\delta \mathbb{D}^\times$ and of the form $y(w) = -\log(1+w) + o(w)$ (where the $o$ term is bounded in $n$); then,

\[
\mathcal{T} y_1 - \mathcal{T} y_2 = y_1(e^{-\lambda}w) - y_2(e^{-\lambda} w) = o(e^{-\lambda} w)\\
\]

Thus, looks like $e^{-\lambda}wp(w)$ where $p(w) \to 0$ as $w \to 0$. Which implies that, on a compact set $\mathcal{J} \subset \delta \mathbb{D}^\times$ that,

\[
||\mathcal{T}y_1 - \mathcal{T}y_2||_{w \in \mathcal{J}} \le q ||y_1 - y_2||_{w\in \mathcal{J}}\\
\]

For $|e^{-\lambda}| < q < 1$; which shrinks to $|e^{-\lambda}|$ as $\delta \to 0$. So, on this space we can call $\mathcal{T}$ a contraction mapping. We can better understand why we need this, by looking at the formula,

\[
u_\lambda^{n+1}(w) - u_\lambda^n(w) = \log\left( \frac{f_\lambda(e^{-\lambda}w)+ u_\lambda^{n}(e^{-\lambda} w)}{f_\lambda(e^{-\lambda}w)+ u_\lambda^{n-1}(e^{-\lambda} w)}\right)\\
\]

Then as, $u(e^{-\lambda}w)/f_\lambda(e^{-\lambda}w) \to 0$ under the spiral $e^{-\lambda j}$; we can expect a Lipschitz constant $A \to 1$.

\begin{eqnarray*}
|u_\lambda^{n+1}(w) - u_\lambda^n(w)| \le A|u_\lambda^n(e^{-\lambda}w) - u_\lambda^{n-1}(e^{-\lambda}w)|\\
\end{eqnarray*}

Which is better understood,

\[
\left|\left|\log\left(\frac{f_\lambda(w) + w'}{f_\lambda(w) + w}\right)\right| \right|_{\mathcal{B}} \le A|w-w'|\\ 
\]

With a fixed point at $w,w'=0$. The constant $1 < A < 1+\epsilon$ is eventual for $|w|,|w'| < \delta$; where $\epsilon \to 0$ as $\delta \to 0$; because $\frac{u_\lambda^n(w)}{w} \to -1$ and $\frac{1}{f(e^{-\lambda k}w)} \to 0$ as $k\to\infty$. This implies the Lipschitz constant satisfies at least $A \to 1$. A better estimate would be $A = \frac{1}{f_\lambda(e^{-\lambda j}w)} = o(1)$--but we have such slow decay, $A\to1$ needs to be enough.

Now if we take a compact set,

\begin{eqnarray*}
\mathcal{B}_{\delta,\delta'} &=& \{(w,\lambda) \in \mathbb{C}^2\,|\,|w| \le \delta,\, |w + e^{-\lambda j}| \ge \delta' e^{-\Re \lambda j},\,j\ge 1,\\
&& 0< a\le \Re\lambda \le b,\,c \le \Im \lambda \le d\}\cup\{0\}\\
\end{eqnarray*}

Then $||u_\lambda^{n}(e^{-\lambda}w) - u_\lambda^{n-1}(e^{-\lambda} w)||_{\mathcal{B}} \le q||u_\lambda^n(w) - u_\lambda^{n-1}(w)||_{\mathcal{B}}$ for $e^{-\Re\lambda} \le e^{-a} < q < 1$.  And therefore $u_\lambda^n$ converges uniformly as $n\to\infty$ on the compact set $\mathcal{B}$ because $0 < Aq< 1$. And this is precisely our tetration existence theorem. Where upon $u_\lambda(w) : \mathcal{B} \to \mathbb{C}$, in which the functional equation $f_\lambda(e^{\lambda}w) + u_\lambda(e^{\lambda}w) = \varphi_\lambda(e^\lambda w) = \log\varphi_\lambda =  \log(f_\lambda + u_\lambda)$ extends this function almost everywhere.

 Now, the compact set $\mathcal{B}$ depends on $\delta$ and $\delta'$, and we can shrink $\delta$ or $\delta'$. The value $\delta$ controls our Lipschitz constant $A$ and our contracting constant $q$. And the value $\delta'$ controls how close we allow ourselves to the essential singularities at $w = -e^{-\lambda j}$. Our contracting constant $q$ can get as close as we want to $e^{-a}$; similarly with $A \to 1$. But additionally, so long as $e^{-\Re \lambda} \le e^{-a}$, we can alter $\delta$ and $\delta'$ uniformly in $\lambda$ in which $qA< 1$. And as such, the convergence is uniform in $\lambda$ as well as $w$, because $(\delta,\delta')$ can be chosen uniformly in $\lambda$.\\

\begin{theorem}[Tetration Existence Theorem]\label{thmTE}
For $(s,\lambda) \in \mathcal{L} \subset \mathbb{L}$ there exists a holomorphic tetration function $F_\lambda(s)$ such that,

\[
F_\lambda(s+1) = e^{F_\lambda(s)}\\
\]

Where $\mathbb{L}/\mathcal{L}$ is a measure zero set in $\mathbb{C}^2$.
\end{theorem}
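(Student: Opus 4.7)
The plan is to package up the analysis already carried out in the preceding paragraphs. The contraction-mapping argument on $\mathcal{B}_{\delta,\delta'}$ in $(w,\lambda)$-space established $u_\lambda^n \to u_\lambda$ uniformly; translating via $w = e^{-\lambda s}$ gives uniform convergence of $\tau_\lambda^n(s) \to \tau_\lambda(s)$ on the corresponding region of $\mathbb{L}$ where $\Re(\lambda s)$ is sufficiently large and the singularity lattice $\lambda(j-s) = (2k+1)\pi i$ is avoided. Since each $\tau_\lambda^n$ is holomorphic there by Theorem \ref{thmRMV}, Weierstrass' theorem yields joint holomorphy of the limit $\tau_\lambda$ in $(s,\lambda)$.

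I would then define $F_\lambda(s) := \beta_\lambda(s) + \tau_\lambda(s)$ on this initial region and pass to the limit in the recursion
\[
\tau_\lambda^{n+1}(s) = \log\bigl(\beta_\lambda(s+1) + \tau_\lambda^n(s+1)\bigr) - \beta_\lambda(s).
\]
Uniform convergence together with Theorem \ref{thmUNB}, which keeps the argument $\beta_\lambda(s+1) + \tau_\lambda^n(s+1)$ bounded away from $0$ for $\Re(\lambda s)$ large, permits commuting the limit with $\log$, giving $\beta_\lambda(s) + \tau_\lambda(s) = \log F_\lambda(s+1)$, that is $F_\lambda(s+1) = e^{F_\lambda(s)}$.

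Finally I would extend $F_\lambda$ leftward by iterating $F_\lambda(s) = \log F_\lambda(s+1)$. Each backward step is holomorphic except where some $F_\lambda(s+k) = 0$, together with a chosen branch cut emanating from that zero, or where $\beta_\lambda$ meets the original singularity lattice. Because $F_\lambda$ is asymptotic to $\beta_\lambda(s) \to \infty$ as $\Re(\lambda s) \to \infty$, the function $F_\lambda$ is not identically zero, so its zero set is a proper complex-analytic variety in $\mathbb{C}^2$; each of its countably many backward-shifted copies is again codimension-one, and the countable union has Lebesgue measure zero. The complement in $\mathbb{L}$ is the desired $\mathcal{L}$. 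The main obstacle I anticipate is verifying that the branch choices in the iterated logarithms can be made consistently on each connected component of $\mathcal{L}$, and that the measure-zero obstruction set really is a countable union of analytic hypersurfaces rather than accumulating pathologically; this is where the canonical principal-branch choice at infinity, legitimized by the asymptotic $F_\lambda \sim \beta_\lambda$, does the real work.
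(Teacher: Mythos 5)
Your proposal follows essentially the same route as the paper's proof: the Banach contraction argument on $\mathcal{B}_{\delta,\delta'}$ (Lipschitz constant $A\to1$ from $1/f_\lambda(w)=o(1)$ along the spiral, contraction constant $q$ with $qA<1$) gives uniform convergence of $u_\lambda^n=\tau_\lambda^n$, holomorphy in $(s,\lambda)$ follows from uniformity, and $F_\lambda$ is then extended by iterating $\tau_\lambda(s)=\log(\beta_\lambda(s+1)+\tau_\lambda(s+1))-\beta_\lambda(s)$, excluding branch points. If anything, your closing argument that the exceptional set is a countable union of analytic hypersurfaces, hence of measure zero, is more explicit than the paper's, which simply excludes ``branching points'' without further justification.
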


\begin{proof}
The correct way to observe this is a bit more tacit. This is the quick run through of everything we've done above. We start with the result,

\[
\Big{|}\log\big{(}\frac{f_\lambda(w) + w}{f_\lambda(w) + w'}\big{)}\Big{|} \le A|w - w'|\\
\]

Where the smaller $|w-w'| < \delta$ is, the closer $1 < A < 1+\epsilon$ shrinks to $\epsilon = 0$. Because $1/f_\lambda(w) = o(1)$ as $w \to 0^*$--*in a spiral--this is guaranteed. And so the recursive process we've derived,

\[
u_\lambda^{n}(w) = -\log(1+w) + o(w)\\
\]

Because the initial convergent is $u_\lambda^{1}(w) = -\log(1+w)$, and since the further iterates will converge to $u_\lambda^1(w)$ as $w\to 0^*$ we know $u_\lambda^n(w)/w \to -1^*$. Hence, $A \to 1$ at least. For the second half of the proof; parce que,

\[
u_\lambda^{n}(e^{-\lambda}w) = -\log(1+e^{-\lambda}w) + o(e^{-\lambda}w)\\
\]

And we can bound, for some $D \in \mathbb{R}^+$, $|u_\lambda^n(w)| \le w(1+D)$. So, we know that the operator,

\[
\mathcal{T} u_\lambda(w) = u_\lambda(e^{-\lambda}w)\\
\]

Is a contraction, subject to Banach's theorem; with a contraction constant $e^{-\Re\lambda} < q < 1$ for $q$ dependent on a compact set. This theorem can be applied uniformly. So choose a compact set $\mathcal{B} =\{(w,\lambda)\in\mathbb{C}^2\,|\,0<|w| \le \delta,\,|w+e^{-\lambda j}| \ge \delta'e^{-\Re \lambda j},\,j\ge1,\,0< a\le \Re\lambda \le b,\,c\le \Im \lambda \le d\}\cup\{0\}$, mais soi,

\[
||u_\lambda^{n}(e^{-\lambda}w) - u_\lambda^{n-1}(e^{-\lambda}w)||_{\mathcal{B}} \le q||u_\lambda^{n} - u_\lambda^{n-1}||_{\mathcal{B}}\\
\]

And from this, we can choose $e^{-a} < q < 1$ for $\delta>0$. Choose $\delta,\delta'$ such that $0< qA = q(1+\epsilon) < 1$, and then,

\begin{eqnarray*}
||u_\lambda^{n+1} - u_\lambda^n||_{\mathcal{B}} &\le& A||u_\lambda^n(e^{-\lambda}w) - u_\lambda^{n-1}(e^{-\lambda}w)||_{\mathcal{B}}\\
&<& qA||u_\lambda^{n} - u_\lambda^{n-1}||_{\mathcal{B}}\\
\end{eqnarray*}

Which concludes the convergence of $u_\lambda^n(w) = \tau_\lambda^n(s)$ by Banach's Fixed Point Theorem. The function $\tau_\lambda(s) = \log(\beta_\lambda(s+1) + \tau_\lambda(s+1)) - \beta_\lambda(s)$; upon which we can iterate this process to $s \in \mathbb{C}$ for all $\tau_\lambda(s)$, except for branching points. Additionally, this proof was done in a uniform manner so that we have local holomorphy in $\lambda$; giving us the theorem.
\end{proof}

To better explain the situation with $\lambda$, we suggest the reader look at John Milnor's \cite{Mil} treatment of the holomorphy of the Schr\"{o}der function in its multiplier. Which is a limit process described by a multiplier, where the result is holomorphic in the multiplier; which is what we have here. Except the multiplier is written $e^{-\lambda}$.

Milnor uses a similar summation argument--attributed as Koenig's Linearization Theorem; where the majorant for the sum is locally independent of $\lambda$. This is precisely the case here; a sum is bounded uniformly in $\lambda$--such $e^{-\Re \lambda} \le e^{-a} < q < 1$; and $a$ acts as a limiter for $\lambda$. And $a>0$ can be chosen with $\delta,\delta'$.

Now this theorem isn't exactly what we want. We want to take the limit $\lim_{n\to\infty} u_\lambda^n(w)$ as $\lambda$ depends on $w$. This is perfectly doable by the above analysis; since convergence is uniform in $\lambda$ and $w$. But we need $u_\lambda^n(w)$ to converge in a manner where $\lambda \to 0$ as $n\to\infty$; this is not covered by this theorem. We want a mapping; a nice enough function $\lambda \mapsto \lambda(s)$ to wash away all the problems. It has to tend to $0$ like $n^{-\epsilon}$ for $\epsilon > 0$ at least, and $\epsilon < 1$ at most.\\

\begin{figure}
  \centering
  \includegraphics[scale=.5]{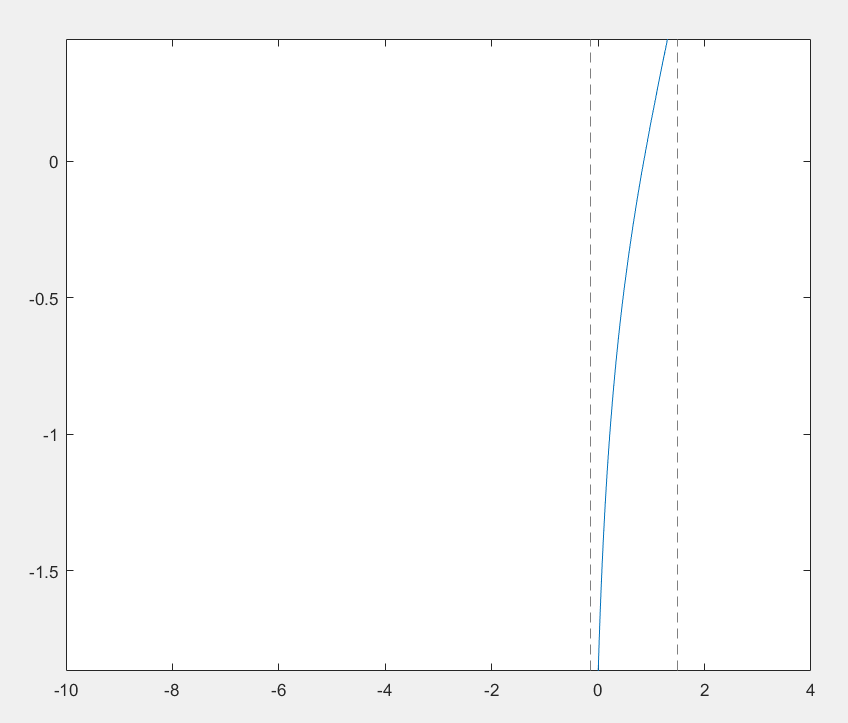}\quad
  \hfill
  \caption {The function $\log(\log(\log(\log(\beta_{log(2)}(x+4)))))$; where we can see it converging to a tetration function over $[-2,-1]$ minus a shifting argument. It's difficult to do this better without hitting overflow arguments.}
  \label{figTETLOG}
\end{figure}

\begin{figure}
    \centering
    \includegraphics[scale=.6]{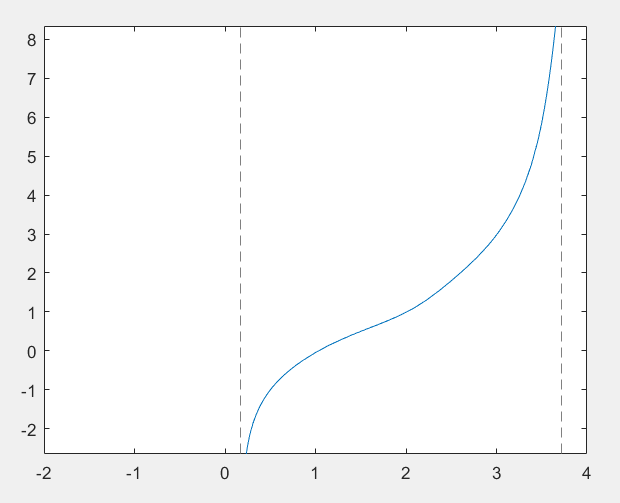}
    \caption{This is a better graph of what $F_{\log(2)}$ looks like using a sequential approach at defining the error term $\tau$.}
    \label{figTETSEQ}
\end{figure}

We'd like to spend some time visualizing this theorem. It may be difficult to see what's going on exactly. For that reason, it's helpful to use a geometric interpretation. But sadly, as we are dealing with iterated exponentials at infinity, overflow errors abound. The author still has no way of graphing this function without overflow errors. Which in many ways, is expected to happen--as we're pulling back a super-exponential function at $\infty$.

The function $f_\lambda(w)$ has essential singularities at the points $w = -e^{-\lambda j}$ accumulating at the essential singularity at $w=0$. However, if we spiral into this singularity like $e^{-\lambda n}w$; we have good control over it. Such that,

\[
u_\lambda^n(w) = \log^{\circ n}f_\lambda(e^{-\lambda n} w) - f_\lambda(w)\\
\]

Will have very good control, and we have convergence. If we choose to do this in another spiral, or in another arbitrary path to $0$; there's no guarantee as to what will happen. It may explode to infinity, or shrink astronomically to $0$. But, along this spiral all is good.\\

\begin{figure}
  \centering
  \includegraphics[scale=.5]{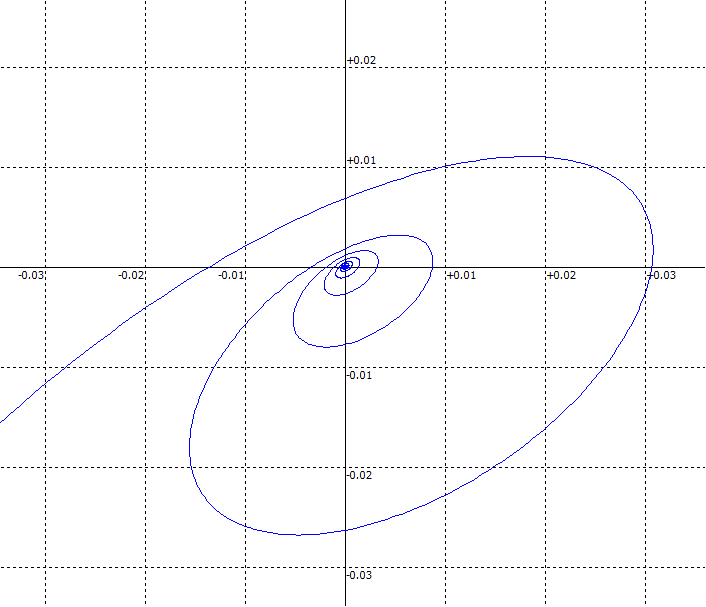}\quad
  \hfill
  \caption {A spiral in question, for $w=1/2 + 3/4i$ and $\lambda = (1+i)\log(2)$.}
  \label{fig5}
\end{figure}

\begin{figure}
  \centering
  \includegraphics[scale=.6]{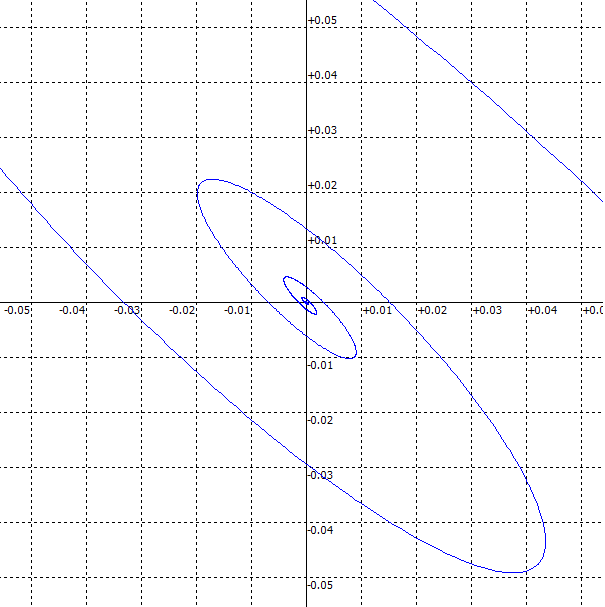}\quad
  \hfill
  \caption {A spiral in question, for $w=-1/3 + 1/5i$ and $\lambda = 1/2 + 2i$.}
  \label{fig6}
\end{figure}

The attached spirals in Figure \ref{fig5} and \ref{fig6} essentially describe what the general shape will look like. It's really nothing more than an odd shaped logarithmic spiral. And we want to trace along these spirals into $0$. Where in such manners $\log^{\circ n} f_\lambda(e^{-\lambda n}w) - f_\lambda(w) \to u_\lambda(w)$ as $n\to\infty$.

This works for two reasons. The first being that the operator $\mathcal{T} p(w) = p(e^{-\lambda} w)$ is a contraction mapping. The second being, $\frac{u(e^{-\lambda n}w)}{e^{-\lambda n}w} \to -1$ as $n\to\infty$--which is covered by The Removable Singularity Theorem \ref{thmRMV}. We can think of the spiral as a right-translation in our change of variables.

Additionally, along this spiral, our function $f_\lambda(w)$ satisfies a natural functional equation,

\[
f_\lambda(e^{-\lambda}w) = \frac{e^{f_\lambda(w)}}{1+w}\\
\]

Where, for other spirals--we are clueless as to how our function $f_\lambda$ behaves. All of this surgery means--along this spiral, we can artfully remove the singularity at $0$. And at $0$; our tetration functional equation gets closer and closer to being satisfied. So long as we pay attention to the other singularities which happen along the spiral $w = - e^{-\lambda n}$; and we make sure to stay somewhat away from these. Which is precisely what our compact set $\mathcal{B}$ does.

If we write,

\begin{eqnarray*}
\mathcal{B}_{\delta,\delta'} &=& \{(w,\lambda) \in \mathbb{C}^2\,|\,|w|\le\delta,\,|w+e^{-\lambda j}| \ge \delta'e^{-\Re \lambda j},\,j\ge1,\\
&&0<a \le \Re \lambda \le b,\,c\le \Im \lambda \le d\}\cup\{0\}\\
\end{eqnarray*}

Then $\mathcal{B}$ is chosen so that $(e^{-\lambda} w,\lambda) \in \mathcal{B}$ if $(w,\lambda) \in \mathcal{B}$. And so our spiral sits well in this set. We can visualize this in $w$ as a compact disk about $0$ of radius $\delta$--and each point $w = - e^{-\lambda j}$ has a small disk about it (with a radius $\delta'e^{-\Re \lambda j}$); in which these disks are excluded from the compact disk about $0$. So we can think of this as a disk with a bunch of smaller and smaller holes spiraling into $0$. Where the hole at $0$ is a removable singularity. Figure \ref{fig7} displays the general shape.\\

\begin{figure}
  \centering
  \includegraphics[scale=.6]{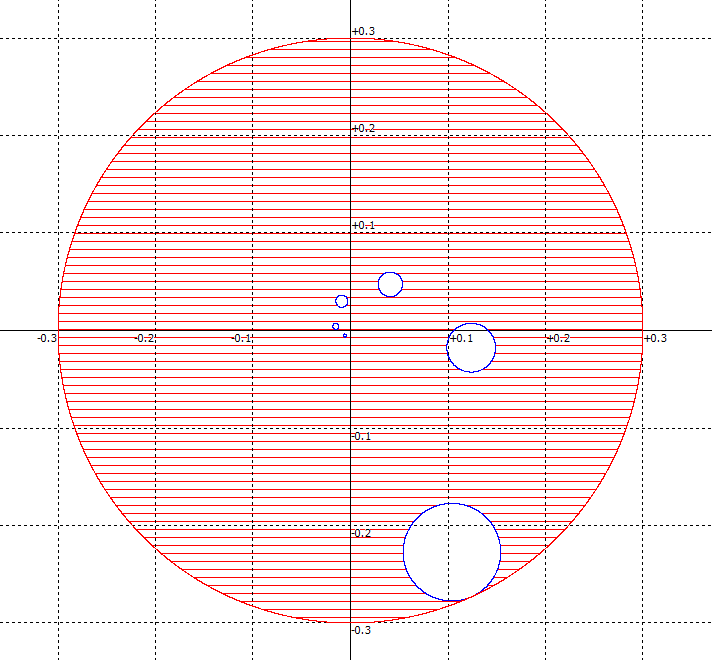}\quad
  \hfill
  \caption {The set $\mathcal{B}_{\delta,\delta'}$ for $\delta = 0.3$, $\delta' = 0.2$, $\lambda = \log(2) - i$. The shaded area is the domain $\mathcal{B}$. The singularities of $f_\lambda(w)$ occur in the center of each disk.}
  \label{fig7}
\end{figure}

\section{Finding an appropriate mapping; and generalizing Theorem \ref{thmTE}}\label{sec6}
\setcounter{equation}{0}

It's helpful to look at this problem from two different angles. The first being a mapping $\lambda(w) : \mathbb{D}^\times \to \Re(\lambda) > 0$ and the second being a mapping $\lambda: \mathbb{L}_1 \to \mathbb{L}_2$. Recall that $\mathbb{D}^\times = \{w \in \mathbb{C}\,|\,0 < |w| < 1,\,w \neq -e^{\lambda j},\,j \ge 1\}$ and $\mathbb{L} = \mathbb{L}_1 \times \mathbb{L}_2 = \{(s,\lambda) \in \mathbb{C}^2\,|\, \Re(\lambda) > 0,\,\lambda(j-s)\neq (2k+1)\pi i,\,j\ge1,\,k\in\mathbb{Z}\}$.

In the second form, we want a mapping $\lambda : \{s \in \mathbb{C}\,|\, |\arg(s)| < \theta < \pi/2 \} \to \mathbb{L}_2$ in which $\lambda : \mathbb{R}^+ \to \mathbb{R}^+$. We want to convert this into a restriction on $\lambda(w)$. The answer to this riddle isn't too difficult.

To find an appropriate mapping, we just want a mapping that expands the lines where,

\[
\lambda(j-s) = (2k+1)\pi i\\
\]

This can be done with the function $\sqrt{s}$, and many functions like this will work, but $\sqrt{s}$ is simple enough. To visualize this, when $\lambda$ is constant we have a lattice of points in the right half plane. And if we were to multiply that lattice by $\frac{1}{\sqrt{1+s}}$ we'd be able to place a sector $|\arg(s)| < \theta$ within it. Or rather,

\[
\frac{j-s}{\sqrt{1+s}} = (2k+1)\pi i\\
\]

Then,

\[
s = j + \sqrt{1+s}2(k+1)\pi i
\]

And there's a sector $|\arg(s)| < \theta < \pi/2$ in which this can't be true for $j \ge 1$. So with this we're going to consider the function,

\[
\beta(s) = \OmSum_{j=1}^\infty \frac{e^z}{e^{\frac{j-s}{\sqrt{1+s}}} + 1}\,\bullet z\\
\]

And it's alternate form on $\mathbb{D}^\times$,

\[
f(w) = \OmSum_{j=1}^\infty \frac{e^z}{e^{\frac{j}{\sqrt{1+s}}} w +1}\bullet z\\
\]

But,

\begin{eqnarray*}
w &=& e^{-\lambda s}\\
s &=& -\log(w)/\lambda\\
s &=& -\frac{\log(w)}{\sqrt{1+s}}\\
\end{eqnarray*}

So that $h(w) = \frac{1}{\sqrt{1+s}}$ for some holomorphic function $h$,

\[
f(w) = \OmSum_{j=1}^\infty \frac{e^z}{e^{h(w)j} w +1}\bullet z\\
\]

Now, this expression looks very cryptic. But we know that $u_\lambda^n(w)$ as $n\to\infty$ converges uniformly for $(w,\lambda) \in \mathcal{B}_{\delta,\delta'}$--where this is a compact set. When we translate this back into $\beta_\lambda(s)$; where $\lambda = \frac{1}{\sqrt{1+s}}$ we get that, first of all $\beta(s) : \{|\arg(s)| < \theta < \pi/2\} \to \mathbb{C}$; and secondly that,

\[
\tau^n(s) = \log^{\circ n}\beta(s+n) - \beta(s) \to 0\\
\]

Should converge uniformly for $|\arg(s)| < \theta < \pi/2$ as $|s|\to\infty$ at least like $\mathcal{O}(|s|^{-1/2})$. As such we still have our identity $\log \beta(\infty) - \beta(\infty) = 0$. Where this again equates to a compact set in $\widehat{\mathbb{L}} = \mathbb{L}\cup\{-\infty,\infty\}$.

If we think of our spirals $w_n = e^{-\lambda n} w$; we are modifying the spirals with the equivalence $\lambda = h(w)$; and letting $\lambda \to 0$ as $w_n \to 0$. This must be done very carefully, but can be done using nothing more than Banach's Fixed Point Theorem. As we have control over the multiplier, it's very possible.\\

This will imply we have a tetration function $F$ which satisfies,

\[
F(s+1) = e^{F(s)}\\
\]

For all $|\arg(s)| < \theta$. But then, for every $s \in \mathbb{C}$ there exists some $n$ such that $s + n$ is in this sector, therefore we can undo this by taking $n$ logarithms. This will define our tetration function almost everywhere--excluding logarithmic branch-cuts/singularities. Since it will be real valued, we know that there is a real value $x_0 \in \mathbb{R}$ such that,

\begin{eqnarray*}
\tet(s) &=& F(s+x_0) : (-2,\infty) \to \mathbb{R}\,\,\text{bijectively}\,\,\\
\tet(0) &=& 1\\
\end{eqnarray*}

This gives us a tetration function which definitely has singularities at the negative integers and isn't holomorphic on the line $(-\infty,-2]$. But there may be other singularities which appear elsewhere, it is our job to show this doesn't happen; which we'll do in the next section.\\

Before the dramatic conclusion, we have to take a closer look at $\tau_\lambda^n$. As you'll note, for a varying $\lambda$, $\tau_\lambda = \lim_{n\to\infty} \tau_\lambda^n$ is holomorphic; but in the case of interest, we have a sequence of $\lambda_n \to 0$ as $n\to\infty$ and $0$ is on the boundary of our domain $\mathbb{L}$. So we have to use that $\lambda_n = \mathcal{O}(n^{-1/2})$ somewhere in our construction, and show that the limit is still holomorphic in $s$. 

This is a tricky job. And of it we'll show in the following theorem.

\begin{theorem}[The Pasted Together Theorem]\label{thmPST}
The function $F_n \to F$ is holomorphic on $|\arg(s)| < \theta$; and convergence is uniform on compact sets of this.
\end{theorem}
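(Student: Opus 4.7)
The plan is to show that $(F_n)$ is Cauchy uniformly on compact subsets $\mathcal{K}$ of the sector $\{|\arg s| < \theta\}$, then invoke Weierstrass's theorem to transfer holomorphy to the limit $F = \lim F_n$. Writing $\mu_n := \lambda(s+n) = 1/\sqrt{1+s+n}$, I would first argue that each $F_n$ is already holomorphic on the sector for $n$ large: $(s+n, \mu_n) \in \mathbb{L}$ by the choice of Riemann mapping from Section \ref{sec4}, so $\beta_{\mu_n}(s+n)$ is holomorphic by Theorem \ref{thmFAM}, and its modulus is large enough by Theorem \ref{thmUNB} for the $n$ iterated logarithms to be single-valued.

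To set up the Cauchy estimate I would peel off one logarithm via the functional equation $\log\beta_{\mu_{n+1}}(s+n+1) = \beta_{\mu_{n+1}}(s+n) - \log(1+e^{-\mu_{n+1}(s+n)})$, obtaining
\[
F_{n+1}(s) - F_n(s) = \log^{\circ n}\bigl(\beta_{\mu_n}(s+n) + E_n\bigr) - \log^{\circ n}\beta_{\mu_n}(s+n),
\]
with $E_n = [\beta_{\mu_{n+1}} - \beta_{\mu_n}](s+n) - \log(1+e^{-\mu_{n+1}(s+n)})$. The second piece of $E_n$ is $O(e^{-c\sqrt{n}})$ on $\mathcal{K}$ because $\mu_{n+1}(s+n) \asymp \sqrt{n}$ there. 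For the first, a Cauchy estimate on a disk $|\mu - \mu_n| \le r \asymp n^{-1/2}$ (shrunk to stay inside $\mathbb{L}$) combined with $|\mu_{n+1} - \mu_n| = O(n^{-3/2})$ bounds it by $n^{-1}$ times the supremum of $|\beta_\mu(s+n)|$ on that disk --- a tower-large quantity.

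To absorb the tower I would apply the chain-rule bound $|\log^{\circ n}(A+E) - \log^{\circ n}(A)| \lesssim |E|/P_n(A)$, with $P_n(A) = A \cdot \log A \cdots \log^{\circ(n-1)}A$, valid whenever $|E|$ is dominated by every iterated logarithm of $A$. Taking $A = \beta_{\mu_n}(s+n)$, Theorems \ref{thmRMV} and \ref{thmUNB} identify $\log^{\circ k}A$ with $\beta_{\mu_n}(s+n-k)$ up to exponentially small errors, so $P_n(A) \asymp \prod_{k=0}^{n-1}\beta_{\mu_n}(s+n-k)$ --- precisely the product appearing in the normality bound of Theorem \ref{thmNRMINF}. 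The final factor $\beta_{\mu_n}(s+n)$ alone absorbs the tower-large supremum from the previous paragraph, while the remaining factors give a super-geometrically small bound on $|F_{n+1}(s) - F_n(s)|$, summable uniformly on $\mathcal{K}$. Weierstrass then delivers holomorphy of $F$.

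The hard part will be the balancing act in the $\mu$-derivative estimate: the Cauchy radius $r \asymp n^{-1/2}$ must simultaneously avoid the removed lines $\mu(j-s-n) = (2k+1)\pi i$, stay inside $\{\Re\mu > 0\}$, and furnish a supremum not so large that it swamps the $P_n$ damping. The specific rate $\lambda(s) = 1/\sqrt{1+s}$ --- tending to $0$ like $n^{-1/2}$, no faster and no slower --- is what makes this balance close; a faster decay would leave too little room inside $\mathbb{L}$ for the Cauchy disk, while a slower decay would keep $e^{-\mu_n(s+n)}$ from decaying. Everything else is the standard uniform-limit-of-holomorphic-functions package.
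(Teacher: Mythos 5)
You have identified the right objects---the telescoping decomposition $F_{n+1}-F_n=\log^{\circ n}\bigl(\beta_{\mu_n}(s+n)+E_n\bigr)-\log^{\circ n}\beta_{\mu_n}(s+n)$ and the damping product $P_n(A)\approx\prod_{j=1}^{n}\beta_{\mu_n}(s+j)$ coming from the iterated chain rule---but the pivotal estimate on the first piece of $E_n$, namely $\beta_{\mu_{n+1}}(s+n)-\beta_{\mu_n}(s+n)$, has a genuine gap. A Cauchy estimate over a $\mu$-disk of radius $r\asymp n^{-1/2}$ needs the supremum of $|\beta_\mu(s+n)|$ over that disk, and you assert that the single factor $\beta_{\mu_n}(s+n)$ in $P_n(A)$ absorbs it. That is exactly where the argument breaks: $\beta_\mu(s+n)$ is an exponential tower of height $\approx n$ whose $\mu$-dependence enters near the bottom of the tower, so a parameter displacement of size $n^{-1/2}$ is amplified super-exponentially at each of the $\approx n$ subsequent exponentiations. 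The supremum over the disk is therefore not comparable to $\beta_{\mu_n}(s+n)$, nor even to the full product $P_n(A)$: already the first-order sensitivity $\partial_\mu\beta_\mu(s+n)$ is of size $\approx\prod_{j\le n}\beta_{\mu_n}(s+j)$ (the same order as your damping), and over a disk of radius $n^{-1/2}$ the linearization is invalid, so the Cauchy supremum exceeds the damping by a further tower-sized factor. To salvage your route you would need a recursive statement that the perturbation stays small at every level of the tower (control of $\beta_{\mu_{n+1}}(s+k)-\beta_{\mu_n}(s+k)$ for all $k\le n$ simultaneously); a single top-level Cauchy estimate cannot deliver that, and the claimed super-geometric smallness of $|F_{n+1}-F_n|$ does not follow as written.

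This is also precisely where your approach departs from the paper's. The paper never compares two members of the $\beta$-family at tower-height arguments. It stays in the $w$-coordinate with the uniformly small error terms $u_\lambda^n(w)=-\log(1+w)+o(w)$ and re-runs the contraction argument of Theorem \ref{thmTE} with $n$-dependent constants: with $\Re\lambda\asymp n^{-1/2}$ one keeps a contraction constant $q_n$ and a Lipschitz constant $A_n$ satisfying $A_nq_n=e^{-(1-\mu)\mathcal{O}(n^{-1/2})}$, so the telescoping differences obey $\|u_\lambda^{n+1}-u_\lambda^n\|_{\mathcal{B}}\le e^{-Dn^{1/2}}\,\|\log(1+w)\|_{\mathcal{B}}$, which is summable, giving the uniform Cauchy property and holomorphy of the limit. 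In other words, the variation of the parameter is absorbed into the contraction constants acting on quantities that are uniformly $\mathcal{O}(w)$, rather than estimated by differentiating $\beta$ in $\lambda$ at a tower-sized argument---and that is what sidesteps the parameter-sensitivity problem your proposal runs into.
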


\begin{proof}
The goal of this proof is to control the Lipschitz constant $A$ and the contraction constant $q$ from The Tetration Existence Theorem \ref{thmTE}. Now, when we shrink $\delta$ from our compact set we know that $q \to e^{-\Re\lambda}$ and that $A \to 1$, which ensures convergence. In the current situation, we know that $\Re(\lambda) \to 0$ like $\mathcal{O}(n^{-1/2})$. This implies that $\prod_{j=1}^n e^{-\Re \lambda} \to 0$ because $\sum_{j=1}^n \mathcal{O}(j^{-1/2}) \to \infty$ as $n\to\infty$, and this will be $-\infty$ because $\Re \lambda > 0$. This implies the product of our contractions looks like $e^{-Bn^{1/2}}$ for some $B>0$ which is enough to ensure convergence of the final result if we can keep $q \to e^{-\Re \lambda}$ well enough, which we can because $\delta,\delta'$ can be chosen uniformly for $\lambda$.

But in order for this work, we need $A \to 1$ in a fast enough manner to not affect this convergence. We need that $A \to 1$ like $e^{\mu n^{-1/2}}$ for $0 < \mu < 1$. Now we know we can choose $A$ such that $qA < 1$, and this choice can be done uniformly. This implies that $A \le e^{\mu \Re \lambda}$ for $0 \le \mu < 1$. 

Choose an appropriate $\delta$ and $\delta'$ such in,

\begin{eqnarray*}
\mathcal{B}_{\delta,\delta'}^n &=& \{(w,\lambda) \in \mathbb{C}^2\,|\,|w|\le\delta,\,|w + e^{-\lambda j}| \ge \delta'e^{-\Re \lambda j},\,j\ge 1,\\
\,&&0 < an^{-1/2} \le \Re\lambda \le b,\,c\le \Im \lambda \le d\}\\
\end{eqnarray*}

Our constants $q_nA_n < 1$ and $q_nA_n \to 1$ as $n\to\infty$. Note, that the compact set depends on $n$. The set,

\begin{eqnarray*}
\mathcal{B}^n \to \mathcal{U} &=& \{(w,\lambda) \in \mathbb{C}^2\,|\,|w|\le\delta,\,|w + e^{-\lambda j}| \ge \delta'e^{-\Re \lambda j},\,j\ge 1,\\
&&\,0 \le \Re\lambda \le b,\,c\le \Im \lambda \le d\}\\
\end{eqnarray*}

But by the time we hit $\mathcal{U}$, we'll be at $u^1 - u^0 = \log(1+w)$ in the inverse iteration; which has a finite value. Now, the value $\lambda$ depends on $w$; and so we can project this into a set,

\[
\mathcal{B} = \{w \in \mathbb{C}\,|\,|w|\le \delta,\,|w+e^{-\lambda j}| \ge \delta'e^{-\Re \lambda j},\,j\ge1\}\\
\]

And then, to work through this, we perform the iteration,

\begin{eqnarray*}
||u_\lambda^{n+1} - u_\lambda^n||_{\mathcal{B}} &\le& A_nq_n||u_\lambda^n - u_\lambda^{n-1}||_{\mathcal{B}}\\
&\le& A_nq_n A_{n-1} q_{n-1} ||u_\lambda^{n-1} - u_\lambda^{n-2}||_{\mathcal{B}}\\
&\vdots&\\
&\le& \prod_{j=1}^n A_jq_j ||u_\lambda^1 - u_\lambda^0||_{\mathcal{B}}\\
\end{eqnarray*}

And here,

\[
A_nq_n = e^{-(1-\mu)\mathcal{O}(n^{-1/2})}\\
\]

And we are finding such sequences, that,

\[
\prod_{j=1}^n A_jq_j = e^{-(1-\mu)\sum_{j=1}^n \mathcal{O}(j^{-1/2})} = e^{-\mathcal{O}(n^{1/2})} = e^{-Dn^{1/2}}\\
\]

For a constant $D=(1-\mu)B >0$. Therefore by induction,

\[
||u_\lambda^{n+1} - u_\lambda^n||_{\mathcal{B}} \le e^{-Dn^{1/2}} ||\log(1+w)||_{\mathcal{B}} = Me^{-Dn^{1/2}}\\
\]

Wherefore, $-\log(1+w) = u_\lambda^1 - u_\lambda^0$. For some constant $M = ||\log(1+w)||_{\mathcal{B}} = \log(1-\delta)$. Now, for all $\epsilon > 0$ there exists $N$ such for $n,m > N$ it's known,

\[
\sum_{j=n}^{m-1} e^{-Dj^{1/2}} < \epsilon/M\\
\]

And therefore,

\[
||u_\lambda^m - u_\lambda^n||_{\mathcal{B}} \le \sum_{k=n}^{m-1} ||u_\lambda^{k+1} - u_\lambda^k||_{\mathcal{B}} < \epsilon\\
\]

Which concludes the proof.
\end{proof}

We've written this theorem very quickly because we've built The Tetration Existence Theorem \ref{thmTE} to allow for a quick generalization. All that's needed to go from that theorem to this theorem is control over the Lipschitz constant $A$ and the contraction constant $q$--which we can.

This argument works exactly the same if $\lambda = \mathcal{O}(n^{-\epsilon})$ for $1>\epsilon > 0$. Further, Banach's fixed point theorem will ensure that these two processes will both converge to the same $F$. In many ways, if we take our function,

\[
F_\lambda(s) = \lim_{n\to\infty}\log^{\circ n}\beta_\lambda(s+n)\\
\]

From The Tetration Existence Theorem \ref{thmTE}; then the function $F$ we've constructed in The Pasted Together Theorem \ref{thmPST}, is essentially $F_\lambda(s) \Big{|}_{\lambda = 0}$. This is to mean, it's the boundary value of our function $F_\lambda$. However, we have to take this limit in a specific manner--not just plugging in $\lambda = 0$.

This is to imply, our constructed function is,

\[
F(s) = \lim_{n\to\infty}\lim_{\lambda \to 0} \log^{\circ n} \beta_\lambda(s+n)\,\,\text{where}\,\,\lambda = \mathcal{O}(n^{-\epsilon})\,\,\text{for}\,\,0 < \epsilon < 1\\
\]

Where, this is the gist of the construction, but obviously requires much more depth to be valid.

\section{$\tet$ is non-zero in the upper half-plane}\label{sec7}
\setcounter{equation}{0}

This section is devoted to showing that $\tet(s)$ is non-zero for $s \in \mathbb{H} = \{s \in \mathbb{C}\,|\,\Im(s) > 0\}$. This can be equivalently said, that the only zero of $\tet(s)$ is at $s = -1$. This equates to there being no singularities for $\Im(s) > 0$. 

The only way a singularity arises is if $\log \tet(s_0+1)$ is singular; which implies $\tet(s_0+1) = \infty,0$. Where we know for large enough $N$ for all $n>N$ the values $\tet(s_0 + n)$ are non-singular. Therein, the only thing that can start this chain of singularities is if $\tet(s_0 + k) = 0$ for some $k$. Upon which $\tet(s_0 +k-j) = \infty$ for all $j \ge 1$. 

So we need a theorem that $\tet$ is non-zero in the upper-half plane, and we've simultaneously showed that $\tet$ is holomorphic in the upper half-plane. By conjugation, we'll know that $\tet$ is holomorphic in the lower half-plane. And then $\tet : \mathbb{C}/(-\infty,-2] \to \mathbb{C}$ will be our maximal domain of holomorphy for $\tet$.

To do this, we need to understand what happens when $\log(\tet(s_0))= 0$. It can only happen if $\tet(s_0) = 1$, but it doesn't necessarily happen if $\tet(s_0) = 1$. There exists a curve $C$ in a neighborhood of $s_0$ in which $\tet(C) \in \mathbb{R}$. Now, supposing that $C = s_0 + t$ for $t \in (-\delta,\delta)$ then when we continue to iterate this procedure, necessarily $\tet(s_0 + t)$ will be real-valued as $t$ grows. This will force $\tet(s_0-1) = 0$. Assuming that $C$ is not a line, then the line $\tet(s_0 +t)$ is not real-valued for $t \in (-\delta,\delta)$ excepting at $t=0$. This means the logarithm $\log(\tet(s_0 + t))$ must be complex valued, and this means that $\tet(s_0 - 1)$ must be in a neighborhood of $2\pi i k$ for $k\neq 0$, and it is non-zero.

So all we have to do is focus our attention on $\tet(s_0 + t)$ for $t \in (-\delta,\delta)$ and show that it cannot be real-valued.

\begin{lemma}[The Non-real Lemma]\label{lmaNR}
For all $s_0 \in \mathbb{H}$ such that $\tet(s_0) \in \mathbb{R}$ there exists $\delta>0$ such for $t \in (-\delta,\delta)$ and $t\neq 0$ the values $\tet(s_0 + t) \not \in \mathbb{R}$.
\end{lemma}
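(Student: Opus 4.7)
The plan is to suppose, for contradiction, that $\tet(s_0 + t) \in \mathbb{R}$ for $t$ in a real interval about $0$, then parlay this into a purely imaginary period for $\tet$, and finally cascade the functional equation backward along an interior horizontal line until an iterated-logarithm identity becomes impossible.

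First, I will turn the hypothesized failure of the lemma into an interval of real values. If the lemma fails, there is a sequence $t_n \in \mathbb{R}\setminus\{0\}$ with $t_n \to 0$ and $\tet(s_0 + t_n) \in \mathbb{R}$; since $t \mapsto \Im\,\tet(s_0 + t)$ is real-analytic on a real neighborhood of $0$ and vanishes on an accumulating sequence, it vanishes identically on some interval $(-\delta,\delta)$. Viewing $g(t) := \tet(s_0+t)$ as a holomorphic function of complex $t$ in a small disk, Schwarz reflection gives $g(\bar t) = \overline{g(t)}$; combined with the baseline symmetry $\overline{\tet(z)} = \tet(\bar z)$ (from $\tet : (-2,\infty) \to \mathbb{R}$), this rewrites as $\tet(s_0 + w) = \tet(\bar s_0 + w)$ in a neighborhood of $w=0$. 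The identity theorem then promotes this to $\tet(z) = \tet(z - 2i\,\Im(s_0))$ throughout the domain, so $\tet$ acquires a nontrivial purely imaginary period $iP$ with $P := 2\,\Im(s_0) > 0$.

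Next, I will combine this period with the conjugation symmetry to pin down a second horizontal line of real values. For real $x$, the computation
\[
\tet(x + iP/2) \;=\; \overline{\tet(x - iP/2)} \;=\; \overline{\tet(x + iP/2)}
\]
(first equality: conjugation; second: $\tet(x - iP/2) = \tet(x - iP/2 + iP) = \tet(x + iP/2)$ by the period) forces $\tet$ to be real along the interior line $\{\Im z = P/2\} \subset \mathbb{H}$. Finally, I will kill this via iterated logarithms. Let $y_0 := \tet(x + iP/2) \in \mathbb{R}$. Holomorphic continuation identifies $\tet(x + iP/2 - n) = \log^{\circ n}(y_0)$ for a uniquely determined branch at each step; since every such value again sits on $\{\Im z = P/2\}$ and must therefore be real, the only admissible branches are the principal log applied to strictly positive reals (a negative or zero intermediate value would yield $\log(\cdot) = \log|\cdot| + i\pi$ or $-\infty$, neither real). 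Hence $\log^{\circ k}(y_0) > 0$ for every $k \ge 0$, which forces $y_0$ to exceed every finite tower $e^{e^{\cdot^{\cdot^{e}}}}$: impossible for a finite real $y_0$. This contradicts the existence of the purely imaginary period, and hence the horizontal interval of real values we started from.

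The main obstacle is the middle step: I need that $\tet$ really extends to and stays holomorphic along the interior line $\{\Im z = P/2\}$ and along the entire backward orbit $\{x + iP/2 - n : n \ge 0\}$. This is manageable because $P/2 > 0$ keeps the backward orbit firmly inside $\mathbb{H}$, away from the slit $(-\infty,-2]$; the period $iP$ propagates $\tet$ by identity-theorem translation across any single horizontal strip; and holomorphy of $\tet$ uniquely determines the branch of $\log$ at each backward step, so the branch bookkeeping in the identification $\tet(z-1) = \log\tet(z)$ is automatic once we have verified that realness is preserved.
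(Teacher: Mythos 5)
Your argument is internally coherent, but it rests on properties of $\tet$ that, at this point in the paper, are not yet available---and are in fact exactly what this lemma is used to establish. The reflection symmetry $\overline{\tet(z)}=\tet(\bar z)$ applied at points of the lower half-plane, the identity-theorem promotion of $\tet(s_0+w)=\tet(\bar s_0+w)$ to the period $\tet(z)=\tet(z-2i\,\Im(s_0))$ on $\mathbb{C}$ minus two horizontal slits, the realness of $\tet$ along the entire line $\Im z = P/2$, and the holomorphy of $\tet$ along the whole backward orbit $x+iP/2-n$ all presuppose that $\tet$ is singularity-free throughout $\mathbb{H}$ (and, by conjugation, in the lower half-plane). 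But Section \ref{sec7} exists precisely to prove that: the Non-real Lemma feeds The Non-zero Theorem \ref{thmNZ}, which rules out zeros of $\tet$ in $\mathbb{H}$ and hence the logarithmic singularities that would otherwise appear under backward continuation; only afterwards does the paper conclude holomorphy on $\mathbb{C}/(-\infty,-2]$ and obtain the lower half-plane by reflection. Using that completed domain statement inside the proof of this lemma is circular. Concretely, if $\tet$ had a zero (hence a branch point one step back) somewhere in the region you need to connect the disk at $s_0$ to the line $\Im z = P/2$, or on that line itself, both your identity-theorem propagation and your backward iteration break down, and nothing proved up to this point excludes that possibility.

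The paper's own proof avoids this by only moving forward, where holomorphy is never at risk: if $\tet$ were real on a segment $s_0+(-\delta,\delta)$, then by $\tet(s+1)=e^{\tet(s)}$ it is real on every translate $s_0+n+(-\delta,\delta)$, and for large $n$ these segments lie in the sector $|\arg(s)|<\theta$ where $F_n\to F$ was constructed and $F$ is asymptotic to $\beta$, which is real on no horizontal line off the real axis; this yields the contradiction without ever invoking the global domain of $\tet$. As a standalone fact about a function already known to be holomorphic on $\mathbb{C}/(-\infty,-2]$, real and bijective on $(-2,\infty)$, and satisfying the functional equation, your period-plus-iterated-logarithm argument is correct and rather elegant (the step forcing $y_0$ above every finite exponential tower is sound); but as a proof of Lemma \ref{lmaNR} in the logical order of this paper it assumes the conclusion of the section, so it cannot replace the asymptotic comparison with $\beta$.
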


\begin{proof}
Take our asymptotic solution to tetration,

\[
\beta(s) = \OmSum_{j=1}^\infty \frac{e^z}{e^{\frac{j-s}{\sqrt{1+s}}} + 1}\,\bullet z\\ 
\]

And note there are no lines $s_0 + t$ for $t \in \mathbb{R}$ in which this is real-valued, excepting when $s_0 \in \mathbb{R}$. Therefore, the limiting process,

\[
F_n(s) = \log^{\circ n}\beta(s+n)\\
\]

Cannot be real-valued on a line $s_0 + t$, unless $s_0 \in \mathbb{R}$; because this looks like $\beta(s_0 + t)$ for large $t$. Therefore the result.
\end{proof}

To justify the following theorem further we need only to add a small point. Let $y(z)$ be a holomorphic function such that $y(1) = 1$ and $y(0) = 0$. If we take the principal branch of the logarithm $\log : \mathbb{C}/\mathbb{R}_{x\le0} \to \mathbb{C}$, and we know $\log y(z)$ has a branch cut along $z \in \mathbb{R}_{x\le0}$, then necessarily $y(x) \in \mathbb{R}^+$ for $x\in(0,1)$. Which is nothing more than a mapping theorem on the $\log$ function.

\begin{lemma}\label{lmalog}
Suppose $y(z) : \mathbb{D}\to\mathbb{C}$ is a holomorphic function with $y(0)= 0$, and $y(1^-) = 1$. Suppose, using the principal branch of the logarithm,

\[
\log(y(z)) : \mathbb{D}/(-1,0]\to\mathbb{C}\\
\]

Then,

\[
y : [0,1] \to [0,1]\\
\]
\end{lemma}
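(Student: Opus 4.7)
My approach is Schwarz reflection combined with the branch-cut geometry of $\log y$. First I would show $y$ is real on $(-1,1)$ and strictly positive on $(0,1)$. The hypothesis that the principal branch of $\log y$ is holomorphic exactly on $\mathbb{D} \setminus (-1,0]$ has two consequences: (i) $y(z) \notin (-\infty,0]$ for $z \in \mathbb{D} \setminus (-1,0]$, and (ii) the cut is genuine, so that as $z$ approaches $(-1,0]$ from the upper half-disk $\Im \log y(z) \to \pi$, while from the lower it tends to $-\pi$. Continuity combined with $y(0)=0$ then forces $y((-1,0]) \subseteq (-\infty,0]$. Schwarz reflection across the real interval $(-1,0)$ gives $y(\bar z) = \overline{y(z)}$ on a neighborhood of that interval in $\mathbb{D}$, and by analytic continuation on the connected disk this symmetry extends to all of $\mathbb{D}$. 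Hence $y$ is real on $(-1,1)$, and combining with (i), for $t \in (0,1)$ we get $y(t) \in \mathbb{R} \setminus (-\infty,0] = (0,\infty)$. Together with the endpoints $y(0)=0$ and $y(1^-)=1$, this places $y([0,1])$ inside $[0,\infty)$.

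The remaining and hardest step is the upper bound $y(t) \le 1$. The cleanest route is through univalence of $y$, which the tetration construction in the paper actually supplies: a real-valued holomorphic univalent function on $(-1,1)$ with $y(0)=0$ and $y(1^-)=1$ is strictly monotone there, and monotone plus the two endpoint values forces it to be strictly increasing on $[0,1]$, hence mapping $[0,1]$ bijectively onto $[0,1]$. An alternative, purely function-theoretic route uses the boundary data of $\log y$ on the upper half of $\mathbb{D}$---namely $\Im \log y \to \pi$ on $(-1,0)$, $\Im \log y \in (-\pi,\pi)$ elsewhere, and $\log y(z) \to 0$ as $z \to 1^-$---together with a Phragm\'{e}n--Lindel\"{o}f / maximum-principle argument on this region to force $\Re \log y(t) \le 0$ on $(0,1)$, i.e., $y(t) \in (0,1]$.

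The main obstacle is precisely this upper bound. Schwarz reflection delivers reality and positivity on $[0,1]$ essentially for free from the branch-cut hypothesis, but pinning $y(t)$ below $1$ requires either importing monotonicity from univalence (as the application provides) or running a maximum-principle argument against the branch-cut boundary data of $\log y$. Everything else---reality on $(-1,1)$, positivity on $(0,1)$, and the endpoint values---flows directly from the hypothesis once the reflection symmetry across $(-1,0)$ is in place.
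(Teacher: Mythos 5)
Your first paragraph is exactly the paper's argument, just written out with more care: the paper's proof consists of the two observations that the branch cut forces $y:(-1,0)\to-\mathbb{R}^{+}$ and that, once real-valued on a segment, $y$ stays real on the real axis, whence $y$ is real and (off the cut) positive on $(0,1)$. Where you diverge is on the upper bound $y(t)\le 1$: the paper does not argue this at all --- it simply asserts ``therefore $y:(0,1)\to(0,1)$'' --- so you are right to single it out as the step that does not follow from the stated hypotheses, and your proposed repairs (importing univalence of $y$, or a Phragm\'{e}n--Lindel\"{o}f argument on $\log y$) are additions not present in, and not needed by, the paper: in the only place the lemma is used (The Non-zero Theorem \ref{thmNZ}), the conclusion actually invoked is merely that $\tet(s+s_0-1)$ is real-valued on $(0,\delta)$, which is the reality/positivity statement you and the paper both establish. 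Note also that univalence is not a hypothesis of the lemma, so if you want the literal conclusion $y:[0,1]\to[0,1]$ you would have to either add that hypothesis or actually carry out the maximum-principle argument, which you only sketch; as a review of the lemma as stated, your positivity argument is complete and matches the paper, and the residual doubt you express concerns a claim the paper itself leaves unproved.
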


\begin{proof}
Immediately, we must see that $y:(-1,0) \to -\mathbb{R}^+$; as this is where the branch-cut is located. Therefore $y:(0,1) \to (0,1)$; because once a holomorphic function is real-valued it will still be real-valued.
\end{proof}

With this, we state the following theorem, which will only require a quick justification.

\begin{theorem}[The Non-zero Theorem]\label{thmNZ}
The tetration function $\tet(s) \neq 0$ for $\Im(s) > 0$.
\end{theorem}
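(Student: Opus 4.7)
The plan is to argue by contradiction: assume $\tet(s_0) = 0$ for some $s_0 \in \mathbb{H}$, and derive a violation of Lemma \ref{lmaNR}. By the functional equation $\tet(s_0+1) = e^{\tet(s_0)} = 1$, so $\tet$ takes the two real values $0$ and $1$ at the adjacent points $s_0$ and $s_0+1$. The goal is to apply Lemma \ref{lmalog} to show that $\tet$ must be real-valued on an entire arc joining $s_0$ to $s_0+1$, and then use Lemma \ref{lmaNR} together with the asymptotic structure of $\tet$ to contradict this.

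First I set up the conformal picture. Since $\tet$ is holomorphic on a neighborhood of the segment $[s_0, s_0+1] \subset \mathbb{H}$ with its only zero at $s_0$ (assume a simple zero; higher-order zeros are handled with obvious modifications), I pick a simply connected $U \subset \mathbb{H}$ containing that segment on which $\tet$ is holomorphic with unique zero $s_0$. Because $\tet'(s_0) \neq 0$, the set $\tet^{-1}(\mathbb{R}_{\leq 0}) \cap U$ is a smooth simple arc emanating from $s_0$. I then choose a Riemann map $\phi : \mathbb{D} \to U$ with $\phi(0) = s_0$ and $\phi(1^-) = s_0 + 1$, and (by composing with a conformal automorphism of $\mathbb{D}$ fixing $0$ and $1$) arrange that $\phi^{-1}(\tet^{-1}(\mathbb{R}_{\leq 0}) \cap U)$ is exactly the segment $(-1, 0] \subset \mathbb{D}$. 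Setting $y(z) = \tet(\phi(z))$ gives $y(0) = 0$, $y(1^-) = 1$, and $\log y$ (principal branch) holomorphic on $\mathbb{D}/(-1, 0]$, so Lemma \ref{lmalog} applies and yields $y : [0,1] \to [0,1]$. Pulling back, $\tet$ is real-valued on the arc $\Gamma := \phi([0,1]) \subset \mathbb{H}$ joining $s_0$ to $s_0+1$.

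Next I invoke Lemma \ref{lmaNR} at every interior point $s^* \in \Gamma$: since $\tet(s^*) \in \mathbb{R}$ and $s^* \in \mathbb{H}$, the horizontal line through $s^*$ has $\tet$ non-real on both sides of $s^*$, so $\Gamma$ cannot agree with a horizontal segment on any subarc and has nowhere-horizontal tangent direction. Iterating the functional equation, $\tet$ must also be real-valued on every translate $\Gamma + k$ for $k \in \mathbb{Z}_{\geq 0}$, yielding a family of pairwise disjoint, nowhere-horizontal real-analytic arcs marching off to infinity with $\tet$ real-valued on each of them.

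Finally, for $k$ large enough, $\Gamma + k$ enters the sector $|\arg s| < \theta$ where $\tet(s)$ is pinned down asymptotically by $\beta_{\lambda(s)}(s)$ with $\lambda(s) = 1/\sqrt{1+s}$. In that regime, an analysis of the level sets $\beta_{\lambda(s)}(s) \in \mathbb{R}$ -- using the exponential-series form $\beta_\lambda(s) = \sum_k a_k e^{k\lambda s}/k!$ of Section \ref{sec2} together with the asymptotic relationship $\log\beta(s+1) - \beta(s) = \mathcal{O}(e^{-\lambda s})$ -- forces any real-value arc to be asymptotically horizontal, contradicting the nowhere-horizontal conclusion above. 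The main obstacle will be making this last step rigorous: precisely ruling out a non-horizontal real-valued arc of $\tet$ in the asymptotic regime, given that $\lambda(s) \to 0$ along the arc. A potentially cleaner alternative is to apply Schwarz reflection across $\Gamma$ to extend $\tet$ symmetrically and then use the conjugation identity $\overline{\tet(\bar{s})} = \tet(s)$ with uniqueness of analytic continuation to force $\Gamma$ to lie on $\mathbb{R}$, directly contradicting $\Im(s_0) > 0$.
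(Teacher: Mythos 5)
Your proposal assembles the right ingredients (contradiction, Lemma \ref{lmalog}, Lemma \ref{lmaNR}), but the step that actually closes the argument is missing. The paper's proof works because the horizontality is never given up: the branch point created at $\tet(s_0-1)=\log\tet(s_0)=\log 0$ carries, by the very construction of $\tet$ (iterated principal-branch logarithms with cuts along horizontal rays, ultimately producing the cut $(-\infty,-2]$), a branch cut running along the horizontal ray $s_0-1-t$, $t\ge 0$. So Lemma \ref{lmalog} can be applied directly in the $s$-coordinate to $y(s)=\tet(s+s_0)$, with the cut being the horizontal segment $(-\delta,0]$, and it yields that $\tet$ is real-valued on a \emph{horizontal} segment $(s_0,s_0+\delta)$ -- which contradicts Lemma \ref{lmaNR} immediately, since that lemma is precisely a statement about horizontal lines $s_0+t$, $t\in\mathbb{R}$. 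Your version throws the horizontality away: after composing with a Riemann map you only conclude that $\tet$ is real on \emph{some} arc $\Gamma$ joining $s_0$ to $s_0+1$, and Lemma \ref{lmaNR} says nothing about non-horizontal arcs; indeed, through any point where $\tet$ takes a real value there genuinely are real-value arcs, so this is no contradiction by itself. You acknowledge this and try to repair it with the translates $\Gamma+k$ and an asymptotic analysis of the level sets of $\beta_{\lambda(s)}(s)$ forcing real-value arcs to be ``asymptotically horizontal,'' but that claim is exactly the substance that would need to be proved, and it is left unproved (it is also delicate, since $\lambda(s)\to 0$ along the arc). The Schwarz-reflection alternative does not rescue it either: one can always reflect locally across an arc on which a holomorphic function is real-valued, and this in no way forces the arc to lie on $\mathbb{R}$.

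There is also a technical flaw in the set-up: once you prescribe $\phi(0)=s_0$ and $\phi(1^-)=s_0+1$, there is no automorphism freedom left (the only automorphism of $\mathbb{D}$ fixing $0$ and $1$ is the identity), and in any case no conformal reparametrization can straighten the preimage of the arc $\tet^{-1}(\mathbb{R}_{\le 0})$ into the segment $(-1,0]$; so the hypothesis of Lemma \ref{lmalog} is not actually arranged by your construction, quite apart from possible extra components of $\tet^{-1}(\mathbb{R}_{\le 0})$ in $U$. The repair is to drop the Riemann map entirely and use the branch structure $\tet$ already has: with $\tet(s_0)=0$ and the logarithmic branch cut at $s_0-1$ taken along the horizontal ray, Lemma \ref{lmalog} applied to $y(s)=\tet(s+s_0)$ on a small disk gives $\tet$ real on $(s_0,s_0+\delta)$, contradicting Lemma \ref{lmaNR} at $s_0$. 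That is precisely the paper's argument, and it needs neither the Riemann map nor any asymptotic level-set analysis.
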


\begin{proof}
By The Non-real Lemma \ref{lmaNR}, we know for any point $\Im(s_0) > 0$ and $\tet(s_0) \in \mathbb{R}$, that $\tet(s_0 + t)$ is not real-valued for $t\in (-\delta,\delta)$. Choose an $s_0$ in which $\tet(s_0) = 1$.  The goal is to show that $\tet(s_0 - 1) = 2\pi i k$ for some $k \neq 0$.

We go by contradiction. Assume that $\tet(s_0 - 1) = 0$, then $\tet(s_0-2)$ is a singularity with a branch-point. This branching process can be done along the line $s_0 - 2 - t$ for $t \in \mathbb{R}^+$. Therefore $\tet(s+s_0-2)$ is holomorphic for $|s| < \delta$ and $s \not\in(-\delta,0]$. And further $y(s)=\tet(s+s_0-1) : \mathbb{D} \to \mathbb{C}$; where $y(1) = 1$ and $y(0) = 0$. 

And, we've used the principal branch of the logarithm to define $\log y(s) = \tet(s+s_0-2)$. Therefore $y(s) : [0,1] \to [0,1]$. Therefore the function $\tet(s+s_0-1)$ must be real-valued for $s \in (0,\delta)$, contradicting The Non-real Lemma \ref{lmaNR}.
\end{proof}

And with this we have constructed a tetration function holomorphic in the upper-half plane and real-valued analytic on the real-line. Therefore,

\begin{theorem}[The Tetration Theorem]\label{thmTET}
The function $\tet$ is holomorphic on $\mathbb{C}/(-\infty,-2]$, satisfies $\tet(0) = 1$ and,

\[
\tet(s+1) = e^{\tet(s)}\\
\]

And is given by the equation, for some $x_0 \in \mathbb{R}$,

\[
\tet(s) = \lim_{n\to\infty} \log^{\circ n} \beta(s+x_0+n)\\
\]

Where,

\[
\beta(s) = \OmSum_{j=1}^\infty \frac{e^z}{e^{\frac{j-s}{\sqrt{1+s}}}+1}\,\bullet z\\
\]
\end{theorem}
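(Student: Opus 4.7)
The plan is to consolidate the results of Sections \ref{sec5}--\ref{sec7} into the single combined statement. Begin by taking $F(s) = \lim_{n\to\infty} \log^{\circ n}\beta(s+n)$, which by The Pasted Together Theorem \ref{thmPST} is holomorphic on the sector $|\arg(s)| < \theta$ and satisfies the conjugation symmetry $F(\overline{s}) = \overline{F(s)}$. This last identity is inherited from each convergent $\log^{\circ n}\beta(s+n)$, which one sees directly from the infinite composition defining $\beta$ together with the fact that $\lambda(s) = 1/\sqrt{1+s}$ is real on $\mathbb{R}^+$. By construction $F(s+1) = e^{F(s)}$ on the sector, since the tail errors $\tau^n_{\lambda(s+n)}(s)$ cancel across the shift in the limit.

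Next, I would extend $F$ backwards by iterating the functional equation. For $s$ outside the sector, pick $n$ large enough that $s + n$ lies in the sector and set $F(s) = \log^{\circ n} F(s+n)$ using the principal branch of $\log$. This produces a meromorphic extension whose only possible branch points occur at those $s$ for which some forward iterate $F(s+k)$ lands in $(-\infty,0]$. By The Non-zero Theorem \ref{thmNZ}, $F$ has no zeros in $\mathbb{H}$; by conjugation symmetry the same holds in the lower half plane. Consequently every candidate branch point must lie on the real axis.

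On $\mathbb{R}$, $F$ is real analytic and, by iterating the functional equation, super-exponentially increasing on a right tail, so it hits the value $1$ at some $x_0 \in \mathbb{R}$. Setting $\tet(s) = F(s + x_0)$ yields $\tet(0) = 1$, transfers the functional equation to $\tet(s+1) = e^{\tet(s)}$, and gives $\tet(-1) = \log 1 = 0$ and $\tet(-2) = \log 0 = -\infty$. Backwards iteration from $-2$ then forces the ray $(-\infty,-2]$ to carry the entire chain of branch singularities generated by this initial zero.

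The main obstacle is confirming that this one ray is the \emph{only} obstruction to holomorphy on $\mathbb{C}/(-\infty,-2]$. Theorem \ref{thmNZ} eliminates zero-spawned singularities in $\mathbb{H}$; conjugation symmetry extends this exclusion to the lower half plane; and on $\mathbb{R}$, once $F$ passes $1$ it is strictly increasing and never returns to $0$, so the chosen zero at $x_0 - 1$ is the unique zero of $F$ on $\mathbb{R}$. Pulling every piece together, the singular set is precisely $(-\infty,-2]$, which is what the theorem asserts.
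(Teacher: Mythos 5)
Your proposal is correct and follows essentially the same route as the paper, which states this theorem as a direct consolidation of The Pasted Together Theorem \ref{thmPST}, the real-valued shift by $x_0$, and the singularity analysis of Section \ref{sec7} (The Non-real Lemma \ref{lmaNR} and The Non-zero Theorem \ref{thmNZ}, with conjugation symmetry handling the lower half-plane). Your assembly of these ingredients---sector convergence, backwards extension by logarithms, exclusion of zeros off the real axis, and the unique real zero at $x_0-1$ generating the branch cut $(-\infty,-2]$---matches the paper's intended argument at the same level of detail.
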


\section{Coding and Graphing}\label{secGRPH}
\setcounter{equation}{0}

We're going to spend a fair amount of time discussing manners of programming these various tetration functions; and attaching graphs based on the aforementioned coding method. The author has no efficient way of constructing the Taylor series of any of the tetrations. And so, the coding is largely done by bruteforce. We're going to write the first code dump in Matlab, but the theory extends to similar languages.

The second code dump will be more difficult, and is a hard-coded method for Pari-GP. This will produce much more accurate results, and will graph much more efficiently. Both code dumps are still in beta stages, but produce enough evidence to concur with the results of this paper\\

To begin, we call our function \texttt{beta(s,l,n)=} 

\[
{\displaystyle \beta_\lambda^n(s)=\OmSum_{j=1}^n \frac{e^z}{e^{\lambda(j-s)} + 1}\,\bullet z\Big{|}_{z=0}}
\]

Where $n$ can be thought of as the depth of iteration, and $l$ is the variable $\lambda$. All this is, is a for-loop. We can, somewhat, think of $\beta_\lambda^n \to \beta_\lambda$, as a for-loop where the depth of iteration is infinite. In that we'll write,

\begin{verbatim}
    function f = beta(s,l,n)
        f=0;
        for i = 0:n-1
            f = exp(f)./(1+exp(l*(n-i-s)));
        end
    end
\end{verbatim}

This is a very naive way of constructing the function $\beta_\lambda$, but it works well enough. This is the equivalent of an alternative method; which is to use the Taylor series of $g_\lambda(w) = \beta_\lambda(s)$ where $s \mapsto \log(w)/\lambda$. Where one would compute the Taylor coefficients of $g_\lambda(w)$ about $w=0$ and undo the substitution $s \mapsto \log(w)/\lambda$.

However, the naive way, is much simpler and much more generalizable. It's what we've used for all the graphs we've calculated here. It also more clearly displays the simple recursive nature of $\beta_\lambda$. Which, in many ways, is a self-referential for-loop iterated to infinity; $n\to\infty$. 

Now, when defining $\tau$ as code; the naive way is to fix an index $n$ in $\beta_\lambda$, and create an index $k$ in $\tau$; for its iteration. Mathematically speaking, this would be,

\[
\tau_{\lambda}^{n,k}(s) = \log\left(\beta_\lambda^n(s+1) - \tau_\lambda^{n,k-1}(s+1)\right) - \beta_\lambda^n(s)\\
\]

This can be written as the code:

\begin{verbatim}
    function f = tau(s,l,n,k)
        if k == 1
            f = log(beta(s+1,l,n)) - beta(s,l,n);
            return
        end

        f = log(beta(s+1,l,n) + tau(s+1,l,n,k-1)) - beta(s,n);
    end
\end{verbatim}

But, it equates, no less, to the function,

\[
\tau_\lambda^{n,k} = \log^{\circ k}\beta_\lambda^n(s+k) - \beta_\lambda^n(s)\\
\]

Which, as you may suspect; the function,

\[
\beta_\lambda^n(s+k) \to \infty\,\,\text{as}\,\,k\to\infty\\
\]

However, this tends to infinity too fast. If you try and graph this for large iterations, too many overflow errors happen in the circuit that everything over flows. Figure \ref{fig3-D} and \ref{fig3-D2} display clear anomalies where we've overflowed somewhere in the process. But also, they display the uniformity away from the singularities; where all is good. The points of divergence in the program do not equate to divergence in the math. The points of divergence in the program are just overflow errors. We get a short circuit.\\

\begin{figure}
    \centering
    \includegraphics[scale=0.4]{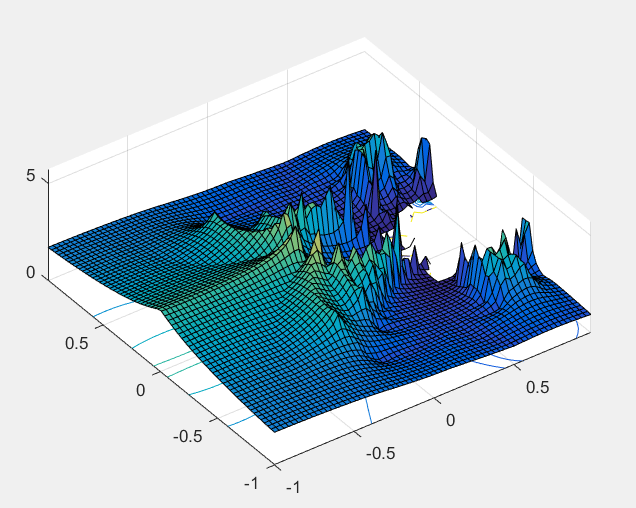}
    \caption{\texttt{y=beta(s,log(2),10) + tau(s,log(2),10,5)}; and we graphed \texttt{abs(y)} over the box $|\Re s|,|\Im s|\le1$.}
    \label{fig3-D}
\end{figure}

\begin{figure}
    \centering
    \includegraphics[scale=0.4]{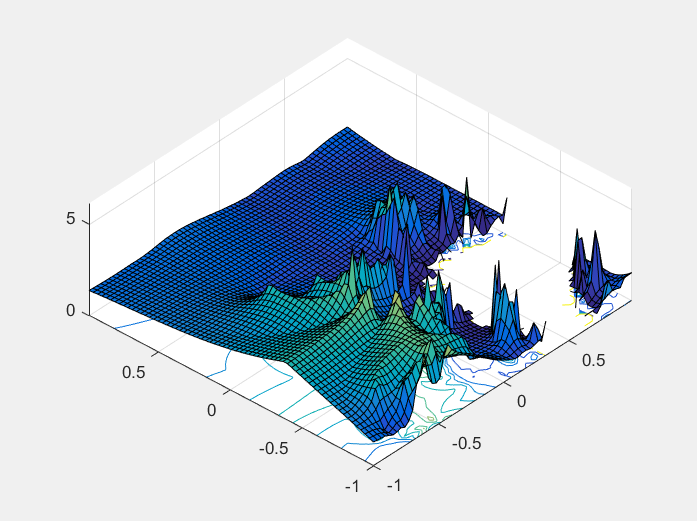}
    \caption{\texttt{y=beta(s,0.5+1i,10) + tau(s,0.5+1i,10,5)}; and we graphed \texttt{abs(y)} over the box $|\Re s|,|\Im s|\le1$.}
    \label{fig3-D2}
\end{figure}
The answer of which is to iterate while you iterate. We want to talk about the sequence of functions:

\begin{verbatim}
    tau_k(s,l,k) = tau(s,l,k,k)
\end{verbatim}

This can be written as the code,

\begin{verbatim}
    function f = tau_k(s,l,k)
        if k == 1
            f = log(beta(s+1,l,2)) - beta(s,l,2);
            return
        end

        f = log(beta(s+1,l,k) + tau(s+1,l,k,k-1)) - beta(s,l,k);
    end
\end{verbatim}

Of which we can see a larger more rapid area of convergence in Figure \ref{fig3-D3}. In the same breath, a more rapid divergence as well. We can continue to massage these things, by alternating method of constructing the iteration. These fairly accurately construct $F_\lambda(s)$; but it says nothing of the actual tetration we want.\\

\begin{figure}
    \centering
    \includegraphics[scale=0.5]{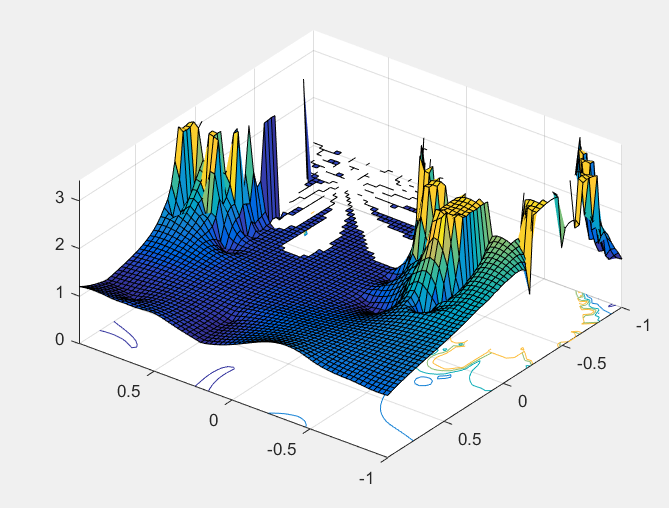}
    \caption{A faster converging view of tetration for $\lambda = 1/2 + i$. This is done with the function \texttt{beta(z,0.5+1i,10) + tau(z,0.5+1i,6)}, over the area $|\Re(s)|,|\Im(s)| \le 1$.}
    \label{fig3-D3}
\end{figure}

\begin{figure}
    \centering
    \includegraphics[scale=0.5]{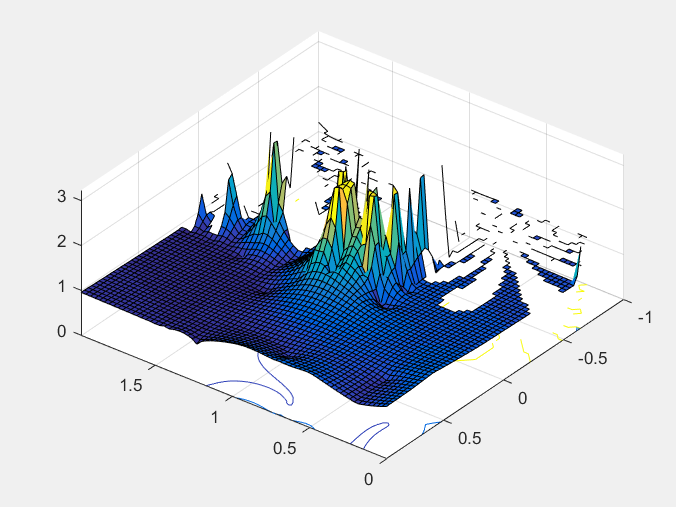}
    \caption{The same function, with an increased real argument. Again, we can observe a short circuit. This time it occurs because our function is beginning to grow super-exponentially.}
    \label{fig:my_label}
\end{figure}

For this, we'll need to introduce the $\beta$ function from The Pasted Together Theorem \ref{thmPST}. With that, call \texttt{beta2(s,n)=}

\[
\beta^n(s) = \OmSum_{j=1}^n \frac{e^z}{e^{\frac{j-s}{\sqrt{1+s}}} + 1}\,\bullet z\Big{|}_{z=0}\\
\]

This can be programmed as,

\begin{verbatim}
    function f = beta2(s,n)
        f=0;
        for i = 0:n-1
            f = exp(f)./(1+exp((n-i-s)./sqrt(1+s)));
        end
    end
\end{verbatim}

\begin{figure}
    \centering
    \includegraphics[scale=0.4]{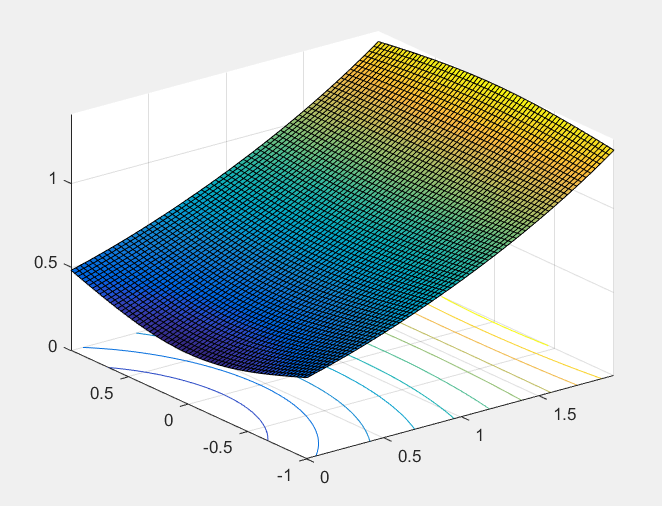}
    \caption{Here is a graph of the function $\beta(s)$--initialized as \texttt{beta2(s,100)} for $0 \le |\Re(s)| \le 2,\,|\Im(s)| \le 1$. We can see its very leveled growth to infinity pretty clearly.}
    \label{figBETAFN}
\end{figure}

Now coding a \texttt{tau2(s,n,k)} function is pretty straight forward.

\begin{verbatim}
    function f = tau2(s,n,k)
        if k == 1
            f = log(beta2(s+1,n)) - beta2(s,n);
            return
        end

        f = log(beta2(s+1,n) + tau2(s+1,n,k-1)) - beta2(s,n);
    end
\end{verbatim}

Trying to graph this will immediately produce overflow errors on the real-line, as we go out; as this grows far too large. But it looks leveled in the complex plane. Now, convergence on the real-line is rather trivial, and it is more important that this object converges in the complex plane. It looks somewhat like Figure \ref{figFINALTET}.

\begin{figure}
    \centering
    \includegraphics[scale=0.4]{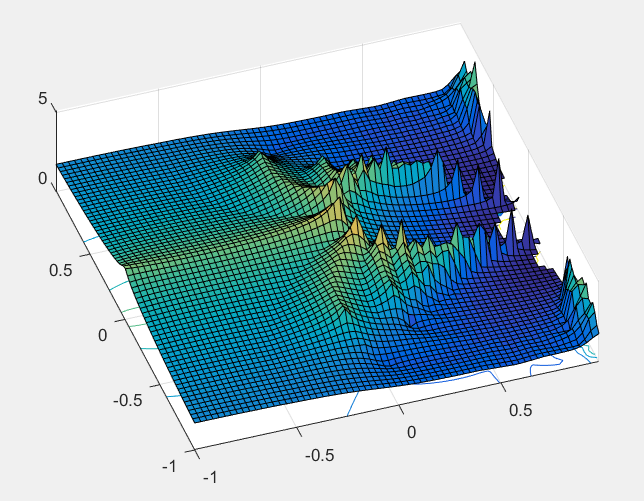}
    \caption{Here is \textit{almost} our tetration function over $|\Im(s)|,|\Re(s)| \le 1$; where we haven't yet shifted to the right domain. It begins to over flow as you increase the real argument, as the iteration takes too many large values of $\beta$; and we short-circuit. The ridge is the branch cut at $(-\infty,-2)$; and the edge of the graph dips to $0$ before shooting off to infinity--causing all the short-circuits.}
    \label{figFINALTET}
\end{figure}

Now, these views are of the iterative procedure. We can clean this code up a lot, by using a step function approach. With that, we attach,

\begin{verbatim}
    function f = TET(z)
        if (-1<real(z)<=0)
            f=beta2(z,8) + tau2(z,8,5);
            return
        end
    
        f = exp(TET(z-1));
    end
\end{verbatim}

And the graph in Figure \ref{figFINALTET2} looks much more accurate as to what our tetration looks like,\\

\begin{figure}
    \centering
    \includegraphics[scale=0.5]{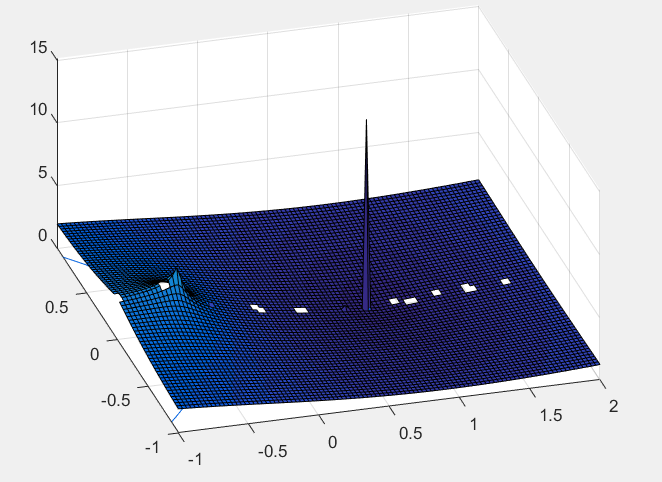}
    \caption{If we clean up the code of our previous iterations; and use the function \texttt{TET}; then this is what you get. This is precisely what our tetration should look like.}
    \label{figFINALTET2}
\end{figure}

Now onto the more difficult problem of programming in Pari-GP. The author will simply reference his GitHub repository for Tetration using Pari-GP. The functions are fairly similar, but slightly more difficult to use. The code dump can be found here \url{https://github.com/JmsNxn92/Recursive_Tetration_PARI}.

Instead, we'll begin by attaching a bunch of graphs which are constructed using these functions. These graphs are created using Graphing.gp in the GitHub Repository; which was not coded by me, but by the user \texttt{mike3} on The Tetration Forum \cite{Trp}. The following Figures \ref{fig:LOG_2} \ref{fig:LOG_POINT_1} \ref{fig:LOG_ONE_I} \ref{fig:ONE_PLUS_I} are all various graphs of $F_\lambda(s)$.\\

\begin{figure}
    \centering
    \includegraphics[scale=0.3]{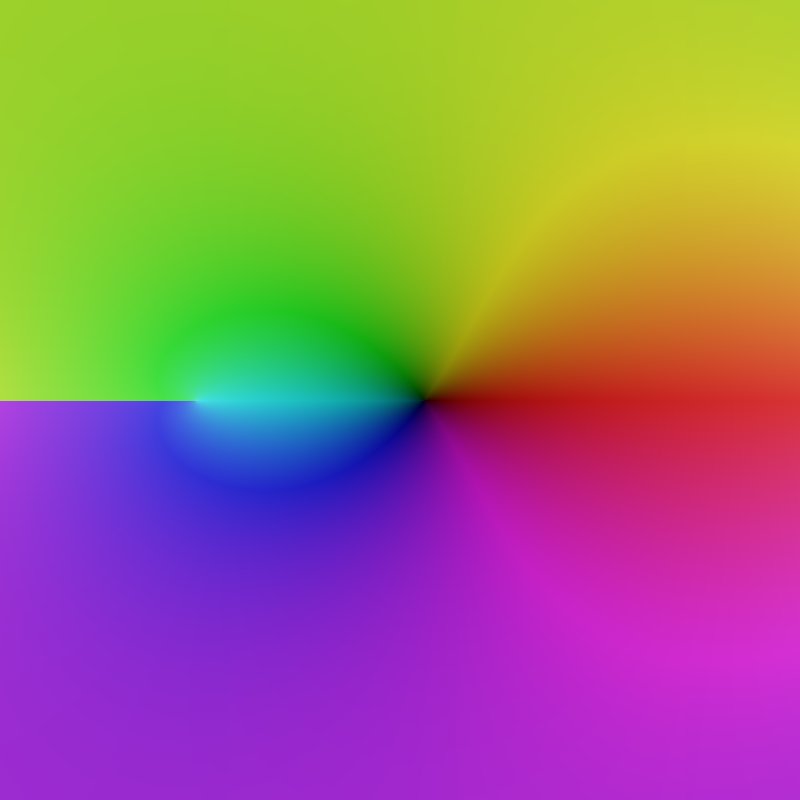}
    \caption{The function $F_{\log(2)}(s)$ for $-1 \le \Re(s) \le 3.5$ and $-2.5 \le \Im(s) \le 2.5$.}
    \label{fig:LOG_2}
\end{figure}

\begin{figure}
    \centering
    \includegraphics[scale=0.3]{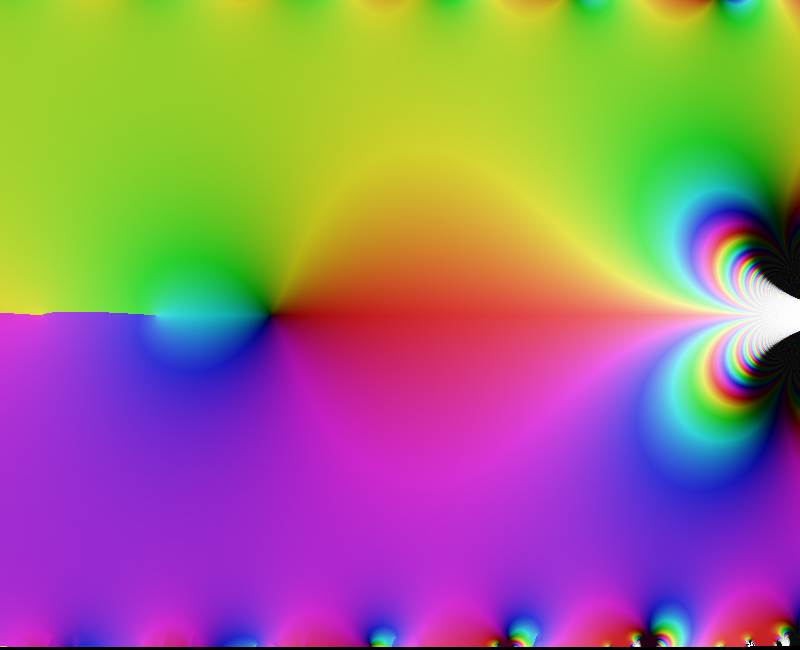}
    \caption{The function $F_{1+0.1i}(s)$ for $-1 \le \Re(s) \le 6$ and $3.2 \le \Im(s) \le 9$.}
    \label{fig:LOG_POINT_1}
\end{figure}

\begin{figure}
    \centering
    \includegraphics[scale=0.6]{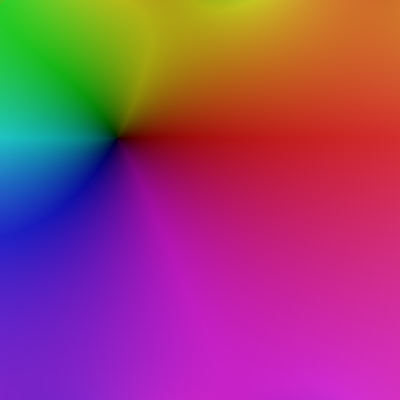}
    \caption{The function $F_{\log(2)-i}(s)$ for $0 \le \Re(s) \le 2$ and $-1 \le \Im(s) \le 1$.}
    \label{fig:LOG_ONE_I}
\end{figure}

\begin{figure}
    \centering
    \includegraphics[scale=0.6]{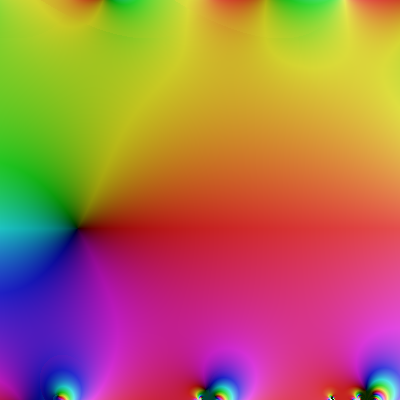}
    \caption{The function $F_{1+i}(s)$ for $0 \le \Re(s) \le 3$ and $-1.5 \le \Im(s) \le 1.5$.}
    \label{fig:ONE_PLUS_I}
\end{figure}

And attached further are various graphs for the function $\tet$.

\begin{figure}
    \centering
    \includegraphics[scale=0.4]{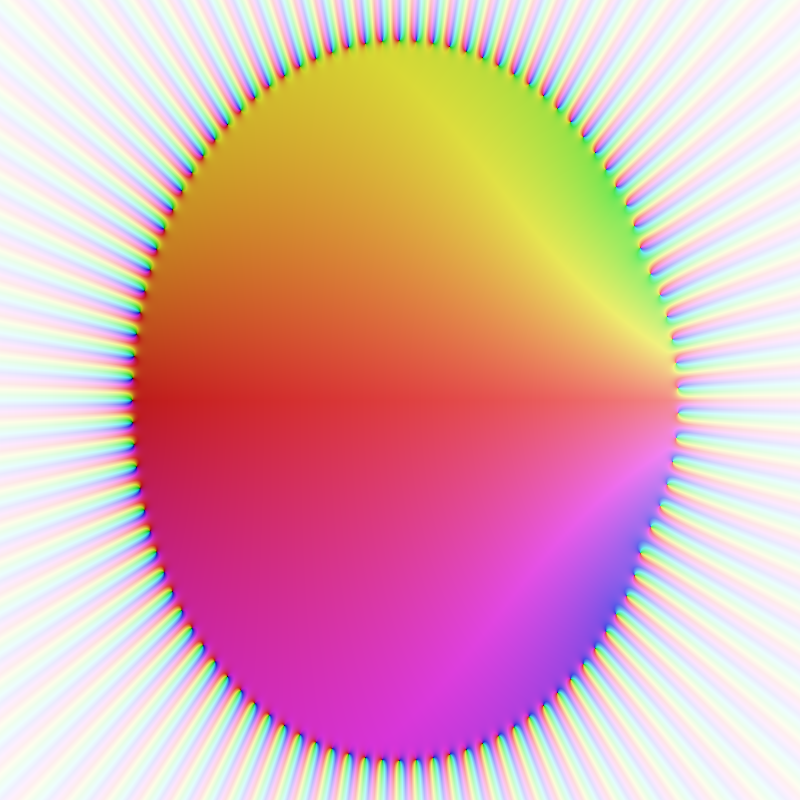}
    \caption{This is the Taylor series of $\tet(z)$ about $z = 1$ for 100 terms and 100 precision.}
    \label{fig:TAYLOR_TET}
\end{figure}

\begin{figure}
    \centering
    \includegraphics[scale = 0.4]{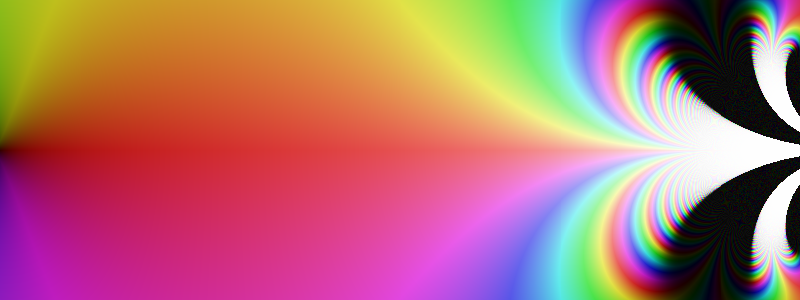}
    \caption{The function $\tet$ for $-1\le \Re(z) \le 4$ and $|\Im(z)| \le 0.8$.}
    \label{fig:TET_REAL}
\end{figure}

\section{Additional properties of $\tet$}\label{sec8}
\setcounter{equation}{0}

In this section we'll list some properties of $\tet$ which are extra to the general theory. These are nice things we can say about our solution. These are largely properties which are inherited from the family $\beta_\lambda$. Underlining how they are inherited is the important part. 

We start with our function $\beta(s)$ which we write as,

\[
\beta(s) = \OmSum_{j=1}^\infty \frac{e^z}{e^{\frac{j-s}{\sqrt{1+s}}}+1}\,\bullet z\\
\]

In which,

\[
\log\beta(s+1) - \beta(s) = \mathcal{O}(|s|^{-1/2})\\
\]

To intuitively see this, observe if we have a sequence of functions $h_j(s) = 1+\mathcal{O}(s^{-1/2})$, and we compose them to get the function,

\[
H(s) = \OmSum_{j=1}^\infty h_j(s)e^{z}\,\bullet z\\
\]

Then,

\[
\frac{H(s+1)}{e^{H(s)}} = h_1(s+1) e^{...}\\
\]

In our particular case, we know the exponent disappears, and $h_1(s+1) = 1+\mathcal{O}(|s|^{-1/2})$. Therefore the logarithm will behave as expected. This is a quick asymptotic, that the author hasn't used, and doesn't really plan on using; but it describes the shape of these things. Continuing this thread of discourse,

\[
\tet(s) - \beta(s+x_0) = \mathcal{O}(|s|^{-1/2})\,\,\text{as}\,\,|s|\to\infty\,\,\text{while}\,\,|\arg(s)| < \theta\\
\]

And so we can expect this with higher-order derivatives too, as the convergence is uniform for $|s| \to \infty$ while $|\arg(s)| < \theta$. Now differentiating $\beta$ is fairly easy; especially if we view this as a manner of computing Taylor coefficients of $g(w)$. We'll skip a few steps here, but the algebraic relationship as $\Re(s) \to \infty$,

\[
\beta'(s+1) \sim \frac{\beta'(s)e^{\beta(s)}}{e^{\frac{-s}{\sqrt{1+s}}}+1} +\frac{e^{\beta(s)}e^{\frac{-s}{ \sqrt{1+s}}}}{\left(e^{\frac{-s}{\sqrt{1+s}}}+1\right)^2} \left( \frac{d}{ds}\frac{-s}{\sqrt{1+s}} \right)
\]

Means that,

\[
\beta'(s) \sim \OmSum_{j=1}^\infty \frac{ze^{\beta(s-j)}}{e^{\frac{j-s}{\sqrt{1+s}}}+1} +\frac{e^{\beta(s-j)}e^{\frac{j-s}{ \sqrt{1+s}}}}{\left(e^{\frac{j-s}{\sqrt{1+s}}}+1\right)^2} \left( \frac{d}{ds}\frac{j-s}{\sqrt{1+s}} \right)\,\bullet z\\
\]

Which is eventually non-zero for large enough $s$. As such, we can expect that,

\[
\tet'(s) \neq 0\,\,\text{for}\,\,|s| > R\\
\]

And hereupon, since $\tet'(s) \neq 0$ we can derive that $\tet'(s-1) \neq 0$. Where,

\[
\tet'(s-1) = \frac{\tet'(s)}{\tet(s)} \neq 0
\]

Which implies that $\tet'(s) \neq 0$ everywhere $\tet(s) \neq 0$ which excludes the point $-1$ but at $-1$ we know that $\tet(-2)$ is a singularity, so this isn't a problem. Which is something really advantangeous to know, but to most studies of tetration, is rather apparent. This implies, yet again, that $\tet'(x) > 0$ for all $x \in (-2,\infty)$. This means that $\tet$ is a bijection of $(-2,\infty) \to \mathbb{R}$.

We can have a similar discussion with higher-order derivatives as well. It doesn't work out as nice, but is still worth while. Now, again we know that higher order derivatives of $\beta$ are also non-zero. As such, we get that,

\[
\tet^{(n)}(s) \neq 0\,\,\text{for}\,\,|s| > R_n\\
\]

This matters particularly for the real-line. Where it says that for large enough $X_n$, then for all $x>X_n$ we know that $\tet^{(n)}(x) > 0$. This tells us, eventually, each of our derivatives will be monotone. This is a slightly weaker criterion than all of its derivatives being monotone.\\

We'd like to take a quick moment to discuss the inverse function $\slog$ of $\tet$. We specifically refer to the $\slog$ function which takes $\mathbb{R} \to (-2,\infty)$ bijectively. This function will be analytic by the implicit function theorem.

Take $\mathcal{N}$ a neighborhood of zero in which $\slog$ is holomorphic. Then,

\[
\slog(e^z) = \slog(z) + 1\\
\]

This allows us to analytically continue $\slog$ to the set $\mathcal{S}$ in which,

\[
\mathcal{S} = \bigcup_{n=0}^\infty \exp^{\circ n}(\mathcal{N})\\
\]

Now, the orbits of the exponential map on an arbitrary neighborhood are dense in the complex plane. Which is the equivalent statement that the Julia set of $\exp$ is all of $\mathbb{C}$ (Again, we cite \cite{Mil,Lyu,Ree}). This amounts to $\overline{\mathcal{S}} = \mathbb{C}$. As such, we know that $\slog$ is holmorphic almost everywhere in $\mathbb{C}$; upto a measure zero set in $\mathbb{C}$.

This gives us a clear language that,

\[
\exp^{\circ s}(z) = \tet(s+\slog(z))\\
\]

Is holomorphic on a domain $\mathbb{P}$ in which $(s,z) \in \mathbb{P}$ and $\mathbb{C}^2/\mathbb{P}$ is a measure-zero set in $\mathbb{C}^2$. This function satisfies the functional equation,

\[
\exp^{\circ s}(\exp^{\circ s'}(z)) = \exp^{\circ s+s'}(z)\\
\]

For appropriately chosen $s$ and $s'$. This constructs what we'd think of as an appropriate fractional iteration of exponentiation; which satisfies the exponent law and takes real-values to real-values. Upon which the identity value $z \mapsto z$ is given at $s=0$ and $\tet(s)$ given at $z=1$.

If we fix $z$; this produces a holomorphic function in $s$ excepting branch cuts; and vice versa. Where the restriction $s,z\in\mathbb{R}^+$ implies $\exp^{\circ s}(z) \in \mathbb{R}^+$. This produces a family of functions ripe to construct pentation...

\section{In Conclusion}

To conclude this paper we broach the idea of doing this for more exotic functions. We ask if for other transcendental functions $h(z) : \mathbb{C} \to \mathbb{C}$, the asymptotic approach works to construct a super-function $H(z)$ such that $h(H(z)) = H(z+1)$; so $H$ satisfies the inverse Abel equation. Constructing an arbitrary function,

\[
\rho_\lambda(s) = \OmSum_{j=1}^\infty \frac{h(z)}{e^{\lambda(j-s)} + 1}\,\bullet z\\
\]

Which satisfies,

\[
\rho_\lambda(s+1) = \frac{h(\rho_\lambda(s))}{e^{-\lambda s} + 1}\\
\]

is not a difficult task, if the domains of $h$ are well behaved. But pulling back with iterates $h^{\circ -n}$ is a very careful procedure. Upon which, we were lucky with $e^z$ because $\log$ is a well behaved inverse. And despite the rapid growth of $e^z$, we were able to do this; where rapid growth is actually very beneficial. In essence, this method is more effective for rapid growing functions than it is for slowly growing functions.

Least of all, with these functions $\rho_\lambda$ we can describe asymptotically what $H$ should look like. Wherein, the equation,

\[
h^{-1}(\rho_\lambda(s+1)) - \rho_\lambda(s) = \mathcal{O}(e^{-\lambda s})\,\,\text{as}\,\,|s| \to \infty\,\,\Re(\lambda s) > 0\\
\]

Is certainly viable (so long as we have a decently well behaved function $h^{-1}$ at $\infty$). But without a decently behaved inverse $h^{-1}$, the most we'd be able to say is that $\rho_\lambda$ is a solution to the asymptotic inverse Abel equation--expressing the same thing but in a more implicit manner. The equation above being the frank way.

The author foresees no problem in utilizing this asymptotic method for $h(z) = b^z$ for $b > e^{1/e}$, where the iterates of $h$ are unbounded here. He imagines this would follow little differently than the case for $b=e$; subtracting minor details. Specifically, we would need a proof that $\rho_\lambda(s) \to \infty$ as $\Re(s) \to \infty$; in the same way we had $\beta_\lambda(s) \to \infty$. The complex plane $e^{\omega z}$ for $\omega \in \mathbb{C}$ is a different story though--it may be tractable, as long as its iterates are unbounded; though the complexity of the logarithms sounds like a serious headache.

The author also knows no way of understanding the dynamics of $\tet(s-n)$ for $\Im(s) > 0$. This equates to the repeated application of the logarithm; for varying branches of $\log$. The author is somewhat convinced this tetration $\tet \neq \text{tet}_K$, Kneser's tetration. Where in this regard, he expects the iterated $\log$'s on $\tet(s)$ may converge to varying fixed points, or diverge like the Julia set of the $\log$ map; and $\lim_{|s| \to \infty} \text{tet}_K(s) = L$ for $\pi/2 \le \arg(s) < \pi$. This is to say, Kneser's tetration is normal in the upper left half plane; the function $\tet$ is not.

With this, I conjecture that $\lim_{n\to\infty} \tet(s-n) \to L_{s},\,\infty$; where $L_s$ is a fixed point $e^L = L$. And $\infty$ means that $\tet(s) \in \mathcal{J}$ for $\mathcal{J}$ the Julia set of $\log$; upon which repeated applications don't converge. Infer, we interpret $\infty$ as non-normality, and $L_{s}$ as normality, and convergence towards a fixed point.

I, further, do not expect this solution to be Kneser's tetration because the behaviour as $\Im(s) = t \to \infty$ of $\tet$ should be $\infty$; as it should look like $\beta(it) + \mathcal{O}(e^{-\frac{it}{\sqrt{1+it}}})$, which $\beta(it)$ should tend to infinity (again the author isn't certain here, it just looks like it might work this way).\\

We thank the reader for their time, and their willingness to get to the bottom of this paper.

\section*{Appendix}\label{app}

We've attached here a proof of Theorem \ref{thmA}.

\begin{theorem}
Let $\{H_j(s,z)\}_{j=1}^\infty$ be a sequence of holomorphic functions such that $H_j(s,z) : \mathcal{S} \times \mathcal{G} \to \mathcal{G}$ where $\mathcal{S}$ and $\mathcal{G}$ are domains in $\mathbb{C}$. Suppose there exists some $A \in \mathcal{G}$, such for all compact sets $\mathcal{N}\subset\mathcal{G}$, the following sum converges,

\[
\sum_{j=1}^\infty ||H_j(s,z) - A||_{z \in \mathcal{N},s \in \mathcal{S}} = \sum_{j=1}^\infty \sup_{z \in \mathcal{N},s \in \mathcal{S}}|H_j(s,z) - A| < \infty
\]

Then the expression,

\[
H(s) = \lim_{n\to\infty}\OmSum_{j=1}^n H_j(s,z)\bullet z = \lim_{n\to\infty} H_1(s,H_2(s,...H_n(s,z)))\\
\]

Converges uniformly for $s \in \mathcal{S}$ and $z \in \mathcal{N}$ as $n\to\infty$ to $H$, a holomorphic function in $s\in\mathcal{S}$, constant in $z$.
\end{theorem}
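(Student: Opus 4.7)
The plan is to isolate a ``tail'' of the composition that lives in a small ball around $A$, show the tail is uniformly Cauchy in $n$ via a Lipschitz--telescoping argument, and then propagate this to $\Phi_n(s,z) := H_1(s, H_2(s, \ldots, H_n(s,z)))$ by composing with the fixed finite ``head.'' The key observation is that the hypothesis $\sum_j \|H_j - A\|_{\mathcal{S}\times\mathcal{N}'} < \infty$, fed into Cauchy's estimate, upgrades the $L^\infty$-smallness of $H_j - A$ to a uniform Lipschitz smallness in $z$ --- this is what makes the composition contractive.

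First I would fix $r > 0$ with $\overline{B(A,2r)} \subset \mathcal{G}$ and apply the hypothesis to the compact set $\mathcal{N}' := \mathcal{N} \cup \overline{B(A,2r)}$, obtaining summable $\epsilon_j := \|H_j - A\|_{\mathcal{S}\times\mathcal{N}'}$. Choosing $n_0$ so that $\epsilon_j < r/2$ for $j \ge n_0$, each such $H_j$ sends $\mathcal{S}\times\mathcal{N}'$ into $\overline{B(A,r)}$, so the tail composition
\[
T_{k,n}(s,z) := H_k(s, H_{k+1}(s, \ldots, H_n(s,z)))
\]
maps $\mathcal{S}\times\mathcal{N}$ into $\overline{B(A,r)}$ for every $n \ge k \ge n_0$. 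Cauchy's estimate applied to $H_j - A$ on $\overline{B(A,2r)}$ gives $|\partial_z H_j(s,w)| \le \epsilon_j/r$ on $\overline{B(A,r)}$, so $H_j(s,\cdot)$ is Lipschitz there with constant $L_j := \epsilon_j/r$, and (enlarging $n_0$ if needed) $L_j \le q < 1$ for $j \ge n_0$.

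With these in hand, I would telescope $T_{n_0,n+1} - T_{n_0,n}$ by peeling off one Lipschitz factor at a time --- valid since both arguments of each $H_j$ land in $\overline{B(A,r)}$ whenever $j \ge n_0$. At the innermost level the two values $H_n(s, H_{n+1}(s,z))$ and $H_n(s,z)$ both lie within $\epsilon_n$ of $A$, so their difference is at most $2\epsilon_n$, giving
\[
|T_{n_0,n+1}(s,z) - T_{n_0,n}(s,z)| \le 2\epsilon_n\,q^{n-n_0}
\]
uniformly in $(s,z) \in \mathcal{S}\times\mathcal{N}$. This is summable, so $T_{n_0,n} \to T_{n_0,\infty}$ uniformly; composing with the fixed finite head $T_{1,n_0-1}(s,\cdot)$ (Lipschitz on $\overline{B(A,r)}$ with a bound independent of $s$, again by Cauchy on each of the finitely many $H_j$) then yields uniform convergence of $\Phi_n \to \Phi$. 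The limit $\Phi$ is holomorphic in $s$ as a uniform limit of holomorphic functions, and the same telescoping applied to $T_{n_0,n}(s,z) - T_{n_0,n}(s,z')$ gives a bound $\le 2r\,q^{n-n_0}$, forcing $\Phi$ to be independent of $z$.

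The main obstacle is the bookkeeping: every estimate must remain uniform over all of $s \in \mathcal{S}$, which is possible only because the hypothesis takes the supremum over all of $\mathcal{S}$ (not merely over compact subsets), so that the Cauchy-derived Lipschitz constants $\epsilon_j/r$ are automatically uniform in $s$. The one genuinely subtle point is that each recursive Lipschitz bound is valid only when both points being compared lie inside $\overline{B(A,r)}$; ensuring this at every stage is what forces the tail index $n_0$ to be chosen before the telescoping begins, after which the argument collapses to a geometric series.
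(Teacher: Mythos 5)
Your proposal is correct, but it runs along a genuinely different track than the paper's appendix proof. The paper first shows the tail of the composition stays in an $\epsilon$-ball around $A$, then treats the partial composites $\phi_m(s,z)$ as a normal family, extracts uniform derivative bounds $\|\tfrac{d^k}{dz^k}\phi_m\| \le M\,k!\,L^k$ by a Cauchy integral, and estimates the increment $\phi_{m+1}(s,A)-\phi_m(s,A) = \phi_m(s,H_{m+1}(s,A))-\phi_m(s,A)$ by Taylor's theorem, getting a bound $C\,\|H_{m+1}(s,A)-A\|$ whose summability (the hypothesis, used in full) gives a uniform Cauchy sequence; independence of $z$ is then argued separately by re-composing with the near-constant tail. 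You instead apply the Cauchy estimate to the \emph{individual} maps $H_j$ near $A$, turning sup-norm smallness into Lipschitz constants $L_j=\epsilon_j/r\le q<1$, and telescope the difference through the chain of contractions, with the innermost difference crudely bounded by $2\epsilon_n$; the same contraction estimate disposes of $z$-independence at no extra cost. What each buys: your route gives geometric convergence of the tail and, notably, only uses $\epsilon_j\to 0$ (plus finiteness of each $\epsilon_j$) rather than summability, so it proves a formally stronger statement; the paper's route never needs the tail maps to be strict contractions and uses the summable series directly as the Cauchy majorant. One caveat you share with the paper rather than introduce: your claim that the finite head $T_{1,n_0-1}(s,\cdot)$ is Lipschitz uniformly in $s$ (like the paper's uniform bound $M$ on $\phi_m$) tacitly assumes the intermediate images $H_j(s,\cdot)\circ\cdots$ of the ball $\overline{B(A,r)}$ stay inside compact subsets of $\mathcal{G}$, so that the hypothesis and Cauchy's estimate can be invoked there; this is automatic when $\mathcal{G}=\mathbb{C}$ (the case actually used for $\beta_\lambda$), but for a general domain $\mathcal{G}$ with values allowed to approach $\partial\mathcal{G}$ it is an unstated assumption in both arguments, so it does not count against you.
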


\begin{proof}

The first thing we show is for all $\epsilon > 0$, there exists some $N$, such when $m \ge n  > N$,

\[
|\OmSum_{j=n}^{m} H_j(s,z)\bullet z - A| < \epsilon
\]

For $z$ in $\mathcal{N}\subset \mathcal{G}$ (where $A$ is in the open component of $\mathcal{N}$), and $s\in\mathcal{S}$. This then implies as we let $m\to\infty$, the tail of the infinite composition stays bounded. Forthwith, the infinite composition becomes a normal family, and proving convergence becomes simpler. We provide a quick proof of this inequality.\\

Set $||H_j (s, z)-A||_{\mathcal{S}, \mathcal{N}} = \rho_j$. Pick $\epsilon > 0$, and choose $N$ large enough so when $n > N$,

\[
\rho_n < \epsilon
\]

Denote: $\phi_{nm}(s, z) =\OmSum_{j=n}^m H_j (s, z) \bullet z = H_n(s, H_{n+1}(s, ...H_m(s, z)))$. We go by induction on the difference $m-n = k$. When $k=0$ then,

\[
||\phi_{nn}(s,z) - A||_{\mathcal{S},\mathcal{N}} = ||H_n(s,z)-A||_{\mathcal{S},\mathcal{N}}= \rho_n < \epsilon
\]

Assume the result holds for $m-n < k$, we show it holds for $m-n = k$. Observe,

\begin{eqnarray*}
||\phi_{nm}(s,z)-A||_{\mathcal{S},\mathcal{N}} &=& ||H_n(s,\phi_{(n+1)m}(s,z)) - A||_{\mathcal{S},\mathcal{N}}\\
&\le& ||H_n(s,z)-A||_{\mathcal{S},\mathcal{N}}\\
&=& \rho_n < \epsilon\\
\end{eqnarray*}

Which follows by the induction hypothesis because $\phi_{(n+1)m}(s,z) \subset \mathcal{N}$--it's in a neighborhood of $A$ which is in $\mathcal{N}$. That is $m-n-1 < k$.

The next step is to observe that $\OmSum_{j=1}^m H_j(s,z)$ is a normal family as $m\to\infty$, for $z \in \mathcal{N}$ and $s \in \mathcal{S}$. This follows because the tail of this composition is bounded. We can say $||\OmSum_{j=1}^m H_j(s,z)||_{\mathcal{S},\mathcal{N}} < M$ for all $m$.

Since $\phi_m(s,z) = \OmSum_{j=1}^m H_j(s,z)\bullet z$ are a normal family for all compact sets $\mathcal{N}\subset \mathcal{G}$; there is some constant $M \in \mathbb{R}^+$ and $L \in \mathbb{R}^+$ such,

\[
||\frac{d^k}{dz^k} \phi_m(s,z) ||_{\mathcal{S},\mathcal{N}} \le M \cdot k! \cdot L^k
\]\\

To see this, take $|z-A| < 2\delta$ and observe,

\[
\frac{d^k}{dz^k} \phi_m(s,z) = \frac{k!}{2\pi i}\int_{|\xi - A| = 2\delta} \frac{\phi_m(s,\xi)}{(\xi - z)^{k+1}}\,d\xi\\
\]

So that, taking the supremum norm across $|z-A| \le \delta$

\begin{eqnarray*}
||\frac{d^k}{dz^k} \phi_m(s,z)||_{\mathcal{S},|z-A| \le \delta} &\le& \frac{k!}{2\pi} \int_{|\xi-A| = 2\delta} \frac{||\phi_m(s,\xi)||_{\mathcal{S}}}{|\xi-z|_{|z-A| \le \delta}^{k+1}}\,d\xi\\
&\le& \frac{k!}{2\pi} \int_{|\xi-A| = 2\delta} \frac{M}{\delta^{k+1}}\,d\xi\\
&\le& \frac{2 M k!}{\delta^{k}}\\  
\end{eqnarray*}

Where we've used the bound $|\xi - z| \ge \delta$ when $|\xi - A| = 2\delta$ and $|z-A| \le \delta$. This bound can be derived regardless of $\mathcal{N}$ for varying $M$ and $L$.\\

Secondly, using Taylor's theorem,

\begin{eqnarray*}
\phi_{m+1}(s,z) - \phi_m(s,z) &=& \phi_m(s,H_{m+1}(s,z)) - \phi_m(s,z)\\
&=& \sum_{k=1}^\infty \frac{d^k}{dz^k} \phi_m(s,z) \frac{(H_{m+1}(s,z) - z)^k}{k!}\\
&=& (H_{m+1}(s,z) - z) \sum_{k=1}^\infty \frac{d^k}{dz^k} \phi_m(s,z) \frac{(H_{m+1}(s,z) - z)^{k-1}}{k!}\\
\end{eqnarray*}

So that, setting $z=A$,

\begin{eqnarray*}
||\phi_{m+1}(s,A) - \phi_m(s,A)||_{s \in\mathcal{S}} &\le& ||H_{m+1}(s,A) - A||_{s\in\mathcal{S}} \sum_{k=1}^\infty M L^k ||H_{m+1}(s,A) - A||^{k-1}\\
&\le& ||H_{m+1}(s,A) - A||_{\mathcal{S}} \frac{ML}{1-q}\\
\end{eqnarray*}

For $L||H_{m+1}(s,A) - A||_{\mathcal{S}} \le q <1 $, which is true for large enough $m>N$. Setting $C = \frac{ML}{1-q}$. Applying from here,

\[
||\phi_{m+1}(s,A) - \phi_m(s,A)||_{s \in \mathcal{S}} \le C ||H_{m+1}(s,A) - A||_{s \in \mathcal{S}}\\
\]

This is a convergent series per our assumption. Choose $N$ large enough, so that when $m,n>N$,

\[
\sum_{j=n}^{m-1}||H_{j+1}(s,A) - A||_{s \in \mathcal{S}} < \frac{\epsilon}{C}\\
\]

Then,

\begin{eqnarray*}
||\phi_{m}(s,A) - \phi_n(s,A)||_{s \in \mathcal{S}} &\le& \sum_{j=n}^{m-1} ||\phi_{j+1}(s,A) - \phi_j(s,A)||_{s \in \mathcal{S}}\\
&\le& C\sum_{j=n}^{m-1}||H_{j+1}(s,A) - A||_{s \in \mathcal{S}}\\
&<& \epsilon
\end{eqnarray*}

So we can see $\phi_m(s)$ must be uniformly convergent for $s \in \mathcal{S}$, and therefore defines a holomorphic function $H(s)$ as $m\to\infty$.

This tells us,

\[
H(s) = \OmSum_{j=1}^\infty H_j(s,z)\bullet z \Big{|}_{z=A}\\
\]

Converges and is holomorphic. To show this function equals,

\[
\OmSum_{j=1}^\infty H_j(s,z)\bullet z
\]

For all $z \in \mathcal{G}$; simply notice that,

\[
\OmSum_{j=m}^\infty H_j(s,z)\bullet z
\]

Is arbitrarily close to $A$ as we let $m$ grow (which was shown at the beginning of this proof). Then,

\begin{eqnarray*}
\OmSum_{j=1}^\infty H_j(s,z)\bullet z &=& \OmSum_{j=1}^{m-1} H_j(s,z)\bullet \OmSum_{j=m}^\infty H_j(s,z)\bullet z\\
&=& \lim_{m\to\infty} \OmSum_{j=1}^{m-1} H_j(s,z)\bullet \lim_{m\to\infty} \OmSum_{j=m}^\infty H_j(s,z)\bullet z\\
&=& \OmSum_{j=1}^\infty H_j(s,z)\bullet z\Big{|}_{z=A}\\
\end{eqnarray*}

This concludes our proof.

\end{proof}

\section*{Acknowledgements}

I'd like to thank the community at The Tetration Forum \cite{Trp}. Without this community, I do not think this paper would've come into fruition the manner that it has. In this regard, I would not have been able to spot the error in my initial construction using a function $\phi$ which satisfied a different criterion than the family $\beta_\lambda$. To this, I owe many thanks to Sheldon Levenstein particularly.

I also owe a great deal of thanks to Tom Marcel Raes--who was very persistent in the belief that there may be some function in which the original $\phi$ method would work--which was pushed to its extreme in \cite{Nix2}, but only for real-values. I had significant doubt after the failure of holomorphy with $\phi$, that no function could properly work--produce a holomorphic tetration. But upon the many suggested functions he had, an idea formed--make it solve the tetration equation at $\infty$! Which led me to consider the family of functions $\beta_\lambda$--which are asymptotically tetration.

I'd also like to thank the user MphLee, who by happenstance asked me a bunch of questions, which led myself to consider the logistic map $\dfrac{1}{e^{-\lambda s} + 1}$ as our multiplier rather than $e^s$ (as it was with the $\phi$ method). Whereupon, this was the function which worked most universally for the construction we needed to answer his questions--or at least attempt to do so. I'd also like to thank him for all of the commutative diagrams he made--which encouraged me to use my own. He aided me greatly with many of his funny drawings.

I'd also like to thank Henryk Trappman, for of course curating and creating the forum; and for the disappointment that there were no pictures in the first paper \cite{Nix2}, encouraging me to include graphics. And I'd like to thank the many members of the forum who have helped me understand these problems throughout the course of my mathematical development. Where, there exists posts from many years gone by when I was no more than a teenager.

Regards, James

\end{document}